\documentclass[a4paper, oneside]{article}


\usepackage{amsmath}
\usepackage{amssymb}
\usepackage{amsthm}
\usepackage{appendix}
\usepackage{enumerate}
\usepackage{hyperref}
\usepackage{mathrsfs}
\usepackage{pdfsync}
\usepackage{titlesec}


\textwidth 145mm
\textheight 220mm
\topmargin 5mm
\oddsidemargin 7.5mm
\footskip 15mm	






\titleformat{\section}[hang]
{\bfseries\large}{\thesection.}{0.5em}{}[]
\titlespacing*{\section}{0em}{2em}{1.5em}

\titleformat{\subsection}[runin]
{\bfseries\normalsize}{\thesubsection.}{0em}{\ }[.]


\theoremstyle{plain}
\newtheorem{thm}{Theorem}[section]
\newtheorem{prop}[thm]{Proposition}
\newtheorem{lem}[thm]{Lemma}
\newtheorem{cor}[thm]{Corollary}
\newtheorem*{conj}{Conjecture}

\theoremstyle{definition}
\newtheorem{defn}[thm]{Definition}

\theoremstyle{remark}
\newtheorem{rem}[thm]{Remark}


\numberwithin{equation}{section}


\DeclareMathOperator*{\essinf}{essinf}
\DeclareMathOperator*{\esslim}{esslim}

\DeclareMathOperator{\sgn}{sgn}


\newcommand{\Romannum}[1]{\uppercase\expandafter{\romannumeral#1\relax}}
\newcommand{\romannum}[1]{\romannumeral#1\relax}


\newcommand \ep {\epsilon}
\newcommand \ve {\varepsilon}

\newcommand \vf {\varphi}


\newcommand \bE {\mathbb E}
\newcommand \bN {\mathbb N_+}
\newcommand \bP {\mathbb P}
\newcommand \bQ {\mathbb Q}
\newcommand \bR {\mathbb R}

\newcommand \bZ {\mathbb Z}


\newcommand \cA {\mathcal A}
\newcommand \cC {\mathcal C}
\newcommand \cS {\mathcal S}


\newcommand \fD {\mathfrak D}
\newcommand \fQ {\mathfrak Q}



\newcommand \CS {Cauchy--Schwarz }

\newcommand \Hol {H\"older's }

\def \ss {\mathrm{ss}}
\newcommand \ta {\mathrm{tas}}


\begin{document}

\pagestyle{plain}
\pagenumbering{arabic}
\bibliographystyle{plain}

\title{Hydrodynamics for one-dimensional ASEP\\in contact with a class of reservoirs}
\author{\textsc{Lu XU}\footnote{This work has been funded by the ANR grant MICMOV (ANR-19-CE40-0012) of the French National Research Agency (ANR).}}
\date{}
\maketitle



\begin{abstract}
We study the hydrodynamic behaviour of the asymmetric simple exclusion process on the lattice of size $n$.
In the bulk, the exclusion dynamics performs rightward flux.
At the boundaries, the dynamics is attached to reservoirs.
We investigate two types of reservoirs: (1) the reservoirs that are weakened by $n^\theta$ for some $\theta<0$ and (2) the reservoirs that create particles only at the right boundary and annihilate particles only at the left boundary.
We prove that the spatial density of particles, under the hyperbolic time scale, evolves with the entropy solution to a scalar conservation law on $[0,1]$ with boundary conditions.
The boundary conditions are characterised by the boundary traces \cite{BLN79,Otto96,Vass01} at $x=0$ and $x=1$ which take values from $\{0,1\}$.
\end{abstract}

\bigskip
\noindent\textbf{Keywords.}
Asymmetric simple exclusion process, Slow boundary, Hydrodynamic limit, Entropy solution, Boundary trace.


\bigskip
\noindent\textbf{Data Availability.}
The authors declare that all data supporting this article are available within the article.

\section{Introduction}

The simple exclusion in contact with reservoirs is one of the most studied open interacting particle systems.
It can be viewed as a superposition of nearest-neighbour random walks on the lattice $\{1,\ldots,n\}$, in accordance with the exclusion rule: two particles cannot occupy the same site at the same time.
The dynamics is attached to reservoirs at its boundaries, which means that particles can enter the system at site $1$ with rate $n^\theta\alpha$ and leave with rate $n^\theta\gamma$, while at site $n$, similar behaviour is exhibited with rates $n^\theta\delta$ and $n^\theta\beta$.
Assume that $\alpha$, $\beta$, $\gamma$ and $\delta$ are $L^\infty$ functions of time.
The factor $n^\theta$ regulates the intensity of the contact between the bulk dynamics and the reservoirs.
The total number of particles turns out to be the only quantity that is conserved locally by the dynamics.

The purpose of this article is to investigate the density of particles as $n\to\infty$, namely the hydrodynamic limit.
More precisely, we want to prove that, under proper time scale, the empirical distribution of particles converges to the solution to some partial differential equation.
This equation governs the macroscopic time evolution of the local equilibrium states of the dynamics.

When the jump rate of the random walk is symmetric, boundary-driven heat equation is obtained in the limit under the diffusive time scale.
Depending on the value of $\theta$, the boundary conditions are determined through three phases: Dirichlet type boundaries for $\theta>-1$ (see \cite{ELS90,ELS91,KipnisL99,LMO08} and the references therein), Robin type boundaries for $\theta=-1$, and Neumann type boundaries for $\theta<-1$ (see \cite{BMNS17,FGN19}).

For asymmetric dynamics, the situation differs drastically.
The one-dimensional asymmetric simple exclusion process (ASEP) in contact with reservoirs is first introduced as an intermediate tool for studying the dynamics on the infinite lattice \cite{Liggett75}.
Its non-equilibrium stationary state (NESS) has uniform density determined by the bulk and reservoir rates through three phases: the high-density phase, the low-density phase, and the max-current phase \cite{DEHP93}.
The high-density phase and the low-density phase intersect at a critical line, where the density performs a randomly located shock.
The density solves the variational problem \cite{PopkovS99,Baha12,DMOX22}: the stationary flux is minimised when the density gradient created by the reservoirs has opposite sign to the drift of the asymmetric exclusion (drift up-hill), otherwise it is maximised (drift down-hill).

For ASEP, hydrodynamic limit is proved under the hyperbolic time scale.
The macroscopic time evolution is governed by a nonlinear scalar conservation law \cite{Reza91}.
As this equation does not in general have a unique weak solution, we are forced to consider the physical one obtained through parabolic perturbation, namely the \emph{entropy solution}.
When reservoirs are attached, the entropy solution is constrained by specific boundary conditions \cite{Baha12,Xu21}.
Different from the classical case, these boundary conditions do not fix the boundary values of the solution.
Instead, they impose a set of possible boundary values depending on the given boundary data, see \cite{BLN79,Otto96} and Proposition \ref{prop:ent-sol}.
Hence, the entropy solution exhibits \emph{boundary layers}, a typical discontinuous phenomenon happening in boundary-driven hyperbolic equations.
The characterisation of these layers turns out to be the main issue in proving hydrodynamic limit,  since it restricts us from employing replacement lemma for small macroscopic blocks.
When $\theta>0$, the reservoirs dominate the drift at the boundaries, thus the boundary data are set to be the reversible densities of the reservoirs, see \cite{Xu21} and Theorem \ref{thm:fast}.
The proof in \cite{Xu21} exploits a grading scheme to control the formulation of boundary layers on a mesoscopic level.
In \cite{DMOX22}, the time scales longer than the hyperbolic one are considered, under which the density converges to the quasi-stationary solution, that is, the stationary solution associated to dynamical boundary data.
Their result generalises the usual hydrostatic scenario, where only stationary dynamics is evolved.
When $\theta=0$, the hydrodynamic limit is proved in \cite{Baha12} for a special choice of $(\alpha,\beta,\gamma,\delta)$ which is constant in time and is dependent on the jump rates of the bulk random walk.
This choice approximates optimally the infinite dynamics \cite{Liggett75}, thus an effective coupling method is applicable in proving the boundary conditions, see Remark \ref{rem:liggett}.

In the present article, we focus on the regime $\theta\le0$.
We consider the ASEP with rightward flux and treat two types of boundary reservoirs:
\begin{itemize}
\item[(1)] $\theta<0$, i.e., the reservoirs are attached weakly to the bulk dynamics;
\item[(2)] $\theta=0$, the rates of enter at site $1$ and leave at site $n$ are $0$, i.e., particles are randomly destructed  on the left side and created on the right side.
\end{itemize}
Observe that the case (2) is not covered by the model studied in \cite{Baha12}.
In the main result, Theorem \ref{thm:slow}, we prove that the hydrodynamic equation is given by the scalar conservation law with boundary conditions formally read $u|_{x=0}=0$, $u|_{x=1}=1$.
Rigorously speaking, these conditions mean that the solution has $L^\infty$ boundary traces taking values from $\{0,1\}$, see Remark \ref{rem:slow}.
In both cases, the hydrodynamic equation conserves the total mass, see \eqref{eq:cl-amount}.
We also study in Theorem \ref{thm:viscous} the vanishing viscosity limit of the diffusive hydrodynamic equation for the corresponding weakly asymmetric simple exclusion for all values of $\theta$.
Based on this result, a conjecture on the hydrodynamic limit for $\theta=0$ with general reservoir rates is made in Section \ref{subsec:viscous}.

The proofs of the two types have almost no difference.
The bulk equation is proved in Section \ref{sec:local-equil} and \ref{sec:com-com} employing the logarithmic Sobolev inequality \cite{Yau97} and the compensated compactness argument \cite{Fritz04,FritzT04}, similarly to \cite{Xu21}.
The boundary conditions are formulated through a strategy different from the existing works \cite{Baha12,Xu21}.
We apply Vasseur's result on the existence of strong boundary traces \cite{Vass01} to reduce the boundary conditions to integral formulas.
These formulas are then verified in Section \ref{sec:ent-sol}, based on the fact that in both cases, only vanishing macroscopic current can be observed near the boundaries.

\section{Model and main results}

\subsection{ASEP with open boundaries}
\label{subsec:asep}

For $n\ge2$, let $\Omega_n=\{0,1\}^n$ be the configuration space.
For $\eta=(\eta_1,\ldots,\eta_n)\in\Omega_n$, define $\eta^{i,i+1}$ and $\eta^i $ by
\begin{align}
  (\eta^{i,i+1})_{i'}=
  \begin{cases}
    \eta_{i'+1}, &i'=i,\\
    \eta_{i'-1}, &i'=i+1,\\
    \eta_{i'}, &\text{otherwise},
  \end{cases}
  \quad
  (\eta^i)_{i'}=
  \begin{cases}
    1-\eta_{i'}, &i'=i,\\
    \eta_{i'}, &i'\not=i.
  \end{cases}
\end{align}
Let $L_\ta$ and $L_\ss$ generate respectively the totally asymmetric simple exclusion and symmetric simple exclusion on $\Omega_n$:
\begin{equation}
  \begin{aligned}
    L_\ta f(\eta) &:= \sum_{i=1}^{n-1} \eta_i(1-\eta_{i+1})\big[f(\eta^{i,i+1})-f(\eta)\big],\\
    L_\ss f(\eta) &:= \sum_{i=1}^{n-1} \big[f(\eta^{i,i+1})-f(\eta)\big].
  \end{aligned}
\end{equation}
Given nonnegative functions $\alpha$, $\beta$, $\gamma$, $\delta \in L^\infty(\bR_+)$, define
\begin{equation}
  \begin{aligned}
    L_{-,t}f(\eta) &:= \big[\alpha(t)(1-\eta_1)+\gamma(t)\eta_1\big]\big[f(\eta^1)-f(\eta)\big],\\
    L_{+,t}f(\eta) &:= \big[\delta(t)(1-\eta_n)+\beta(t)\eta_n\big]\big[f(\eta^n)-f(\eta)\big].
  \end{aligned}
\end{equation}
Let $\{\mu_{n,0}; n\ge2\}$ be a sequence of probability measures on $\Omega_n$.
Consider the Markov process $\eta=\{\eta(t);t\ge0\}$ starting from $\mu_{n,0}$ and generated by the operator
\begin{equation}
\label{eq:generator}
  \begin{aligned}
  L_{n,t} &:= pnL_\ta+\sigma n^{1+\kappa} L_\ss+n^{1+\theta}L_{-,t}+n^{1+\theta}L_{+,t}\\
  &= n\big[pL_\ta+\sigma n^\kappa L_\ss+n^\theta(L_{-,t}+L_{+,t})\big],
  \end{aligned}
\end{equation}
where $p>0$, $\sigma>0$, $\kappa\in(0,1)$ and $\theta\in\bR$ are constants.
In the last line of \eqref{eq:generator}, the factor $n$ corresponds to the hyperbolic time scale.
The symmetric exclusion is speeded up by $n^\kappa$, $\kappa>0$ to enhance the convergence to the local equilibrium.
We expect the result holds for $\kappa=0$, but our method is restricted to $\kappa>2^{-1}$, see, e.g., \eqref{eq:assp-k}.
Observe that it is \emph{not} the weakly asymmetric exclusion, since the symmetry is too weak to survive in the limit $n\to\infty$ under the hyperbolic scale.
From a macroscopic point of view, $n^{1+\kappa}L_\ss$ serves as vanishing viscosity when $\kappa<1$, see Section \ref{subsec:viscous} below.

The strengths of the boundary reservoirs are regulated by $n^\theta$.
In particular, we call the reservoirs \emph{weak} or slow if $\theta<0$ and \emph{strong} or fast if $\theta>0$.
The fast case has been treated in \cite{Xu21}, and the result is summarised in Theorem \ref{thm:fast}.
As mentioned in the introduction, we focus on two cases: (1) $\theta<0$ and (2) $\theta=0$, $\alpha(t)=\beta(t)\equiv0$.
A conjecture for the case $\theta=0$ with general $L^\infty$ functions $\alpha$, $\beta$, $\gamma$, $\delta$ can be found in Section \ref{subsec:viscous}.

\subsection{The macroscopic equation}
\label{subsec:pde}

Suppose that $\alpha$, $\beta$, $\gamma$, $\delta$ are non-negative, $L^\infty$ functions.
The density of the particles in $\eta(t)$ is expected to evolve with the initial--boundary problem of the scalar conservation law on $[0,1]$ given by
\begin{equation}
\label{eq:cl}
  \left\{
  \begin{aligned}
    &\,\partial_tu(t,x)+p\partial_x\big[J(u(t,x))\big]=0, \quad J(u)=u(1-u),\\
    &\,u(t,0)=v_-(t), \quad u(t,1)=v_+(t), \quad u(0,x)=v_0(x),
  \end{aligned}
  \right.
\end{equation}
where $v_\pm \in L^\infty(\bR_+)$ and $v_0 \in L^\infty([0,1])$ satisfy that $v_\pm$, $v_0\in[0,1]$.
These functions would be determined in Theorem \ref{thm:fast} and \ref{thm:slow}.
It is well-known that the weak solution to the nonlinear hyperbolic equation is not regular or unique.
In particular, even when $v_0$ and $v_\pm$ are smooth functions, discontinuities such as shocks and boundary layers appear within finite time.
We are hence forced to consider the entropy solution.

\begin{defn}
\label{defn:ent-flux}
A pair of functions $F$, $Q$ on $[0,1]$ is called a \emph{Lax entropy flux pair} associated to $J$, if they are twice continuously differentiable,
\begin{align}
  Q'(u)=J'(u)F'(u)=(1-2u)F'(u), \quad F''(u)\ge0, \quad \forall\,u\in[0,1].
\end{align}
A pair of functions $\mathcal F$, $\mathcal Q$ on $[0,1]^2$ is called a \emph{boundary entropy flux pair}, if $(\mathcal F,\mathcal Q)(\cdot,v)$ is a Lax entropy flux pair for each $v\in[0,1]$ and
\begin{align}
  \mathcal F(v,v)=\partial_u\mathcal F(u,v)|_{u=v}=\mathcal Q(v,v)=0, \quad \forall\,v\in[0,1].
\end{align}
\end{defn}

\begin{defn}
\label{defn:ent-sol}
A function $u \in L^\infty(\bR_+\times[0,1])$ is called the \emph{entropy solution} to \eqref{eq:cl} if and only if the following conditions are fulfilled \cite{Otto96}:
\begin{itemize}
\item[(\romannum1)] the entropy inequality: for all Lax entropy flux pairs $(F,Q)$,
\begin{align}
\label{eq:ent-sol-1}
  \partial_t\big[F(u)\big]+p\partial_x\big[Q(u)\big] \le 0 \quad \text{as a distribution on} \ \bR_+\times(0,1);
\end{align}
\item[(\romannum2)] the $L^1$-initial condition:
\begin{align}
\label{eq:ent-sol-2}
  \esslim_{t\to0+} \int_0^1 \big|u(t,x)-v_0(x)\big|dx = 0;
\end{align}
\item[(\romannum3)] the Otto type boundary conditions: for all boundary flux $\mathcal Q$ and all $\phi\in\cC(\bR_+)$, $\phi\ge0$ with compact support,
\begin{equation}
\label{eq:ent-sol-3}
  \begin{aligned}
    \esslim_{x\to0+} \int_0^\infty \phi(t)\mathcal Q\big(u(t,x),v_-(t)\big)dt \le 0,\\
    \esslim_{x\to1-} \int_0^\infty \phi(t)\mathcal Q\big(u(t,x),v_+(t)\big)dt \ge 0.
  \end{aligned}
\end{equation}
\end{itemize}
\end{defn}

\begin{rem}
\label{rem:viscous}
The concept of entropy solution is motivated by the viscous approximate.
For $\ve>0$, let $u^\ve$ be the strong solution to the parabolic perturbation of \eqref{eq:cl}:
\begin{align}
\label{eq:viscous}
  \partial_tu+p\partial_xJ=\ve\partial_x^2u, \quad u|_{x=0}=v_-, \quad u|_{x=1}=v_+, \quad u|_{t=0}=v_0.
\end{align}
As $\ve\to0$, $u^\ve$ converges, with respect to the topology of $\cC([0,T];L^1([0,1]))$, to the entropy solution $u$ defined in Definition \ref{defn:ent-sol} for any $T>0$, see \cite[Theorem 2.8.20]{MNRR96}.
Also, the entropy solution is unique, see \cite[Theorem 2.7.28]{MNRR96}.
\end{rem}

\begin{rem}
\label{rem:ent-sol}
The validity of \eqref{eq:ent-sol-1} for all Lax entropy flux pairs is equivalent to the validity for a countable set of Lax entropy flux pairs.
Indeed, let $F\in\cC^\infty(\bR)$ satisfy that
\begin{align}
  F(u)=|u|, \ \forall\,|u|>1, \quad F'(0)=0, \quad F''(u)\ge0, \ \forall\,u\in\bR.
\end{align}
For rational number $c$ and positive integer $m$, define
\begin{align}
  F_{m,c}(u):=m^{-1}F\big(m(u-c)\big), \quad \forall\,u\in[0,1].
\end{align}
Then, $F_{m,c}$ is a Lax entropy.
The corresponding flux can be chosen as
\begin{align}
  Q_{m,c}(u):=\int_c^u J'(w)F'(m(w-c))dw, \quad \forall\,u\in[0,1].
\end{align}
Suppose that \eqref{eq:ent-sol-1} holds for all pairs in the countable set $\{(F_{m,c},Q_{m,c});m\in\bN,c\in\bQ\}$.
Then, it is easy to verify \eqref{eq:ent-sol-1} for all $(F_c,Q_c)$, $c\in\bR$, where $F_c(u):=|u-c|$ and $Q_c(u):=\sgn(u-c)(J(u)-J(c))$.
The conclusion then follows, since every convex function belongs to the convex hull of the set of all affine functions and all $F_c$, $c\in\bR$.
\end{rem}

If $v_0$ and $v_\pm$ have bounded variations, then also does $u$.
In this case, the limits of $u(t,x)$ for $t\to0$, $x\to0$ and $x\to1$ are well-defined, and (\romannum3) is equivalent to the Bardos--LeRoux--N\'ed\'elec conditions \cite{BLN79}.
Under general $L^\infty$ framework, those limits may not exist.
Instead, Vasseur \cite{Vass01} proves that $u$ possesses strong initial and boundary traces.
This result is crucial for our argument, so we state it here.

\begin{prop}[{\cite[Theorem 1]{Vass01}}]
\label{prop:bd-trace}
Fix some arbitrary $T>0$.
Assume that $u\in L^\infty([0,T]\times[0,1])$ satisfies (\romannum1) in Definition \ref{defn:ent-sol} in sense of distributions on $(0,T)\times(0,1)$, then there are $u_0 \in L^\infty([0,1])$ and $u_\pm \in L^\infty([0,T])$, such that
\begin{gather}
  \esslim_{t\to0+} \int_0^1 |u(t,x)-u_0(x)|dx = 0,\\
  \esslim_{x\to0+} \int_0^T |u(t,x)-u_-(t)|dx = 0, \quad \esslim_{x\to1-} \int_0^T |u(t,x)-u_+(t)|dx = 0.
\end{gather}
\end{prop}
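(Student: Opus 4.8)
The plan is to follow the kinetic approach to strong traces, which is how Vasseur's theorem is proved. The statement is invariant under the reflection $x\mapsto1-x$, and the initial trace at $t=0$ is obtained by the same method as the boundary traces, so I would concentrate on the trace at $x=0$ and then indicate the (cosmetic) modifications for the remaining two.

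First, I would reformulate condition (\romannum1) kinetically. Writing $A(u):=pJ(u)=pu(1-u)$, so that $a(v):=A'(v)=p(1-2v)$, the validity of \eqref{eq:ent-sol-1} for all Lax entropy flux pairs is, by the Lions--Perthame--Tadmor equivalence, the same as the existence of a nonnegative, locally finite measure $m$ on $(0,T)\times(0,1)\times\bR$ such that the kinetic function
\begin{equation}
  \chi(v,u):=\mathbf 1_{\{0<v<u\}}-\mathbf 1_{\{u<v<0\}}, \qquad f(t,x,v):=\chi\big(v,u(t,x)\big)
\end{equation}
solves $\partial_tf+a(v)\partial_xf=\partial_vm$ in the sense of distributions. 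The crucial structural fact, used throughout, is that the genuine nonlinearity of $J$ forces $a$ to be strictly monotone, hence nowhere locally constant.

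Second, I would establish the existence of a \emph{weak} trace. Testing the kinetic equation against $\phi(t)\psi(v)$ shows that $x\mapsto\int f(t,x,v)\phi(t)\psi(v)\,dt\,dv$ has locally bounded variation near $x=0$, and therefore admits a limit as $x\to0+$; this produces a weak-$\ast$ limit $f^0_-(t,v)$ of $f(t,\cdot,v)$. The whole difficulty is to upgrade this weak trace to strong $L^1$ convergence of $u(t,x)$ itself, which I would do by a blow-up argument. Around a point $(t_0,0)$ I rescale $f^\ve(s,y,v):=f(t_0+\ve s,\ve y,v)$; each $f^\ve$ solves the same kinetic equation on $\{y>0\}$ with a rescaled defect measure $m^\ve$. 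Since $m$ is locally finite, for a.e.\ $t_0$ the mass of $m^\ve$ on any fixed bounded set tends to $0$; hence, applying a velocity-averaging lemma — which is available precisely because $a$ is nowhere locally constant — the averages $\int f^\ve\psi\,dv$ converge strongly in $L^1_{loc}$ along a subsequence. The limit $f^0$ is then again of equilibrium form, $f^0=\chi(v,U)$, and solves the homogeneous transport equation $\partial_sf^0+a(v)\partial_yf^0=0$ on the half-space.

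The crux — and the step I expect to be the main obstacle — is the rigidity of the blow-up profile: I must show that an equilibrium solution of the free transport equation on $\{y>0\}$ is necessarily independent of $(s,y)$. The function $f^0$ is constant along each characteristic $y-a(v)s=\mathrm{const}$, and the strict monotonicity of $a$ means distinct velocities travel at distinct speeds; the only way a single equilibrium $\chi\big(v,U(s,y)\big)$ can be simultaneously transported by this entire family of speeds is for $U$ to be constant, necessarily equal to the weak trace value identified in the second step. This is exactly where the genuine nonlinearity of $J$ is indispensable. Constancy of every subsequential blow-up limit, together with uniqueness of the weak trace, yields $\esslim_{x\to0+}\int_0^T|u(t,x)-u_-(t)|\,dt=0$ for some $u_-\in L^\infty([0,T])$. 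Blowing up instead at a point $(0,x_0)$ produces the initial trace $u_0$, and the reflection $x\mapsto1-x$ gives $u_+$, which completes the proof.
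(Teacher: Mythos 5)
The paper gives no proof of this proposition at all: it is imported verbatim as Theorem 1 of \cite{Vass01}, whose hypotheses are met here because the flux $J(u)=u(1-u)$ produces a strictly monotone kinetic velocity $a(v)=pJ'(v)=p(1-2v)$, i.e., the genuine-nonlinearity (non-degeneracy) condition Vasseur requires. Your sketch is a faithful reconstruction of Vasseur's own argument --- kinetic formulation with defect measure, weak trace, blow-up at boundary points combined with a velocity-averaging lemma, and rigidity of equilibrium solutions of free transport on a half-space --- so it follows essentially the same route as the cited source; the only step stated more casually than it deserves is the claim that the rescaled defect measure $m^\ve$ vanishes for a.e.\ $t_0$, which needs the preliminary observation that $m$ is finite on sets of the form $[\tau,T-\tau]\times(0,1)\times\bR$ (obtained by testing the entropy inequality against cutoffs of uniformly bounded total variation), since local finiteness on the open domain alone does not rule out concentration of $m$ at the boundary, and only then does the maximal-function argument give the a.e.\ statement.
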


Using the boundary traces, \eqref{eq:ent-sol-2}, \eqref{eq:ent-sol-3} are equivalent to $u_0=v_0$, $\pm\mathcal Q(u_\pm,v_\pm)\ge0$.
We can further rewrite the boundary conditions explicitly as follows.

\begin{prop}
\label{prop:ent-sol}
Assume that $u \in L^\infty([0,T]\times[0,1])$ satisfies (\romannum1) in Definition \ref{defn:ent-sol}, then (\romannum3) holds if and only if for almost all $t\in[0,T]$,
\begin{equation}
\label{eq:ent-sol-4}
  \begin{aligned}
    u_-(t) &\in \big\{v_-(t)\big\}\cup \big[ 1-\min\{2^{-1},v_-(t)\},1],\\
    u_+(t) &\in \big[0,1-\max\{2^{-1},v_+(t)\}\big]\cup\big\{v_+(t)\big\}.
  \end{aligned}
\end{equation}
\end{prop}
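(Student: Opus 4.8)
The plan is to reduce (\romannum3) to a pointwise sign condition on the boundary traces and then to characterise, for each fixed boundary datum $v\in[0,1]$, the set of admissible trace values. Since $u$ satisfies (\romannum1), Proposition \ref{prop:bd-trace} furnishes strong traces $u_\pm$, and because each boundary flux $\mathcal Q(\cdot,v)$ is Lipschitz on $[0,1]$, the maps $t\mapsto\mathcal Q(u(t,x),v_-(t))$ converge in $L^1$ as $x\to0+$ to $\mathcal Q(u_-(t),v_-(t))$, and similarly at $x=1$. Testing \eqref{eq:ent-sol-3} against arbitrary $\phi\ge0$ then shows that (\romannum3) is equivalent to the pointwise (in $t$) inequalities $\mathcal Q(u_-(t),v_-(t))\le0$ and $\mathcal Q(u_+(t),v_+(t))\ge0$, required for every boundary entropy flux pair. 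It therefore suffices to determine, for fixed $v$, the set $A_-(v):=\{a\in[0,1]:\mathcal Q(a,v)\le0\text{ for all boundary flux }\mathcal Q\}$ and its analogue $A_+(v)$ with the reversed inequality; the claim \eqref{eq:ent-sol-4} asserts $A_-(v)=\{v\}\cup[1-\min\{2^{-1},v\},1]$ and $A_+(v)=[0,1-\max\{2^{-1},v\}]\cup\{v\}$.

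Second, I would obtain an integral representation of $\mathcal Q(\cdot,v)$. Fixing $v$ and writing $g(w):=\partial_u\mathcal F(w,v)$, the defining relations of Definition \ref{defn:ent-flux} say exactly that $g$ is nondecreasing with $g(v)=0$ and that $\mathcal Q(a,v)=\int_v^a(1-2w)g(w)\,dw$. Conversely, any $\cC^1$ nondecreasing $g$ vanishing at $v$ is realised (after smoothing) by a boundary flux, and the functional $g\mapsto\mathcal Q(a,v)$ is $L^1$-continuous in $g$; hence it is enough to run over a convenient generating family of such $g$. The cone of nondecreasing functions vanishing at $v$ is generated by the one-sided indicators $g^+_c:=\mathbf 1_{\{w>c\}}$ for $c\ge v$ and $g^-_c:=-\mathbf 1_{\{w<c\}}$ for $c\le v$: indeed $g(w)=\int_{[v,1]}g^+_c(w)\,dg(c)+\int_{[0,v]}g^-_c(w)\,d(-g)(c)$ with nonnegative weights. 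A direct computation, using $\int_c^a(1-2w)\,dw=J(a)-J(c)$, gives $\mathcal Q(a,v)=J(a)-J(c)$ when $a>c\ge v$ (from $g^+_c$) and $\mathcal Q(a,v)=J(c)-J(a)$ when $a<c\le v$ (from $g^-_c$), while the remaining generators contribute $0$. By the nonnegativity of the weights, the sign condition on all of $\mathcal Q(\cdot,v)$ is equivalent to its validity on this family.

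Consequently $a\in A_-(v)$ if and only if either $a=v$, or $a>v$ and $J(a)\le J(c)$ for all $c\in[v,a]$, or $a<v$ and $J(a)\ge J(c)$ for all $c\in[a,v]$; and $A_+(v)$ is described by the same trichotomy with both inequalities reversed. The final step is to evaluate these monotone constraints using that $J(u)=u(1-u)$ is strictly concave and symmetric about $u=2^{-1}$, so that $J(a)\le J(v)\iff a\notin(v,1-v)$ when $v<2^{-1}$. A short case analysis on the position of $v$ relative to $2^{-1}$ then yields \eqref{eq:ent-sol-4}; for instance, for $v=v_-<2^{-1}$ the branch $a>v$ forces $a\ge 1-v$ while the branch $a<v$ is empty, producing $\{v\}\cup[1-v,1]=\{v\}\cup[1-\min\{2^{-1},v\},1]$.

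The main obstacle is the passage between the $\cC^2$ boundary entropy flux pairs of Definition \ref{defn:ent-flux} and the non-smooth generators $g^\pm_c$: one must check that smoothing these indicators yields genuine boundary fluxes and that the closed inequalities survive the $L^1$-limit, and, in the sufficiency direction, that the superposition formula for $g$ legitimately transfers the sign of $\mathcal Q$ from the generators to arbitrary boundary entropies (a Fubini argument with the nonnegative measures $dg$ and $d(-g)$). Once this reduction is secured, the remaining computation is elementary and driven entirely by the concavity and symmetry of $J$.
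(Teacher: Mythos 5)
Your proposal is correct, and it takes a genuinely different route through the core of the argument. Both you and the paper start identically: traces from Proposition \ref{prop:bd-trace} together with Lipschitz continuity of $\mathcal Q$ reduce (\romannum3) to the pointwise conditions $\mathcal Q(u_-(t),v_-(t))\le0$, $\mathcal Q(u_+(t),v_+(t))\ge0$, and both exploit the representation $\mathcal Q(a,v)=\int_v^a(1-2w)g(w)\,dw$ with $g=\partial_u\mathcal F(\cdot,v)$ nondecreasing and $g(v)=0$. The paper then proves sufficiency of \eqref{eq:ent-sol-4} by a direct sign estimate valid for an \emph{arbitrary} boundary entropy --- splitting on whether $v_-\le2^{-1}$ and, in the delicate subcase $u_-\in[1-v_-,1]$, rewriting $\int_{v_-}^{1-v_-}(1-2w)\partial_w\mathcal F\,dw$ as a double integral that pairs $w$ with $1-w$ --- and proves necessity by testing with one explicit sequence of smoothed three-level step entropies $f_m$. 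You instead reduce the whole cone of admissible $g$ to its extremal generators $g_c^{\pm}$ (one-sided steps), compute the generator fluxes of the form $J(a)-J(c)$, and use that single family in both directions: smoothing for necessity, Stieltjes superposition plus Fubini for sufficiency. This buys a cleaner structural statement --- the admissible trace sets appear as the monotonicity conditions $J(a)\le J(c)$ on $[v,a]$ (resp.\ reversed), essentially the Bardos--LeRoux--N\'ed\'elec characterisation, with one computation serving both implications --- at the cost of the cone-generation and Fubini step that you correctly flag as the main obstacle, and which the paper's bare-hands estimate avoids entirely. Two small repairs to your write-up: in the superposition formula the weights on $[0,v]$ should be $dg(c)$ (already a nonnegative measure, since $g$ is nondecreasing), not $d(-g)(c)$, otherwise the reconstructed $g$ has the wrong sign on $[0,v)$; and when realising a smoothed generator as a genuine boundary entropy flux pair you must define it on all of $[0,1]^2$, not only at the fixed datum $v$ --- this is immediate by setting $\partial_u\mathcal F(u,v'):=g(u)-g(v')$ for every $v'\in[0,1]$ (the paper's $f_m(u,v)$ builds this $v$-dependence in from the start).
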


\begin{proof}
First assume \eqref{eq:ent-sol-4}.
Note that for any boundary entropy pair $(\mathcal F,\mathcal Q)$, $\partial_w\mathcal F(w,v)\ge0$ if $w>v$, and $\partial_w\mathcal F(w,v)\le0$ if $w \le v$.
If $v_->2^{-1}$, \eqref{eq:ent-sol-4} means that $u_-\in[2^{-1},1]$ and
\begin{align}
  \mathcal Q(u_-,v_-)=\int_{v_-}^{u_-} (1-2w)\partial_w\mathcal F(w,v_-)dw\le0.
\end{align}
Meanwhile, if $v_- \le 2^{-1}$, then \eqref{eq:ent-sol-4} requires that $u_-=v_-$ or $u_-\in[1-v_-,1]$.
Observe that when $u_-=v_-$, $\mathcal Q(u_-,v_-)=0$, and when $u_-\in[1-v_-,1]$,
\begin{equation}
  \begin{aligned}
    \mathcal Q(u_-,v_-) &\le \int_{v_-}^{1-v_-} (1-2w)\partial_w\mathcal F(w,v_-)dw\\
    &= \int_{v_-}^{\frac12} (2w-1)\int_w^{1-w} \partial_{w'}^2\mathcal F(w',w)dw'dw\le0.
  \end{aligned}
\end{equation}
Therefore, $\mathcal Q(u_-,v_-)\le0$.
The assertion for $u_+$ follows similarly.

On the other hand, assume that $\mathcal Q(u_-,v_-)\le0$, $\mathcal Q(u_+,v_+)\ge0$ for all boundary entropy flux pair $(\mathcal F,\mathcal Q)$.
For each $m\in\bN$, let $f_m\in\cC^1([0,1]\times\bR)$ be such that $\partial_uf_m(u,v)\ge0$,
\begin{align}
  f_m(u,v)=
  \begin{cases}
    -1, &\text{if} \ u\le\underline v_m-m^{-1},\\
    0, &\text{if} \ u\in[\underline v_m, \bar v_m],\\
    1, &\text{if} \ u\ge\bar v_m+m^{-1},
  \end{cases}
  \qquad
  \begin{aligned}
    &\underline v_m:=\min\{2^{-1},v\}-m^{-1},\\
    &\bar v_m:=v+m^{-1}.
  \end{aligned}
\end{align}
Then, $\mathcal F_m(u,v):=\int_v^u f_m(w,v)dw$ is a boundary entropy corresponding to the flux $\mathcal Q_m(u,v)=\int_v^u (1-2w)f_m(w,v)dw$.
Note that $\mathcal Q_m(u,v)=0$ if $\underline v_m \le u \le \bar v_m$.
If $u < \underline v_m-m^{-1} < 2^{-1}$,
\begin{equation}
  \begin{aligned}
    \mathcal Q_m(u,v) &= -\int_u^{\underline v_m-\frac1m} -(1-2w)dw - \int_{\underline v_m-\frac1m}^{\underline v_m} (1-2w)f_m(w,v)dw\\
    &\ge \int_u^{\underline v_m-\frac1m} (1-2w)dw = J \left( \underline v_m-\frac1m \right) - J(u) > 0.
  \end{aligned}
\end{equation}
Similarly, if $v<2^{-1}$, then $\bar v_m+m^{-1}<2^{-1}$ for $m$ large enough.
In this case, for $u$ such that $\bar v_m+m^{-1}<u<1-(\bar v_m+m^{-1})$,
\begin{equation}
  \begin{aligned}
    \mathcal Q_m(u,v) &= \int_{\bar v_m}^{\bar v_m+\frac1m} (1-2w)f_m(w,v)dw + \int_{\bar v_m+\frac1m}^u (1-2w)dw\\
    &\ge \int_{\bar v_m+\frac1m}^u (1-2w)dw = J(u) - J \left( \bar v_m+\frac1m \right) > 0.
  \end{aligned}
\end{equation}
Since $\mathcal Q_m(u_-,v_-)\le0$ for all $m$, the first formula in \eqref{eq:ent-sol-4} follows.
Repeating the argument for $\mathcal Q_m(u_+,v_+)$, the second formula holds similarly.
\end{proof}

\subsection{Hydrodynamic limit}
\label{subsec:hl}

Recall that $\eta=\eta(t)$ is the process generated by $L_{n,t}$ in \eqref{eq:generator} and the initial distribution $\mu_{n,0}$.
Assume that there exists $v_0 \in L^\infty([0,1])$, such that
\begin{align}
\label{eq:assp-initial}
  \lim_{n\to\infty} \mu_{n,0} \left\{ \left| \frac1n\sum_{i=1}^n \eta_i(0)\vf \left( \frac in \right) - \int_0^1 v_0(x)\vf(x)dx \right| > \ve \right\} = 0,
\end{align}
for any $\ve>0$ and any $\vf \in \cC([0,1])$.

Denote by $\bP_n$ the distribution of $\eta(\cdot)$ on the path space.
The corresponding expectation is written as $\bE_n$.
Our aim is to prove the hydrodynamic limit: for any $\ell\in\bN$, local observation $f=f(\eta_1,\ldots,\eta_\ell)$, test function $\psi\in\cC_c(\bR^2)$ and $\ve>0$,
\begin{equation}
\label{eq:hl}
  \begin{aligned}
    \lim_{n\to\infty} \bP_n \Bigg\{ \bigg| \int_0^\infty &\frac1n\sum_{i=0}^{n-\ell} f\big(\tau_i\eta(t)\big)\psi \left( t,\frac in \right) dt\\
    &- \int_0^\infty \!\! \int_0^1 \langle f \rangle(u(t,x))\psi(t,x)\,dx\,dt \bigg| > \ve \Bigg\} = 0,
  \end{aligned}
\end{equation}
where $f(\tau_i\eta):=f(\eta_{i+1},\ldots,\eta_{i+\ell})$, $\langle f \rangle(\rho)$ is the expectation of $f$ with respect to the Bernoulli measure with density $\rho\in[0,1]$, and $u$ is decided in Theorem \ref{thm:fast} and \ref{thm:slow} below.

When $\theta>0$, the following result is proved in \cite{Xu21}.

\begin{thm}[Strong reservoirs]
\label{thm:fast}
Suppose that $\kappa\in(5/7,1)$, $\theta>0$ and $\essinf\{\alpha,\beta,\gamma,\delta\}>0$, then \eqref{eq:hl} holds with the entropy solution $u$ to \eqref{eq:cl} with $v_-(t)=\alpha(t)(\alpha(t)+\gamma(t))^{-1}$ and $v_+(t)=\delta(t)(\beta(t)+\delta(t))^{-1}$.
\end{thm}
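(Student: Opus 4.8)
The plan is to prove that every subsequential limit of the empirical density field is the (unique) entropy solution to \eqref{eq:cl} with $v_-=\alpha/(\alpha+\gamma)$ and $v_+=\delta/(\beta+\delta)$, and then to deduce \eqref{eq:hl} from uniqueness (Remark \ref{rem:viscous}). First I would show tightness of the laws of $\pi^n_t(dx):=n^{-1}\sum_{i=1}^n\eta_i(t)\,\delta_{i/n}(dx)$ in the Skorokhod space and extract a limit point $\pi_t(dx)=u(t,x)\,dx$, the density bound $u\in[0,1]$ coming from $\eta_i\in\{0,1\}$. The initial condition \eqref{eq:ent-sol-2} then follows from hypothesis \eqref{eq:assp-initial} and the $L^1$-continuity of $t\mapsto\pi_t$ at $t=0$ supplied by tightness.

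For the interior equation the central estimate is a bound on the entropy production. Comparing the law of $\eta(t)$ to a smooth product reference measure and invoking the logarithmic Sobolev inequality of \cite{Yau97} for the accelerated symmetric part $\sigma n^{1+\kappa}L_\ss$, I would control the associated Dirichlet form uniformly in $n$; this yields the one- and two-block replacement of a local function $f(\tau_i\eta)$ by $\langle f\rangle(\rho)$ evaluated at mesoscopic block densities. Since the totally asymmetric current across a bond is the discrete gradient of $\eta_i(1-\eta_{i+1})$, whose block average equilibrates to $J(\rho)=\rho(1-\rho)$, the limit satisfies $\partial_tu+p\partial_xJ(u)=0$ weakly on $\bR_+\times(0,1)$. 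To single out the entropy solution I would run the compensated compactness scheme of \cite{Fritz04,FritzT04}: the speeded symmetric exclusion acts as a vanishing viscosity that produces enough entropy dissipation for the div--curl lemma to force the Young measure of $\{\pi^n\}$ to be a Dirac mass and to deliver the entropy inequality (\romannum1) for the countable family of Lax pairs of Remark \ref{rem:ent-sol}. The threshold $\kappa>5/7$ is what makes the relevant error terms vanish here and in the boundary analysis.

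The delicate point is the boundary condition (\romannum3). By Proposition \ref{prop:bd-trace} the limit admits strong traces $u_\pm(t)$, and as noted after that proposition (\romannum3) is equivalent to $\mathcal Q(u_-,v_-)\le0$ and $\mathcal Q(u_+,v_+)\ge0$ for all boundary entropy flux pairs. The key structural fact is that $\theta>0$ renders the reservoir generator $n^{1+\theta}L_{-,t}$ much faster than the bulk $pnL_\ta$: since $L_{-,t}\eta_1=\alpha(t)-(\alpha(t)+\gamma(t))\eta_1$, finiteness of the boundary current pins the occupation at site $1$ to its reversible value $v_-=\alpha/(\alpha+\gamma)$ up to fluctuations of order $n^{-\theta}$, and symmetrically $\eta_n$ to $v_+$. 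To convert this microscopic pinning into the trace sign conditions I would test a boundary entropy pair against the block average $k^{-1}\sum_{i\le k}\eta_i$ over the first $k=\lfloor\ve n\rfloor$ sites, differentiate in time, and isolate the reservoir contribution whose sign is fixed by the pinning. A \emph{grading scheme} is needed because a single replacement at the boundary fails: the entropy solution may develop a boundary layer, so the density need not be close to $v_-$ on any fixed macroscopic block. Instead one controls the density increments across a cascade of shrinking mesoscopic blocks, propagating the reservoir pinning inward while keeping the accumulated error summable via the entropy production bound.

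The main obstacle is exactly this boundary-layer control: the absence of an a priori modulus for the spatial oscillation of $u$ near $x=0,1$ is what obstructs a naive replacement lemma there, and the grading scheme combined with the Dirichlet-form estimate is the device that overcomes it. Once (\romannum1)--(\romannum3) are verified for the limit $u$, uniqueness of the entropy solution identifies $u$ unambiguously and promotes the subsequential distributional convergence to the convergence in probability required by \eqref{eq:hl}.
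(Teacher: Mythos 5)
You should first note that this paper does not prove Theorem \ref{thm:fast} at all: it is imported from \cite{Xu21}, and the machinery developed here (Sections \ref{sec:local-equil}--\ref{sec:ent-sol}) is tailored to $\theta\le0$. In particular the paper's boundary step, Lemma \ref{lem:weak-bd}, shows the macroscopic current \emph{vanishes} at the boundaries, forcing $J(u_\pm)=0$ and hence $u_\pm\in\{0,1\}$; under $\essinf\{\alpha,\beta,\gamma,\delta\}>0$ and $\theta>0$ that mechanism is simply false, since $v_\pm\in(0,1)$ and the boundary current need not vanish. So there is no in-paper proof to compare against, and your proposal must stand on its own. Its bulk part is sound and matches both this paper and \cite{Xu21}: log-Sobolev control of the Dirichlet form, block replacement, compensated compactness to force a Dirac Young measure and the entropy inequality, and reduction of the Otto condition to $\mathcal Q(u_-,v_-)\le0$, $\mathcal Q(u_+,v_+)\ge0$ via Proposition \ref{prop:bd-trace}.

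The gap is in the boundary analysis, at two levels. First, the pinning claim is quantitatively wrong as stated. Dynkin's formula for $\eta_1$ gives $n^{1+\theta}\,\bE_n\int_0^T[\alpha-(\alpha+\gamma)\eta_1]\,dt = O(1)+pn\,\bE_n\int_0^T\eta_1(1-\eta_2)\,dt-\sigma n^{1+\kappa}\bE_n\int_0^T(\eta_2-\eta_1)\,dt$, so the reservoir term competes with the symmetric current of order $n^{1+\kappa}$, $\kappa\in(5/7,1)$. Dividing by $n^{1+\theta}$, the error contains $n^{\kappa-\theta}\bE_n\int(\eta_2-\eta_1)\,dt$; even using the Dirichlet-form bound of Lemma \ref{lem:dir} to control this gradient (of order $n^{-\kappa/2}$ after time integration), the resulting error $O(n^{\kappa/2-\theta})$ diverges whenever $\theta<\kappa/2$. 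Thus the occupation at site $1$ is \emph{not} pinned to $v_-$ up to $O(n^{-\theta})$ by "finiteness of the boundary current" alone; this is exactly where additional work is needed. Second, and more fundamentally, even granting time-averaged pinning of $\bE_n[\eta_1]$, passing from this single-site statement to the trace inequality $\mathcal Q(u_-,v_-)\le0$ --- where $u_-$ is a limit of block densities over $\ve n$ sites, possibly separated from site $1$ by a boundary layer --- is the entire content of the theorem. Your proposal names the ``grading scheme'' of \cite{Xu21} but supplies none of it: no cascade of scales, no statement of what quantity is propagated inward, no bookkeeping of how the accumulated errors are summed against the entropy production. As written, the crux is a black box, so what you have is an accurate outline of the strategy of \cite{Xu21}, not a proof.
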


Hereafter we focus on the regime $\theta\le0$ and prove the following result.

\begin{thm}
\label{thm:slow}
Suppose that $\kappa\in(1/2,1)$ and one of the following holds: (1) $\theta<0$; (2) $\theta=0$, $\alpha(t)=\beta(t)=0$ for almost all $t>0$.
Then \eqref{eq:hl} holds with the entropy solution $u$ to \eqref{eq:cl} with $v_-(t)=0$ and $v_+(t)=1$.
\end{thm}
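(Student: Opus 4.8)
The plan is to show that every subsequential limit of the empirical density is the unique entropy solution of \eqref{eq:cl} with $v_-\equiv0$, $v_+\equiv1$, and then to upgrade this to the full convergence \eqref{eq:hl} via uniqueness (Remark \ref{rem:viscous}). First I would prove tightness of the law of the empirical measure $\pi^n_t$ on the relevant path space; any limit point is carried by an $L^\infty$-valued trajectory $u$ with $0\le u\le1$. It then suffices to verify, for such a limit, the three defining properties (\romannum1)--(\romannum3) of Definition \ref{defn:ent-sol}. The whole argument works identically in cases (1) and (2), and uses only $\kappa>1/2$.

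For the bulk entropy inequality (\romannum1) I would follow the scheme of \cite{Xu21}, combining the logarithmic Sobolev inequality \cite{Yau97} with the compensated compactness argument of \cite{Fritz04,FritzT04}. The speeded symmetric part $\sigma n^{1+\kappa}L_\ss$ acts as a vanishing viscosity: the log-Sobolev inequality controls the entropy production and yields, through a one-block replacement, that mesoscopic averages of a local $f$ are close to $\langle f\rangle$ at the local density, while the symmetric Dirichlet form provides the $H^{-1}$-compactness of the Lax entropy-dissipation measures required by Tartar's theorem. The restriction $\kappa>1/2$ is dictated precisely by these entropy-production and martingale estimates. Compensated compactness then forces the Young measure of $\{\pi^n\}$ to collapse to a Dirac mass, giving both the strong convergence that underlies \eqref{eq:hl} (with $\langle f\rangle(u)$) and the entropy inequalities \eqref{eq:ent-sol-1} for the countable family of Lax pairs of Remark \ref{rem:ent-sol}. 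The initial condition (\romannum2) follows by matching \eqref{eq:assp-initial} with the strong initial trace of Proposition \ref{prop:bd-trace}, so that $u_0=v_0$.

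The core of the proof is the boundary condition (\romannum3). Since (\romannum1) holds, Proposition \ref{prop:bd-trace} supplies strong traces $u_\pm\in L^\infty$, and by Proposition \ref{prop:ent-sol} condition (\romannum3) is equivalent, for $v_-\equiv0$, $v_+\equiv1$, to $u_\pm(t)\in\{0,1\}$ for a.e.\ $t$; since $J(u)=u(1-u)$ vanishes exactly on $\{0,1\}$, this is the same as $J(u_-)=J(u_+)=0$. To obtain it I would integrate the microscopic mass balance over a boundary block. Writing $N_K=\sum_{i=1}^K\eta_i$ and the bond current $w_{i,i+1}=p\eta_i(1-\eta_{i+1})+\sigma n^{\kappa}(\eta_i-\eta_{i+1})$, Dynkin's formula gives, for $K=\lfloor \epsilon n\rfloor$,
\begin{equation*}
  n^{-1}\big[N_K(t)-N_K(0)\big]=-\int_0^t w_{K,K+1}\,ds+\int_0^t n^{\theta}\big[\alpha(1-\eta_1)-\gamma\eta_1\big]\,ds+n^{-1}M_K(t).
\end{equation*}
The left side is bounded by $\epsilon$ and $n^{-1}M_K\to0$. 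Averaging the identity over $K$ in a mesoscopic window of width $hn$ makes the viscous part of the current telescope to $O(n^{\kappa-1}/h)$; hence, as $n\to\infty$, the bulk term converges by the local equilibrium of the previous step to $\int_0^t pJ(u(s,\epsilon))\,ds$. Letting then $\epsilon\to0$ and using the strong trace, I am left with $\lim_n\int_0^t n^\theta[\alpha(1-\eta_1)-\gamma\eta_1]\,ds=\int_0^t pJ(u_-(s))\,ds$. In case (1), $\theta<0$ forces the left reservoir flux to zero, so $J(u_-)=0$. In case (2), $\alpha\equiv0$ makes the integrand $n^\theta[-\gamma\eta_1]\le0$, so the limit is $\le0$; combined with $pJ(u_-)\ge0$ this again yields $J(u_-)=0$. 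The symmetric balance on $[1-\epsilon,1]$, whose reservoir flux $n^\theta[\delta(1-\eta_n)-\beta\eta_n]$ vanishes for $\theta<0$ and is nonnegative when $\theta=0$, $\beta\equiv0$, gives $J(u_+)=0$. Thus $u_\pm\in\{0,1\}$ and (\romannum3) holds.

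I expect the boundary step to be the main obstacle, for two reasons. First, identifying $\lim_n w_{K,K+1}$ with the macroscopic flux $pJ(u(\cdot,\epsilon))$ and exchanging the limits $n\to\infty$ and $\epsilon\to0$ rest on local equilibrium persisting up to the boundary, exactly where the boundary layers obstruct a direct replacement lemma; it is the strong trace of Vasseur \cite{Vass01} that lets us bypass an explicit control of these layers. Second, the entire mechanism hinges on the reservoir flux having a definite sign --- it vanishes when $\theta<0$ and is one-signed when $\theta=0$ with $\alpha=\beta=0$ --- which is what turns the mass balance into the one-sided relations $\Phi_-^{\mathrm{lim}}\le0$, $\Phi_+^{\mathrm{lim}}\ge0$ and pins $J(u_\pm)=0$; for general $\theta=0$ data this sign is lost, which is why that regime remains conjectural. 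Finally, collecting (\romannum1)--(\romannum3) and invoking uniqueness of the entropy solution (Remark \ref{rem:viscous}) shows that all subsequential limits coincide with $u$, and the strong convergence from the compensated compactness step delivers \eqref{eq:hl}.
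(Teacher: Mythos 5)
Your proposal is correct and follows essentially the same route as the paper: local equilibrium via the logarithmic Sobolev inequality, stochastic compensated compactness for the bulk entropy inequality, and then Vasseur's strong traces combined with Proposition \ref{prop:ent-sol} to reduce the boundary conditions to $J(u_\pm)=0$, which you verify---exactly as in Lemma \ref{lem:weak-bd}---by a Dynkin mass balance over a boundary block, exploiting that the reservoir flux vanishes for $\theta<0$ and is one-signed when $\theta=0$, $\alpha=\beta\equiv0$. The only cosmetic differences (averaging the bond index over a mesoscopic window rather than summing over all $i\le[\ve n]$ and using $\kappa<1$, and asserting a two-sided flux identity where the paper only needs a one-sided bound) do not alter the argument.
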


Theorem \ref{thm:slow} is proved in Section \ref{subsec:proof}, employing the convergence to the local equilibrium in Section \ref{sec:local-equil}, the compensated compactness argument in Section \ref{sec:com-com} and the computation of initial--boundary traces in Section \ref{sec:ent-sol}.
The proofs for the two cases (1) and (2) are mostly in common, hence are stated together.

\begin{rem}
\label{rem:slow}
From Proposition \ref{prop:ent-sol}, the boundary conditions $v_-=0$, $v_+=1$ essentially mean that the boundary trace $u_\pm(t)\in\{0,1\}$ for almost all $t>0$.
Heuristically speaking, $u_+(t)=1$ is the natural result of that particles are not allowed to escape from the right boundary effectively in both cases.
Meanwhile, the possible discontinuity can be understood in the following way.
Suppose that the process starts from an initial configuration empty in a macroscopic block $\{[an],\ldots,n\}$ for some $a<1$.
Due to the rightward drift, the particles pumped in site $n$ by the reservoir would be restricted within a small distance from $n$.
Thus, $u_+(t)=0$ would be observed until the drift managed to transport a macroscopically visible number of particles into this block.
Similar phenomenon is exhibited near $x=0$, i.e., $u_-(t)=1$.
\end{rem}

\begin{rem}
A special case treated in Theorem \ref{thm:slow} is $\alpha$, $\beta$, $\gamma$, $\delta\equiv0$.
This defines an \emph{isolated system} in contact with no reservoirs.
As no particle exchange happens between the system and the surroundings, the total number of particles is preserved globally.
\end{rem}

\begin{rem}
\label{rem:separ}
One can generalise the model in Theorem \ref{thm:fast} by regulating the birth and death rates at the boundaries separately.
Precisely, for $\theta_{\pm,j}\in\bR$, $j=1$, $2$, define
\begin{equation}
  \begin{aligned}
    L_{-,t}f(\eta) &:= \big[n^{\theta_{-,1}}\alpha(t)(1-\eta_1)+n^{\theta_{-,2}}\gamma(t)\eta_1\big]\big[f(\eta^1)-f(\eta)\big],\\
    L_{+,t}f(\eta) &:= \big[n^{\theta_{+,1}}\beta(t)\eta_n+n^{\theta_{+,2}}\delta(t)(1-\eta_n)\big]\big[f(\eta^n)-f(\eta)\big].
  \end{aligned}
\end{equation}
Let $\eta(\cdot)$ be the Markov process generated by $n(pL_\ta+\sigma n^\kappa L_\ss+L_{-,t}+L_{+,t})$.
The result in Theorem \ref{thm:slow} extends to the case $\theta_{\pm,1}<0$, $\theta_{\pm,2}\le0$ with slight modifications on the proof, see Remark \ref{rem:separ1} and \ref{rem:separ2} for details.
\end{rem}

\subsection{Vanishing viscosity limit}
\label{subsec:viscous}

Recall the viscous approximate of the entropy solution in Remark \ref{rem:viscous}.
In this subsection, we introduce another type of viscous limit arising more naturally from the microscopic dynamics.

To simplify the problem, suppose that $\alpha$, $\beta$, $\gamma$ and $\delta$ are constant in time, and the initial distribution $\mu_{n,0}$ subjects to some smooth profile $v_0\in\cC^\infty([0,1])$, $0 \le v_0 \le 1$.
Recall that $\kappa<1$.
For small but fixed $\ve>0$, choose $\sigma=\sigma_n=\ve n^{1-\kappa}$ in \eqref{eq:generator} to define
\begin{align}
  L'_n = n^2\big[\ve L_\ss + pn^{-1}L_\ta + n^{\theta_*}(L_-+L_+)\big], \quad \theta_*:=\theta-1.
\end{align}
Here $n^2$ corresponds to the diffusive time scale, under which $L_\ss$ gives non-vanishing viscosity.
$L'_n$ then generates the weakly asymmetric simple exclusion (WASEP) in contact with reservoirs.
The corresponding hydrodynamic equation is obtained in \cite{CapiG21}:
\begin{align}
\label{eq:hl-wasep}
  \partial_tu(t,x)=\ve\partial_x^2u(t,x)-p\partial_xJ(u(t,x)), \quad u(0,x)=v_0(x),
\end{align}
for $(t,x)\in(0,T)\times(0,1)$, with the boundary conditions given by
\begin{align}
  \label{eq:bd-wasep-fast}
  &\text{if} \ \theta_*=\theta-1>-1, \quad u|_{x=0}=\alpha(\alpha+\gamma)^{-1}, \ u|_{x=1}=\delta(\beta+\delta)^{-1};\\
  \label{eq:bd-wasep}
  &\text{if} \ \theta_*=\theta-1=-1, \quad
  \begin{cases}
    [\ve\partial_xu-pJ(u)+\alpha-(\alpha+\gamma)u]\,\big|_{x=0}=0,\\
    [\ve\partial_xu-pJ(u)-\delta+(\beta+\delta)u]\,\big|_{x=1}=0;\\
  \end{cases}\\
  \label{eq:bd-wasep-slow}
  &\text{if} \ \theta_*=\theta-1<-1, \quad
  \begin{cases}
    [\ve\partial_xu-pJ(u)]\,\big|_{x=0}=0,\\
    [\ve\partial_xu-pJ(u)]\,\big|_{x=1}=0.
  \end{cases}
\end{align}
In Remark \ref{rem:viscous}, we see that the vanishing viscosity limit $\ve\to0$ of \eqref{eq:hl-wasep}--\eqref{eq:bd-wasep-fast} gives the hydrodynamic equation in Theorem \ref{thm:fast}.
Similar convergence is proved in below for the boundary conditions \eqref{eq:bd-wasep} and \eqref{eq:bd-wasep-slow}, respectively.

\begin{thm}
\label{thm:viscous}
Let $u^\ve\in\cC^\infty([0,T]\times[0,1])$ solve \eqref{eq:hl-wasep} and \eqref{eq:bd-wasep}.
As $\ve\to0$, $u^\ve$ converges in the weak-$\star$ topology of $L^\infty([0,T]\times[0,1])$ to the entropy solution to \eqref{eq:cl} with
\begin{equation}
\label{eq:bd-asep}
  \begin{aligned}
    &v_-=\frac{p+\alpha+\gamma-\sqrt{(p-\alpha+\gamma)^2+4\alpha\gamma}}{2p},\\
    &v_+=\frac{p-\beta-\delta+\sqrt{(p-\beta+\delta)^2+4\beta\delta}}{2p}.
  \end{aligned}
\end{equation}
For the solution to \eqref{eq:hl-wasep} and \eqref{eq:bd-wasep-slow}, the same convergence holds, and the limit is given by \eqref{eq:cl} with $v_-=0$, $v_+=1$, i.e., the hydrodynamic equation in Theorem \ref{thm:slow}.
\end{thm}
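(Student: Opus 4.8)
The plan is to realise the entropy solution $u$ as the vanishing-viscosity limit of the smooth solutions $u^\ve$ and to verify, for the weak-$\star$ limit, the three defining properties in Definition \ref{defn:ent-sol} with the stated $v_\pm$; uniqueness (Remark \ref{rem:viscous}) then promotes subsequential convergence to convergence of the whole family. First I would record the uniform bounds. A maximum-principle / invariant-region argument gives $0\le u^\ve\le1$: at a putative boundary value $u^\ve=1$ the Robin condition \eqref{eq:bd-wasep} forces $\ve\partial_x u^\ve|_{x=0}=\gamma\ge0$, incompatible with $x=0$ being an interior-pointing maximum, and symmetrically for $u^\ve=0$ and for $x=1$; in the Neumann case \eqref{eq:bd-wasep-slow} the flux vanishes and the same holds. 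Testing the equation against $u^\ve$ and integrating by parts, the boundary contributions are bounded through \eqref{eq:bd-wasep}--\eqref{eq:bd-wasep-slow} by the $L^\infty$ bound, yielding the energy estimate $\ve\int_0^T\!\!\int_0^1(\partial_x u^\ve)^2\,dx\,dt\le C$ uniformly in $\ve$, so $\ve^{1/2}\partial_x u^\ve$ is bounded in $L^2$.

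Next I would pass to the limit. With $0\le u^\ve\le1$ there is a weak-$\star$ limit $u$ along a subsequence, and to treat the nonlinearity $J$ I would invoke compensated compactness (as in \cite{Fritz04,FritzT04}): for each Lax pair one writes $\partial_t F(u^\ve)+p\partial_x Q(u^\ve)=\partial_x(\ve F'(u^\ve)\partial_x u^\ve)-\ve F''(u^\ve)(\partial_x u^\ve)^2$, where the first term tends to $0$ in $H^{-1}_{\mathrm{loc}}$ and the second is bounded in $\mathcal M$, so the left-hand side is precompact in $H^{-1}_{\mathrm{loc}}$; the div--curl lemma together with the genuine nonlinearity $J''\equiv-2$ forces the Young measure to be a Dirac mass, whence $u^\ve\to u$ strongly in $L^p_{\mathrm{loc}}$ and a.e. Strong convergence lets me pass to the limit in the entropy inequality, giving (\romannum1), while the common datum $u^\ve|_{t=0}=v_0$ gives (\romannum2). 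By Proposition \ref{prop:bd-trace} the limit $u$ then admits strong traces $u_\pm$, and it remains to establish (\romannum3).

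The crux is the boundary analysis, which I would reduce to $\mathcal Q(u_-,v_-)\le0$ and $\mathcal Q(u_+,v_+)\ge0$ for every boundary entropy flux pair (equivalent to (\romannum3) by the discussion after Proposition \ref{prop:bd-trace}). Fix such a pair and the constant target $v_-$; multiplying the viscous equation by $\partial_u\mathcal F(u^\ve,v_-)$ and using convexity of $\mathcal F(\cdot,v_-)$ gives $\partial_t\mathcal F(u^\ve,v_-)+p\partial_x\mathcal Q(u^\ve,v_-)\le\ve\partial_x^2\mathcal F(u^\ve,v_-)$. I would test this against $\phi(t)\psi_h(x)$ with $\phi\ge0$ supported in $(0,T)$ and $\psi_h(x)=(1-x/h)_+$ a tent at the left endpoint, integrate by parts in $x$, and substitute the boundary relation $\ve\partial_x u^\ve|_{x=0}=\Phi(u^\ve|_{x=0})$, where $\Phi(w):=pJ(w)-\alpha+(\alpha+\gamma)w$ (Robin) or $\Phi(w):=pJ(w)$ (Neumann). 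Sending $\ve\to0$ with $h$ fixed (so the $\ve/h$ remainder vanishes) and then $h\to0$ (so the tent averages converge to the trace via Proposition \ref{prop:bd-trace}) reduces everything to
\[
  \limsup_{\ve\to0}\int_0^T\phi(t)\,G^\ve(t)\,dt\le-p\int_0^T\phi(t)\,\mathcal Q\big(u_-(t),v_-\big)\,dt,\quad G^\ve:=-p\mathcal Q(w,v_-)+\partial_u\mathcal F(w,v_-)\,\Phi(w),
\]
evaluated at $w=u^\ve|_{x=0}$. Since $\phi\ge0$, it therefore suffices to prove $G^\ve\ge0$ pointwise in $w\in[0,1]$.

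The sign of $G^\ve$ is exactly where the value \eqref{eq:bd-asep} enters. The point is that $v_-$ is the root of $\Phi$ lying in $[0,1]$: as $\Phi$ is a downward parabola with $\Phi(0)=-\alpha\le0$ and $\Phi(1)=\gamma\ge0$, its other root is $\ge1$, so $\Phi\le0$ on $[0,v_-]$ and $\Phi\ge0$ on $[v_-,1]$. A direct computation, using the Lax relation $\partial_w\mathcal Q=J'\partial_u\mathcal F$, gives $\tfrac{d}{dw}G^\ve=\partial_u^2\mathcal F(w,v_-)\,\Phi(w)+(\alpha+\gamma)\,\partial_u\mathcal F(w,v_-)$; because $\partial_u\mathcal F(\cdot,v_-)$ has the sign of $(\cdot-v_-)$ and $\partial_u^2\mathcal F\ge0$, both summands are $\le0$ on $[0,v_-]$ and $\ge0$ on $[v_-,1]$, so $G^\ve$ decreases then increases with minimum $G^\ve(v_-)=0$ (as $\mathcal Q(v_-,v_-)=\partial_u\mathcal F(v_-,v_-)=0$); hence $G^\ve\ge0$ and $\mathcal Q(u_-,v_-)\le0$. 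The right endpoint is symmetric, with $\Psi(w):=pJ(w)+\delta-(\beta+\delta)w$ in place of $\Phi$: here $\Psi(0)=\delta\ge0$, $\Psi(1)=-\beta\le0$, $v_+$ is the root of $\Psi$ in $[0,1]$ with $\Psi\ge0$ on $[0,v_+]$ and $\Psi\le0$ on $[v_+,1]$, and the corresponding functional is $\le0$, giving $\mathcal Q(u_+,v_+)\ge0$. In the Neumann case $\Phi=\Psi=pJ$ vanishes exactly at $0$ and $1$, so $v_-=0$, $v_+=1$, recovering Theorem \ref{thm:slow}. With (\romannum1)--(\romannum3) verified and uniqueness invoked, $u$ is the entropy solution and the whole family converges. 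The main obstacle is the bookkeeping in the third step: isolating the genuine boundary functional $G^\ve$ and ordering the limits so that the uncontrolled quantities $u^\ve|_{x=0}$ and $\partial_x u^\ve|_{x=0}$ enter only through the sign-definite combination $G^\ve$.
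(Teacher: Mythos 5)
Your proposal is correct and follows essentially the same route as the paper: the maximum principle and the $H^1$-type energy bound, compensated compactness for the bulk convergence, and, crucially, the same sign lemma — your $G^\ve(w)=\partial_u\mathcal F(w,v_-)\Phi(w)-p\mathcal Q(w,v_-)\ge0$ is exactly the paper's claim \eqref{eq:v2} with $\Phi=\mathfrak j$, proved by the identical derivative computation exploiting that $v_\pm$ are the roots of the boundary fluxes in $[0,1]$, followed by localization at the boundary through Vasseur's strong traces (Proposition \ref{prop:bd-trace}). The only cosmetic differences are that you keep the boundary functional at finite $\ve$ and test with explicit tent functions, whereas the paper discards it by its sign first and then localizes the resulting distributional inequality \eqref{eq:v3}, and that you make explicit the strong (a.e.) convergence from the Young-measure reduction, which the paper uses implicitly when passing nonlinear functions of $u^\ve$ to the limit.
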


\begin{proof}
We prove only for \eqref{eq:bd-wasep}.
For \eqref{eq:bd-wasep-slow} it suffices to take $\alpha$, $\beta$, $\gamma$ and $\delta=0$ in the result.
First, since $v_0(x)\in[0,1]$ and $v_\pm\in[0,1]$, standard argument shows that $u^\ve\in[0,1]$.
Observe also that $u^\ve$ satisfies the $H_1$-type estimate
\begin{align}
\label{eq:v1}
  \ve\int_0^T \!\! \int_0^1 \big(\partial_xu^\ve(t,x)\big)^2dx\,dt \le C, \quad \forall\,\ve>0.
\end{align}
Indeed, let $G$ be a primitive of $uJ'$: $G'(u)=uJ'(u)$.
Multiplying \eqref{eq:hl-wasep} by $u^\ve$,
\begin{align}
  \ve u^\ve\partial_x^2u^\ve=\frac12\partial_t \big(u^\ve\big)^2+p\partial_xG\big(u^\ve\big).
\end{align}
As $u^\ve$, $G(u^\ve)$ are bounded, the integration on $[0,T]\times[0,1]$ yields that
\begin{align}
  \left| \int_0^T \big(\ve u^\ve\partial_xu^\ve\big)\Big|_{x=0}^{x=1}\,dt-\int_0^T \!\! \int_0^1 \big(\partial_xu^\ve\big)^2dx\,dt \right| \le C.
\end{align}
With \eqref{eq:bd-wasep}, the first term above is bounded, then we obtain \eqref{eq:v1}.

Next, by the compensated compactness argument (see, e.g., \cite[Section 5.D]{Evans90}), $u^\ve$ converges, along proper subsequence, to some $u \in L^\infty([0,T]\times[0,1])$ in the weak-$\star$ topology.
It is also straightforward to verify conditions (\romannum1--\romannum2) in Definition \ref{defn:ent-sol} for $u$.

Hereafter, we focus on the boundary conditions and show that $u$ satisfies (\romannum3) in Definition \ref{defn:ent-sol} with $v_\pm$ in \eqref{eq:bd-asep}.
For any boundary entropy flux pair $(\mathcal F,\mathcal Q)$, we claim that
\begin{align}
\label{eq:v2}
  \partial_u\mathcal F(u,v_-)\mathfrak j(u)-p\mathcal Q(u,v_-)\ge0, \quad u\in[0,1],
\end{align}
where $\mathfrak j(u):=pJ(u)+(\alpha+\gamma)u-\alpha$.
To see this, denote $(f,q)(u)=(\mathcal F,\mathcal Q)(u,v_-)$, then $f$ is convex, $f(v_-)=f'(v_-)=0$ and $q(v_-)=0$, $q'=J'f'$, so that $(f'\mathfrak j-pq)(v_-)=0$.
Since $f'\mathfrak j'=f'(pJ'+\alpha+\gamma)=pq'+(\alpha+\gamma)f'$,
\begin{align}
  (f'\mathfrak j-pq)' = f''\mathfrak j+f'\mathfrak j'-pq' =f''\mathfrak j+(\alpha+\gamma)f'.
\end{align}
For $u\in[0,v_-)$, observe that $\mathfrak j(u)<0$ and $f'(u)<0$, thus $(f'\mathfrak j-pq)'(u)<0$.
Similarly, $(f'\mathfrak j-pq)'(u)>0$ for $u\in(v_-,1]$.
The claim then follows.

For a smooth function $\psi\ge0$ such that $\psi(0,x)=\psi(T,x)=\psi(t,1)=0$,
\begin{align}
  \int_0^T \!\! \int_0^1 f(u^\ve)\partial_t\psi\,dx\,dt = \int_0^T \!\! \int_0^1 f'(u^\ve)\big[p\partial_xJ(u^\ve)-\ve\partial_x^2u^\ve\big]\psi\,dx\,dt.
\end{align}
Since $f'(u)\partial_x^2u=\partial_x^2[f(u)]-f''(u)(\partial_xu)^2\le\partial_x^2[f(u)]$ and $f'J'=q'$,
\begin{align}
  \int_0^T \!\! \int_0^1 f(u^\ve)\partial_t\psi\,dx\,dt \ge \int_0^T \!\! \int_0^1 \big[p\partial_xq(u^\ve)-\ve\partial_x^2f(u^\ve)\big]\psi\,dx\,dt.
\end{align}
Performing integration by parts and taking \eqref{eq:bd-wasep} into consideration,
\begin{equation}
  \begin{aligned}
    \int_0^T \!\! \int_0^1 f(u^\ve)\partial_t\psi\,dx\,dt \ge &\int_0^T \!\! \int_0^1 \big[\ve\partial_xf(u^\ve)-pq(u^\ve)\big]\partial_x\psi\,dx\,dt\\
    &+ \int_0^T \big[f'(u^\ve)\mathfrak j(u^\ve)-pq(u^\ve)\big]\psi\,\big|_{x=0}\,dt.
  \end{aligned}
\end{equation}
Let $\ve\to0$ and notice that the integral of $\ve\partial_xf(u^\ve)\partial_x\psi$ vanishes due to \eqref{eq:v1}, while the last line is non-negative because of \eqref{eq:v2}.
Hence, the limit $u$ satisfies that
\begin{align}
\label{eq:v3}
  \int_0^T \!\! \int_0^1 \big[f(u)\partial_t\psi + pq(u)\partial_x\psi\big]\,dx\,dt \ge 0, \quad (f,q)=(\mathcal F,\mathcal Q)(\cdot,v_-).
\end{align}
Recall that $u$ satisfies the entropy inequality, Proposition \ref{prop:bd-trace} yields the existence of some $u_-$, such that $\|u(\cdot,x)-u_-\|_{L^1}$ as $x\to0$.
Note that $\psi$ does not vanish at $x=0$, \eqref{eq:v3} then means that $q(u_-)=\mathcal Q(u_-,v_-)\le0$ almost surely on $[0,T]$.
Similar argument works for $\mathcal Q(u_+,v_+)$, so that the conclusion follows.
\end{proof}

From \eqref{eq:bd-wasep} and Theorem \ref{thm:viscous}, the following conjecture is not a surprise.

\begin{conj}
\label{conj:critical}
For the case $\theta=0$, the hydrodynamic equation is given by \eqref{eq:cl} with $\nu_\pm=\nu_\pm(t)$ determined through \eqref{eq:bd-asep} with $(\alpha,\beta,\gamma,\delta)=(\alpha,\beta,\gamma,\delta)(t)$.
\end{conj}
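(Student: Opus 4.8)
The plan is to follow the two-part strategy used for Theorems~\ref{thm:fast} and~\ref{thm:slow}: establish the bulk entropy inequality and the initial condition through the local-equilibrium and compensated-compactness machinery of Sections~\ref{sec:local-equil}--\ref{sec:com-com}, and then identify the boundary data through Vasseur's traces and the Otto boundary conditions verified in Section~\ref{sec:ent-sol}. The genuinely new content is the characterisation of the boundary layer: for $\theta=0$ with general rates the reservoir current and the bulk drift both act on the time scale $n$, so neither the weak-reservoir argument of Theorem~\ref{thm:slow} nor the reversibility argument of Theorem~\ref{thm:fast} applies, and the boundary trace must settle at an intermediate value rather than at $\{0,1\}$ or at a reversible density.

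First I would treat the bulk. Since the boundary generator $n^{1+\theta}(L_{-,t}+L_{+,t})$ with $\theta=0$ is supported on the two sites $1$ and $n$, its contribution to the evolution of the empirical density tested against $\vf$ is of order one and localised at $x\in\{0,1\}$; hence for $\vf$ supported in the open interval $(0,1)$ it does not survive the limit $n\to\infty$. I therefore expect the bulk argument to carry over essentially unchanged: every weak-$\star$ limit point $u$ of the empirical density satisfies condition~(\romannum1) of Definition~\ref{defn:ent-sol} as a distribution on $(0,T)\times(0,1)$, together with the initial condition~(\romannum2), and by Proposition~\ref{prop:bd-trace} it admits strong boundary traces $u_\pm(t)$.

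The heart of the matter is condition~(\romannum3), that is, $\mathcal Q(u_-,v_-)\le0$ and $\mathcal Q(u_+,v_+)\ge0$ for the values $v_\pm$ in~\eqref{eq:bd-asep}; equivalently, by Proposition~\ref{prop:ent-sol}, the traces $u_\pm$ must lie in the admissible sets. A short computation shows that $v_-$ is exactly the \emph{flux-matching} fixed point: it solves $pJ(v_-)=\alpha(1-v_-)-\gamma v_-$, so that the macroscopic Burgers flux leaving the left boundary balances the net injection rate of the left reservoir, and symmetrically $pJ(v_+)=\beta v_+-\delta(1-v_+)$. This is precisely the vanishing-viscosity reading of~\eqref{eq:bd-wasep}, and it tells me which microscopic quantity to control. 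As in Section~\ref{sec:ent-sol}, I would reduce~(\romannum3) to a boundary integral inequality for a boundary entropy pair $(\mathcal F,\mathcal Q)$ and verify it by applying the generator to $\mathcal F(\cdot,v_-)$ evaluated on a short block average adjacent to site $1$. The reservoir part then produces $\partial_u\mathcal F(u,v_-)\,[\alpha(1-u)-\gamma u]$, which closes directly because it is linear in a single occupation variable, while the bulk part produces the discrete analogue of $p\partial_x\mathcal Q$ together with the nonlinear current $p\eta_1(1-\eta_2)$. Once the latter is replaced by $pJ$ of the block average, the convexity estimate~\eqref{eq:v2} already proved for Theorem~\ref{thm:viscous}, namely $\partial_u\mathcal F(u,v_-)\mathfrak j(u)-p\mathcal Q(u,v_-)\ge0$ with $\mathfrak j(u)=pJ(u)+(\alpha+\gamma)u-\alpha$, supplies the correct sign.

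The main obstacle is exactly this replacement of the nonlinear current $p\eta_1(1-\eta_2)$ up to the boundary, that is, a two-block (local-equilibrium) estimate valid at site $1$ and not only in the interior. For $\theta=0$ the reservoir sustains a density profile of microscopic width, so the occupation variables $\eta_1$, $\eta_2$ need not decorrelate there, and the logarithmic Sobolev inequality of~\cite{Yau97} that drives the interior estimate degenerates at the boundary. This is the same difficulty that confined~\cite{Baha12} to the special Liggett rates, for which a monotone coupling with the infinite system resolves the layer directly. Note that no concentration of the boundary density on $v_\pm$ is needed: once the current is closed against the local density, inequality~\eqref{eq:v2} delivers the correct sign for every value in $[0,1]$. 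Producing such a boundary current-closure for arbitrary time-dependent rates, in the absence of a usable coupling, is where I expect the essential new work to lie.
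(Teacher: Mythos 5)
You are attempting to prove a statement that the paper itself leaves as a conjecture: the paper contains no proof of it. Its only support there is Theorem \ref{thm:viscous}, a purely PDE-level vanishing-viscosity computation for \eqref{eq:hl-wasep} with the Robin-type condition \eqref{eq:bd-wasep}, together with the special case of Remark \ref{rem:liggett}, where the relation \eqref{eq:liggett} makes available the coupling of \cite{Baha12}. Your roadmap is consistent with that evidence: your identification of $v_\pm$ in \eqref{eq:bd-asep} as the flux-matching values, $pJ(v_-)=\alpha(1-v_-)-\gamma v_-$, equivalently $\mathfrak j(v_-)=0$, is correct, and transplanting the proof of Theorem \ref{thm:viscous} (inequality \eqref{eq:v2}, integration by parts, then Proposition \ref{prop:bd-trace} and Proposition \ref{prop:ent-sol}) to the microscopic level is indeed the natural strategy. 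But what you have written is a plan whose pivotal step you yourself leave open, so it is not a proof; and that step is not a deferred technicality — it is the whole content of the conjecture.

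Concretely, your argument needs to close \emph{simultaneously} the reservoir term $\alpha-(\alpha+\gamma)\eta_1$ and the bulk current $p\eta_1(1-\eta_2)$ against one and the same local density, to which \eqref{eq:v2} can then be applied. The paper's replacement machinery (Lemma \ref{lem:block}, built on the logarithmic Sobolev inequality of \cite{Yau97}, together with the Dirichlet-form bound of Lemma \ref{lem:dir}) controls only block averages of width $k=[n^{\kappa'}]$, $\kappa'<\kappa$, away from the boundary. At $\theta=0$ with general rates the reservoir acts at rate $n$, the same order as the drift, and sustains a boundary layer whose expected microscopic width is of order $n^{\kappa}$ sites (the scale at which $\sigma n^{1+\kappa}L_\ss$ balances $pnL_\ta$). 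This layer is invisible to the Vasseur trace $u_-$, since its macroscopic width $n^{\kappa-1}$ vanishes, yet it engulfs site $1$ and every block of width $k\ll n^{\kappa}$ adjacent to it. So even granting a boundary one/two-block estimate, the density you would close against is the value at the bottom of the layer, not the trace $u_-$, and \eqref{eq:v2} applied at that value does not deliver $\mathcal Q(u_-,v_-)\le0$; the selection of $v_\pm$ is really a statement about the stationary flux carried across the layer, i.e.\ the phase-diagram mechanism of \cite{DEHP93,PopkovS99}. In the cases the paper does prove (Theorem \ref{thm:slow}), this problem is bypassed rather than solved: there the reservoir flux near the boundary is $o(\ve n)$ uniformly (see the bound $C(n^\kappa+\ve n^{1+\theta}\|\alpha\|_{L^\infty}+\ve^2 n)$ in the proof of Lemma \ref{lem:weak-bd}), so one only needs $J(u_\pm)=0$ and no closure of $\eta_1$ at all, while under \eqref{eq:liggett} the coupling with the infinite system substitutes for boundary local equilibrium. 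Neither device exists for general time-dependent rates at $\theta=0$, which is precisely why the statement remains open.
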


\begin{rem}
\label{rem:liggett}
Besides the case (2) in Theorem \ref{thm:slow}, the conjecture is also proved for another choice of parameters: $\kappa=\theta=0$, $(\alpha,\beta,\gamma,\delta)$ does not evolve in time, and the following relation \cite{Liggett75,DEHP93,Baha12} is satisfied:
\begin{align}
\label{eq:liggett}
    \frac\alpha{p+\sigma} + \frac\gamma\sigma = 1, \quad \frac\beta{p+\sigma} + \frac\delta\sigma = 1.
\end{align}
Under \eqref{eq:liggett}, $v_\pm$ in \eqref{eq:bd-asep} are simply given by
\begin{align}
\tag{\ref{eq:bd-asep}'}
  v_-=\frac\alpha{p+\sigma}=1-\frac\gamma\sigma, \quad v_+=1-\frac\beta{p+\sigma}=\frac\delta\sigma.
\end{align}
Consider the ASEP on infinite lattice $\bZ$ with jump rate $p+\sigma$ to the right and $\sigma$ to the left.
Suppose that $\{\eta_i;i<1\}$, $\{\eta_i;i>n\}$ are respectively distributed according to Bernoulli measure with densities $v_-$ and $v_+$.
The dynamics with reservoirs defined through \eqref{eq:liggett} is the projection of this finite dynamics on $\{1,\ldots,n\}$.
Employing this observation, the hydrodynamic limit is proved in \cite{Baha12}.
\end{rem}

\subsection{Stationary solution}
\label{subsec:stat-sol}

In this subsection, we present some facts about the asymptotic solution $u_\infty := \lim_{t\to\infty} u(t,\cdot)$, where $u$ is the entropy solution to the hydrodynamic equation \eqref{eq:cl} with $v_-=0$, $v_+=1$.

First, observe that the stationary entropy solution $u(t,x)=\bar u(x)$ is not unique.
Indeed, they are given by the class of profiles with single upward shock:
\begin{align}
\label{eq:stat-ent-sol}
  \big\{\bar u_y \in L^\infty([0,1]); \ y\in[0,1]\big\}, \quad \bar u_y(x) := \mathbf1_{(y,1)}(x).
\end{align}
To investigate the limit, note that Dynkin's formula yields that for all $t>0$,
\begin{align}
\label{eq:stat1}
  \bE_n \left[ \frac1n\sum_{i=1}^n \eta_i(t) - \frac1n\sum_{i=1}^n \eta_i(0) - \int_0^t \frac1n\sum_{i=1}^n L_{n,s} [\eta_i(s)]ds \right] = 0.
\end{align}
Recalling \eqref{eq:generator}, the spatial average of $L_{n,t} [\eta_i]$ reads
\begin{align}
\label{eq:stat2}
  \frac1n\sum_{i=1}^n L_{n,t} [\eta_i] = n^\theta\Big[\alpha(t)-\big(\alpha(t)+\gamma(t)\big)\eta_1-\big(\beta(t)+\delta(t)\big)\eta_n+\delta(t)\Big].
\end{align}
For any fixed $T>0$, integrating \eqref{eq:stat1} in time and inserting \eqref{eq:stat2},
\begin{align}
  \bE_n \left[ \int_0^T \frac1n\sum_{i=1}^n \eta_i(t)dt \right] = T \times \bE_n \left[ \frac1n\sum_{i=1}^n \eta_i(0) \right] + O(n^\theta).
\end{align}
Consider the case $\theta<0$ and take the limit $n\to\infty$.
By \eqref{eq:assp-initial}, the right-hand side converges to $T\int v_0(x)dx$.
Therefore, by taking $f=\eta_1$ and\footnote{Here $\psi\not\in \cC_c(\bR^2)$, but it can be easily approximated by continuous functions.} $\psi\equiv\mathbf1_{[0,T]}$ in \eqref{eq:hl},
\begin{align}
  \int_0^T \!\! \int_0^1 u(t,x)dx\,dt = \lim_{n\to\infty} \bE_n \left[ \int_0^T \frac1n\sum_{i=1}^n \eta_i(s)ds \right] = T\int_0^1 v_0(x)dx.
\end{align}
In conclusion, $u$ satisfies strictly the conservation law:
\begin{align}
\label{eq:cl-amount}
  \int_0^1 u(t,x)dx = \int_0^1 v_0(x)dx := \mathfrak m, \quad \forall\,t>0.
\end{align}
From \eqref{eq:stat-ent-sol} and \eqref{eq:cl-amount}, it is reasonable to predict that $u_\infty=\bar u_{1-\mathfrak m}$.
This would be justified if one can prove that under the weak-$\star$ topology, $u(t,\cdot)$ converges to some $\bar u \in L^\infty([0,1])$.

\begin{rem}
For other boundary data, \eqref{eq:cl-amount} fails in general due to the discontinuities at the boundaries shown in \eqref{eq:ent-sol-4}.
\end{rem}

\subsection{Proof of Theorem \ref{thm:slow}}
\label{subsec:proof}

For $k \le n$, define the uniform average
\begin{align}
\label{eq:bareta}
  \bar\eta_{i,k}:=\frac1k\sum_{i'=0}^{k-1} \eta_{i-i'}, \quad \forall\,i=k, k+1, \ldots, n.
\end{align}
For $i=k$, ..., $n-k+1$, define the smoothly weighted average
\begin{align}
\label{eq:hateta}
  \hat\eta_{i,k}:=\frac1k\sum_{i'=0}^{k-1} \bar\eta_{i+i',k} = \sum_{i'=-k+1}^{k-1} w_{i'}\eta_{i-i'}, \quad w_{i'}=\frac{k-|i'|}{k^2}.
\end{align}
In the rest of this article, $k$ is chosen to be the mesoscopic scale such that
\begin{align}
\label{eq:assp-k}
  k = k_n = \big[n^{\kappa'}\big], \quad \kappa' \in \left( \min \left\{ 2\kappa-1,\frac{1+\kappa}3 \right\}, \kappa \right).
\end{align}
It is well defined since we require $\kappa\in(2^{-1},1)$ in Theorem \ref{thm:slow}.
Observe from this definition that $k \ll n^{(1+\kappa)/2}$ and $k \gg \sqrt n$.
These conditions are necessary in Proposition \ref{prop:local-equil} and Section \ref{sec:com-com}.
Now we state the proof of Theorem \ref{thm:slow}.

\begin{proof}[Proof of Theorem \ref{thm:slow}]
By the local equilibrium proved in Proposition \ref{prop:local-equil}, to verify \eqref{eq:hl} it suffices to show for $g:=\langle f \rangle$ and $\psi\in\cC_c(\bR^2)$ that
\begin{align}
  \lim_{n\to\infty} \bE_n \left[ \int_0^\infty \frac1n\sum_{i=k+1}^{n-k} g\big(\hat\eta_{i,k}\big)\psi \left( t,\frac in \right) dt \right] = \int_0^\infty \!\! \int_0^1 g(u)\psi\,dx\,dt.
\end{align}
Fix some arbitrary $T>0$ and denote $\Sigma_T=[0,T]\times[0,1]$.
Proposition \ref{prop:hl1} yields that there is a probability measure $\fQ$ on $L^\infty(\Sigma_T)$, such that along proper subsequence,
\begin{align}
\label{eq:hl1}
  \lim_{n\to\infty} \bE_n \left[ \int_0^T \frac1n\sum_{i=k+1}^{n-k} g\big(\hat\eta_{i,k}\big)\psi \left( t,\frac in \right) dt \right] = E^\fQ \left[ \iint_{\Sigma_T} g(\rho)\psi\,dx\,dt \right].
\end{align}

We want to show that $\fQ$ concentrates on the entropy solution to \eqref{eq:cl} with $v_-=0$, $v_+=1$.
To this end, first obtain the entropy inequality from Lemma \ref{lem:ent-ineq}:
\begin{equation}
\label{eq:p1}
  \fQ \left\{ \partial_t\big[f(\rho)\big] + p\partial_x\big[q(\rho)\big]\le0
  \ \bigg|\ 
  \begin{aligned}
    &\text{all Lax entropy}\\
    &\text{flux pair }(f,q)
  \end{aligned}
  \right\} =1,
\end{equation}
where the inequality holds in sense of distributions on $\Sigma_T$.
By Proposition \ref{prop:bd-trace}, this implies that $\rho$ has initial and boundary traces $\rho_0 \in L^\infty([0,1])$, $\rho_\pm \in L^\infty([0,T])$:
\begin{align}
\label{eq:bd-traces1}
  \esslim_{t\to0+} \rho(t,\cdot)=\rho_0,\quad \esslim_{x\to0+} \rho(\cdot,x)=\rho_-, \quad \esslim_{x\to1-} \rho(\cdot,x)=\rho_+,
\end{align}
where the limits hold $\fQ$-almost surely in $L^1$ topology.
In view of Definition \ref{defn:ent-sol} and Remark \ref{rem:slow}, it suffices to prove that $\rho_0=v_0$ and $\rho_\pm\in\{0,1\}$, $\fQ$-almost surely.

By taking $(f,q)=(\mathrm{id}, J)$ and $(f,q)=-(\mathrm{id},J)$ in \eqref{eq:p1}, we have $\fQ$-almost surely that, for any $\psi\in\cC(\Sigma_T)$ which vanishes at $\partial\Sigma_T$,
\begin{align}
\label{eq:p2}
  \iint_{\Sigma_T} \big[\rho(t,x)\partial_t\psi(t,x) + J\big(u(t,x)\big)\partial_x\psi(t,x)\big]dx\,dt = 0.
\end{align}
Since the initial trace $\rho_0$ exists, \eqref{eq:p2} together with Lemma \ref{lem:weak-sol} yields that $\rho_0=v_0$.
Similarly, we have from Lemma \ref{lem:weak-bd} that $0=\esslim_{x\to0} J(\rho)=J(\rho_-)$.
As $J=u(1-u)$, it follows that $\rho_-\in\{0,1\}$.
The condition for $\rho_+$ holds similarly.

In consequence, $\rho$ is the entropy solution on $[0,T]\times[0,1]$, $\fQ$-almost surely.
Finally, one can extend the result to $\bR_+\times[0,1]$ using the uniqueness of entropy solution.
\end{proof}

\section{Local equilibrium}
\label{sec:local-equil}

Recall the dynamics $\eta=\eta(t)$ generated by $L_{n,t}$ in \eqref{eq:generator}.
In this section, we prove the convergence of $\eta(t)$ to its local equilibrium, stated as the following proposition.

\begin{prop}
\label{prop:local-equil}
If $1 \ll k \ll n^{(1+\kappa)/2}$, then
\begin{align}
\label{eq:local-equil}
  \lim_{n\to\infty} \bE_n \left[ \int_0^\infty \frac1n \left| \sum_{i=0}^{n-\ell} f(\tau_i\eta)\psi \left( t,\frac in \right) - \sum_{i=k+1}^{n-k} \langle f \rangle\big(\hat\eta_{i,k}\big)\psi \left( t,\frac in \right) \right| dt \right]=0,
\end{align}
for any local observation $f=f(\eta_1,\ldots,\eta_\ell)$ and $\psi\in\cC_c(\bR^2)$.
\end{prop}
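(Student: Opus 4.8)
The plan is to prove Proposition \ref{prop:local-equil} by the relative entropy method of Yau \cite{Yau97}. The guiding mechanism is that the symmetric part $\sigma n^{1+\kappa}L_\ss$ of the generator \eqref{eq:generator} relaxes a block of $k$ consecutive sites on the time scale $k^2/n^{1+\kappa}$, which tends to $0$ precisely because $k\ll n^{(1+\kappa)/2}$; each mesoscopic block therefore reaches local equilibrium faster than the macroscopic density evolves, so $f(\tau_i\eta)$ may be replaced by the Bernoulli average $\langle f\rangle$ evaluated at the smoothed density $\hat\eta_{i,k}$ of \eqref{eq:hateta}. I would take the uniform product measure $\nu^n$ (Bernoulli of density $1/2$) as reference, write $f_t^n:=d\mu_{n,t}/d\nu^n$ for the density of the law of $\eta(t)$, and set $H_n(t):=H(\mu_{n,t}\,|\,\nu^n)$. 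Since $\Omega_n$ is finite and $\nu^n$ is uniform, $H_n(0)\le n\log2=O(n)$ automatically, so \eqref{eq:assp-initial} alone suffices.

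The first step is a dynamic bound on the symmetric Dirichlet form $D_\ss(\sqrt f):=\tfrac12\sum_i\int\big(\sqrt{f(\eta^{i,i+1})}-\sqrt{f(\eta)}\big)^2\,d\nu^n$. Differentiating $H_n(t)$ and using that $L_\ss$ is self-adjoint in $L^2(\nu^n)$, the entropy production inequality has leading dissipative term $-\sigma n^{1+\kappa}D_\ss(\sqrt{f_t^n})$. The non-reversible contributions must be absorbed into it: splitting $L_\ta=\tfrac12L_\ss+A$ into its $\nu^n$-symmetric and antisymmetric parts, the drift contribution of $pnL_\ta$ is controlled by Young's inequality through $\tfrac12\sigma n^{1+\kappa}D_\ss(\sqrt{f_t^n})$ plus a term of order $n^{2-\kappa}$ (the squared drift rate $(pn)^2$ per bond over the symmetric rate $\sigma n^{1+\kappa}$, summed over the $O(n)$ bonds), while the boundary generators $n^{1+\theta}(L_{-,t}+L_{+,t})$ act on single sites with $O(1)$ rates (here $\theta\le0$ enters) and contribute only $O(n)$. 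Integrating in time and dividing by $\sigma n^{1+\kappa}$ gives
\[
  \int_0^T D_\ss\big(\sqrt{f_t^n}\big)\,dt \le \frac{H_n(0)+CTn^{2-\kappa}}{\sigma n^{1+\kappa}} \le C\,n^{1-2\kappa},
\]
which vanishes exactly because $\kappa>1/2$. By convexity, the time average $\bar f_T^n:=T^{-1}\int_0^T f_t^n\,dt$ obeys $D_\ss(\sqrt{\bar f_T^n})\le C\,n^{1-2\kappa}$ as well.

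The second step reduces the replacement in \eqref{eq:local-equil} to a one-block estimate through the decomposition
\[
  f(\tau_i\eta)-\langle f\rangle(\hat\eta_{i,k}) = \big[f(\tau_i\eta)-\langle f\rangle_{\bar\eta_{i,k}}\big] + \big[\langle f\rangle_{\bar\eta_{i,k}}-\langle f\rangle(\bar\eta_{i,k})\big] + \big[\langle f\rangle(\bar\eta_{i,k})-\langle f\rangle(\hat\eta_{i,k})\big],
\]
where $\langle f\rangle_{\bar\eta_{i,k}}$ is the expectation of $f(\tau_i\,\cdot)$ under the canonical measure on the block of length $k$ ending at $i$ with prescribed density $\bar\eta_{i,k}$ of \eqref{eq:bareta}. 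The middle term is $O(1/k)$ uniformly by the equivalence of ensembles and vanishes as $k\to\infty$ (this is where $1\ll k$ is used); the last term is handled by summation by parts, since $\hat\eta_{i,k}$ is a $w$-weighted moving average of $\bar\eta_{\cdot,k}$ on scale $k\ll n$ while $\psi(t,i/n)$ varies on scale $n$, so that the Lipschitz bound on $\langle f\rangle$, together with the density-gradient control already furnished by $D_\ss(\sqrt{\bar f_T^n})$, leaves an error of order $k/n$. It remains to treat the one-block term, for which $V_{i,k}:=f(\tau_i\eta)-\langle f\rangle_{\bar\eta_{i,k}}$ has vanishing average under every canonical measure on the block. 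Passing from $\bE_n[\int_0^T\cdot\,dt]$ to $\bar f_T^n$ by the entropy inequality and then bounding the block marginal of $\bar f_T^n$ by the local Dirichlet form through Yau's logarithmic Sobolev inequality on a block of $k$ sites, whose constant is of order $k^2$ \cite{Yau97}, controls $\frac1n\sum_i\langle V_{i,k}\rangle_{\bar f_T^n}$ by a quantity that closes under $k^2\ll n^{1+\kappa}$, i.e. under $k\ll n^{(1+\kappa)/2}$, once combined with $D_\ss(\sqrt{\bar f_T^n})\to0$.

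The main obstacle is the first step. Unlike the reversible case, the ASEP current and the open boundaries break the stationarity of $\nu^n$, so the non-gradient contributions to $\partial_tH_n$ do not vanish and must be dominated by the symmetric dissipation $\sigma n^{1+\kappa}D_\ss$; this is exactly where $\kappa>1/2$ is forced, as the drift produces entropy at rate $O(n^{2-\kappa})$ and only a symmetric speed-up of order $n^{1+\kappa}$ with $\kappa>1/2$ leaves a net dissipation $n^{1-2\kappa}\to0$. The second delicate point, in the one-block estimate, is to use the logarithmic Sobolev inequality rather than the cruder spectral-gap estimate of the Guo--Papanicolaou--Varadhan scheme, so that the block relaxation time $k^2$ competes against the dynamical speed $n^{1+\kappa}$ with the sharp threshold $k\ll n^{(1+\kappa)/2}$, matching the hypothesis.
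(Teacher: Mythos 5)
Your plan has the same two-step architecture as the paper's proof: a time-integrated Dirichlet form bound extracted from entropy production (the paper's Lemma \ref{lem:dir}), followed by a one-block estimate with the logarithmic Sobolev constant $k^2$ of \cite{Yau97} (the paper's Lemma \ref{lem:block}). The gap is in your first step, and it is quantitative but fatal to the statement as claimed. By splitting $L_\ta$ into symmetric and antisymmetric parts and absorbing the drift via Young's inequality you pay $O(n^{2-\kappa})$ entropy production per unit time, hence only
\begin{align*}
  \int_0^T \fD_n(\mu_{n,t})\,dt \;\le\; C\,n^{1-2\kappa}.
\end{align*}
Feeding this into the LSI one-block estimate, whose error is of order $n^{-1}\big[k^2\int_0^T\fD_n\,dt + Tn/k\big]$, the first term is $O(k^2 n^{-2\kappa})$, which vanishes only when $k\ll n^{\kappa}$. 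Since $\kappa<1$ we have $n^{\kappa}\ll n^{(1+\kappa)/2}$, so your argument proves the proposition only on a strictly smaller range of $k$ than the hypothesis $1\ll k\ll n^{(1+\kappa)/2}$. Your closing assertion that the estimates close under $k^2\ll n^{1+\kappa}$ is inconsistent with your own Dirichlet bound: that threshold requires $\int_0^T\fD_n\,dt=O(n^{-\kappa})$, not $O(n^{1-2\kappa})$. (For the paper's eventual choice $k=n^{\kappa'}$ with $\kappa'<\kappa$ in \eqref{eq:assp-k} your weaker range would incidentally suffice for Theorem \ref{thm:slow}, but it does not prove the proposition as stated.)

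The fix is the paper's sharper treatment in Lemma \ref{lem:dir}: do not separate the antisymmetric part at all. Apply the pointwise inequality $x(\log y-\log x)\le 2\sqrt x(\sqrt y-\sqrt x)$ to the full generator — legitimate because the rates of $pL_\ta$ are nonnegative — so that
\begin{align*}
  2\sum_{\eta}\sqrt{\mu}\,pL_\ta\big[\sqrt{\mu}\,\big] \;=\; -p\,\fD_n(\mu) + p\,E_\mu[\eta_n-\eta_1],
\end{align*}
i.e.\ the asymmetric part contributes its own nonpositive Dirichlet-type term plus a telescoping stationarity defect bounded by $p$. The drift therefore produces entropy at rate $O(n)$ in process time, not $O(n^{2-\kappa})$, which yields $\int_0^T\fD_n\,dt\le Cn^{-\kappa}$ and closes the one-block estimate exactly in the claimed range, with error $C(k^2n^{-1-\kappa}+k^{-1})$. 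Two incidental remarks: first, $\kappa>1/2$ is not forced by this proposition (in the paper it enters only through the choice of $k$ in \eqref{eq:assp-k} needed in Section \ref{sec:com-com}); in your scheme it appears as an artifact of the lossy drift bound. Second, your claim that the smoothing error $\langle f\rangle(\bar\eta_{i,k})-\langle f\rangle(\hat\eta_{i,k})$ is of order $k/n$ is not correct pointwise — the two averages differ by $O(1)$ deterministically, and $\langle f\rangle$ is nonlinear, so this term must also pass through the Dirichlet-form/LSI machinery, as the paper does by rerunning Lemma \ref{lem:block} with $f=\eta_1$.
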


\begin{proof}
For $f=f(\eta_1,\ldots,\eta_\ell)$ and $i=k$, ..., $n-k$, define
\begin{align}
  f_{i,k}=f_{i,k}(\eta_{i-k+1},\ldots,\eta_{i+k-1}):=\frac1{2k-\ell}\sum_{i'=-k}^{k-\ell-1} f(\tau_{i+i'}\eta).
\end{align}
We can substitute $f(\tau_i\eta)$ to $f_{i,k}$ in \eqref{eq:local-equil} with an error uniformly bounded by $Ckn^{-1}$.

Denote $g(\rho):=\langle f \rangle(\rho)$.
Without loss of generality, suppose for some $t_0>0$ that $\psi(t,\cdot)\equiv0$ for $t>t_0$.
Applying \CS inequality,
\begin{equation}
\label{eq:local-equil1}
  \begin{aligned}
    &\bE_n \left[ \int_0^\infty \frac1n \left| \sum_{i=k+1}^{n-k} \big[f_{i,k}-g\big(\hat\eta_{i,k}\big)\big]\psi \left( t,\frac in \right) \right| dt \right]^2\\
    \le\;&\frac1{n^2}\,\bE_n \left[ \int_0^{t_0} \sum_{i=k+1}^{n-k} \big|f_{i,k}-g\big(\hat\eta_{i,k}\big)\big|^2dt \right] \int_0^{t_0} \sum_{i=k+1}^{n-k} \psi \left(t, \frac in \right)^2dt.
  \end{aligned}
\end{equation}
Let $\mu_{n,t}$ be the distribution of $\eta(t)$.
For a probability measure $\mu$ on $\Omega_n$, define the Dirichlet form associated to the symmetric exclusion dynamics as
\begin{align}
\label{eq:dir}
  \fD_n(\mu):=\frac12\sum_{\eta\in\Omega_n} \sum_{i=1}^{n-1} \left( \sqrt{\mu(\eta^{i,i+1})}-\sqrt{\mu(\eta)} \right)^2.
\end{align}
By Lemma \ref{lem:block} and \ref{lem:dir} below, the right-hand side in \eqref{eq:local-equil1} is bounded from above by
\begin{align}
  \frac Cn\int_0^{t_0} \left[ k^2\fD_n(\mu_{n,t}) + \frac nk \right]dt \int_0^{t_0} \!\! \int_0^1 \psi^2dx\,dt \le C_\psi \left( \frac{k^2}{n^{1+\kappa}}+\frac1k \right).
\end{align}
The conclusion then follows.
\end{proof}

\begin{lem}
\label{lem:block}
For any probability measure $\mu$ on $\Omega_n$ and $f=f(\eta_1,\ldots,\eta_\ell)$,
\begin{align}
  \sum_{\eta\in\Omega_n} \sum_{i=k}^{n-k} \big|f_{i,k}-\langle f \rangle\big(\hat\eta_{i,k}\big)\big|^2\mu(\eta) \le C \left[ k^2\fD_n(\mu)+\frac nk \right].
\end{align}
\end{lem}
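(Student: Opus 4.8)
The plan is to treat this as an $L^2$ one-block estimate. Writing $E_\mu[\cdot]=\sum_\eta(\cdot)\mu(\eta)$ and $g_i:=f_{i,k}-\langle f\rangle(\hat\eta_{i,k})$, the left-hand side is $\sum_{i=k}^{n-k}E_\mu[g_i^2]$, and both $f_{i,k}$ and $\hat\eta_{i,k}$ are functions of the $N:=2k-1$ coordinates in the block $B_i:=\{i-k+1,\dots,i+k-1\}$. For each $i$ I would condition on the number $K_i$ of particles in $B_i$: letting $\langle\cdot\rangle_c$ denote expectation under the canonical (uniform) measure $\nu_{B_i,K_i}$ on $B_i$ at fixed particle number, I split
\[
  E_\mu[g_i^2]=E_\mu\big[\langle g_i^2\rangle_c\big]+E_\mu\big[g_i^2-\langle g_i^2\rangle_c\big].
\]
The first term is the local-equilibrium contribution and should produce the $n/k$ on the right; the second is the non-equilibrium contribution and should be absorbed into $k^2\fD_n(\mu)$.

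For the first term I would invoke the equivalence of ensembles on the segment $B_i$. Since $f_{i,k}$ is the empirical average of $2k-\ell$ translates of $f$, its canonical expectation satisfies $\langle f_{i,k}\rangle_c=\langle f\rangle(K_i/N)+O(k^{-1})$ and its canonical variance is $O(k^{-1})$ (the translates are only weakly correlated under $\nu_{B_i,K_i}$); moreover the tent-weighted density $\hat\eta_{i,k}$ has canonical mean $K_i/N$ and canonical fluctuation $O(k^{-1/2})$, so that, $\langle f\rangle$ being Lipschitz, $\langle f\rangle(\hat\eta_{i,k})=\langle f\rangle(K_i/N)+O(k^{-1/2})$ in $L^2(\nu_{B_i,K_i})$. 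Combining these gives $\langle g_i^2\rangle_c\le C/k$ uniformly in $K_i$ and $\eta$, whence $\sum_i E_\mu[\langle g_i^2\rangle_c]\le Cn/k$.

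For the second term I would use that $g_i^2-\langle g_i^2\rangle_c$ has vanishing canonical mean on every fibre, hence lies in the range of the symmetric exclusion generator on $B_i$ and admits the variational (spectral-gap duality) bound
\[
  E_\mu\big[g_i^2-\langle g_i^2\rangle_c\big]\le \frac{A}{2}\,\big\|g_i^2-\langle g_i^2\rangle_c\big\|_{-1,B_i}^2+\frac{C}{A}\,\fD_{B_i}(\mu),\qquad A>0,
\]
where $\|\cdot\|_{-1,B_i}$ is the $H_{-1}$ norm relative to $\nu_{B_i,K_i}$ and $\fD_{B_i}(\mu)$ is the restriction of $\fD_n(\mu)$ to the bonds inside $B_i$. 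Choosing $A\asymp 1/k$ and summing over $i$, and using that each bond lies in at most $2k$ of the blocks $B_i$ so that $\sum_i\fD_{B_i}(\mu)\le 2k\,\fD_n(\mu)$, the Dirichlet term contributes $Ck^2\fD_n(\mu)$, exactly as required.

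The main obstacle is the $H_{-1}$ estimate: to make the above work one must show $\|g_i^2-\langle g_i^2\rangle_c\|_{-1,B_i}^2=O(1)$, so that with $A\asymp 1/k$ the first term also sums to $O(n/k)$. The naive bound $\|h\|_{-1,B_i}^2\le(\mathrm{gap})^{-1}\mathrm{Var}_c(h)\le CN^2\mathrm{Var}_c(h)$ only yields $O(N^2/k)=O(k)$ per block, hence $O(n)$ after summation — too weak, since in Proposition \ref{prop:local-equil} this quantity is divided by $n$ and must still vanish. To recover the missing factor of $k$ one has to exploit that $g_i$ is built from averages over a window of length $\asymp k$ together with the tent weights in $\hat\eta_{i,k}$: the recentred square $g_i^2-\langle g_i^2\rangle_c$ carries no slow (density) mode and is spread over the whole block, so its $H_{-1}$ norm should be $O(1)$ rather than $O(k)$. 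Making this precise — e.g. by constructing an explicit flow that realises $g_i^2-\langle g_i^2\rangle_c$ as a sum of bond gradients of total weight $O(1)$, or through a sharp spectral decomposition on the segment — is the technical heart of the argument; the equivalence-of-ensembles and overlap-counting steps are then routine.
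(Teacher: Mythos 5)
Your proposal has a genuine gap, and you have named it yourself: the entire non-equilibrium part of your argument rests on the claim $\|g_i^2-\langle g_i^2\rangle_c\|_{-1,B_i}^2=O(1)$, which you do not prove and only motivate heuristically. This is not a routine technicality. The recentred square $g_i^2-\langle g_i^2\rangle_c$ is not a spatial average of local canonical-mean-zero functions (it is the square of such an average composed with a function of the smoothed density), so none of the standard $H_{-1}$ bounds for averages apply to it, and the naive spectral-gap bound loses exactly the factor $k$ you need; there is no evident "explicit flow" realising it with $O(1)$ cost. In addition, even the variational inequality you start from is imprecise as stated: the natural $H_{-1}$ duality pairs $h$ with the Dirichlet form of the Radon--Nikodym density $d\mu_{i,k}(\cdot|\rho_*)/d\nu$, not with $\fD_{B_i}(\mu)$, which is the Dirichlet form of the \emph{square root} of that density; passing from one to the other requires representing $h$ as $-Lg$ with an $L^\infty$ control on the bond gradients of $g$, which again is unavailable for $g_i^2-\langle g_i^2\rangle_c$.

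The paper avoids this obstruction altogether. After the same conditioning on the particle number in the block, it does not use spectral-gap duality at all: it applies the relative entropy inequality
\begin{align}
  \sum_{\eta\in\Omega_{k,\rho_*}} \mathcal F^2\,\mu_{i,k}(\eta|\rho_*) \le \frac{H_{i,k}(\rho_*)}a + \frac1a\log\sum_{\eta\in\Omega_{k,\rho_*}} e^{a\mathcal F^2}\,\nu_k(\eta|\rho_*),
\end{align}
bounds the conditional relative entropy $H_{i,k}(\rho_*)$ by $C_\mathrm{LS}k^2$ times the conditional Dirichlet form via Yau's logarithmic Sobolev inequality, and controls the exponential moment under the canonical measure by a Hoeffding-type concentration bound (splitting the translates of $f$ into independent groups and transferring from the Bernoulli to the canonical measure by an elementary equivalence-of-ensembles estimate on marginals). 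Choosing $a\asymp k/\ell$ then yields the two terms $k^2\fD_n(\mu)$ and $n/k$ simultaneously, with the overlap-counting of bonds over blocks being the only summation step. The lesson is that the entropy/LSI route converts the hard $H_{-1}$ problem into an exponential moment estimate for which classical concentration suffices; if you want to complete your argument along your own lines, you would need a genuinely new $H_{-1}$ estimate whose validity is itself doubtful.
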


\begin{proof}
Define $I_k := \{i/(2k-1);i=0,1,\ldots,2k-1\}$.
For $\rho_* \in I_k$, let
\begin{align}
\Omega_{k,\rho_*} := \left\{\eta=(\eta_1,\eta_2,\dots,\eta_{2k-1}) \in \Omega_{2k-1}~\bigg|~\sum_{i=1}^{2k-1} \frac{\eta_i}{2k-1} = \rho_*\right\}.
\end{align}
Let $\mu_{i,k}$ be the projection of $\mu$ on $\{\eta_{i-k+1},\ldots,\eta_{i+k-1}\}$.
For $\rho_* \in I_k$, define $\bar\mu_{i,k}(\rho_*):=\mu_{i,k}(\Omega_{k,\rho_*})$ and $\mu_{i,k}(\cdot\,|\,\rho_*):=\mu_{i,k}(\cdot\,|\,\Omega_{k,\rho_*})$.
Also let $\nu_k(\,\cdot\,|\rho_*)$ be the uniform measure on $\Omega_{k,\rho_*}$.
With $g=\langle f \rangle$, $|f_{i,k}-\langle f \rangle(\hat\eta_{i,k})|^2$ is bounded from above by
\begin{align}
\label{eq:block0}
   2\big|f_{i,k}-g(\bar\eta_{i+k-1,2k-1})\big|^2+2\big|g(\bar\eta_{i+k-1,2k-1})-g\big(\hat\eta_{i,k}\big)\big|^2,
\end{align}
where $\bar\eta_{i,k}$ is defined in \eqref{eq:bareta}.

We estimate the two terms in \eqref{eq:block0} respectively.
For the first term, note that
\begin{equation}
\label{eq:block1}
  \begin{aligned}
    \mathcal I_i &:= \sum_{\eta\in\Omega_n} \big|f_{i,k}-g(\bar\eta_{i+k-1,2k-1})\big|^2\mu(\eta)\\
    &= \sum_{\eta\in\Omega_{2k-1}} \mathcal F^2\mu_{i,k}(\eta) = \sum_{\rho_* \in I_k} \bar\mu_{i,k}(\rho_*)\sum_{\eta\in\Omega_{k,\rho_*}} \mathcal F^2\mu_{i,k}(\eta|\rho_*),
  \end{aligned}
\end{equation}
where $\mathcal F=\mathcal F(\eta_1,\ldots,\eta_{2k-1})$ is given by
\begin{align}
  \mathcal F := \frac1{2k-\ell}\sum_{i=0}^{2k-\ell} f(\tau_i\eta) - g \left( \frac1{2k-1}\sum_{i=1}^{2k-1} \eta_i \right).
\end{align}
By the relative entropy inequality, for all $\rho_* \in I_k$ and $a>0$,
\begin{equation}
\label{eq:block2}
  \begin{aligned}
    \sum_{\eta\in\Omega_{k,\rho_*}} \mathcal F^2\mu_{i,k}(\eta|\rho_*) \le \frac{H_{i,k}(\rho_*)}a + \frac1a\log\sum_{\eta\in\Omega_{k,\rho_*}} e^{a\mathcal F^2}\nu_k(\eta|\rho_*),
\end{aligned}
\end{equation}
where the relative entropy $H_{i,k}$ is defined as
\begin{align}
\label{eq:rel-ent}
  H_{i,k}(\rho_*) := \sum_{\eta\in\Omega_{k,\rho_*}} \log \left[ \frac{\mu_{i,k}(\eta|\rho_*)}{\nu_k(\eta|\rho_*)} \right] \mu_{i,k}(\eta|\rho_*).
\end{align}
The logarithmic Sobolev inequality for simple exclusion \cite{Yau97} yields that there is a universal constant $C_\mathrm{LS}$, such that
\begin{align}
  H_{i,k}(\rho_*) \le C_\mathrm{LS}k^2\sum_{\eta\in\Omega_{k,\rho_*}}
\sum_{i'=1}^{2k-2} \left(\sqrt{\mu_{i,k}(\eta^{i',i'+1}|\rho_*)}
- \sqrt{\mu_{i,k}(\eta|\rho_*)}\right)^2.
\end{align}
Note that for $\eta\in\Omega_{k,\rho_*}$, $\bar\mu_{i,k}(\rho_*)\mu_{i,k}(\eta|\rho_*)=\mu_{i,k}(\eta)$, therefore
\begin{equation}
  \begin{aligned}
    &\sum_{\rho_* \in I_k} \bar\mu_{n,t}^{i,k}(\rho_*) \sum_{\eta\in\Omega_{k,\rho_*}} \sum_{i'=1}^{2k-2} \left(\sqrt{\mu_{i,k}(\eta^{i',i'+1}|\rho_*)} - \sqrt{\mu_{i,k}(\eta|\rho_*)}\right)^2\\
    = &\sum_{\eta\in\Omega_{2k-1}} \sum_{i'=1}^{2k-2} \left(\sqrt{\mu_{i,k}(\eta^{i',i'+1})} - \sqrt{\mu_{i.k}(\eta)}\right)^2\\
    \le &\sum_{\eta\in\Omega_n} \sum_{i'=i-k+1}^{i+k-2} \left( \sqrt{\mu(\eta^{i',i'+1})} - \sqrt{\mu(\eta)} \right)^2.
  \end{aligned}
\end{equation}
Therefore, from \eqref{eq:block1} and \eqref{eq:block2} we obtain that
\begin{equation}
  \begin{aligned}
    \sum_{i=k}^{n-k} \mathcal I_i \le\:&\frac{C_\mathrm{LS}k^2}a\sum_{i=k}^{n-k} \sum_{\eta\in\Omega_n} \sum_{i'=i-k+1}^{i+k-2} \left( \sqrt{\mu(\eta^{i',i'+1})} - \sqrt{\mu(\eta)} \right)^2\\
    &+ \frac1a\sum_{i=k}^{n-k} \sum_{\rho_* \in I_k} \bar\mu_{i,k}(\rho_*) \log\sum_{\eta\in\Omega_{k,\rho_*}} e^{a\mathcal F^2}\nu^k(\eta|\rho_*)\\
    \le\:&\frac{Ck^3}a\fD_n(\mu) + \frac na\sup_{\rho_* \in I_k} \bigg\{ \log\sum_{\eta\in\Omega_{k,\rho_*}} e^{a\mathcal F^2}\nu^k(\eta|\rho_*) \bigg\}.
  \end{aligned}
\end{equation}
The estimate then follows if we can find constants $c$, $C$ such that 
\begin{align}
\label{eq:exp}
  \log\sum_{\eta\in\Omega_{k,\rho_*}} e^{a\mathcal F^2}\nu^k(\eta|\rho_*) \le C, \quad \forall\,a < \frac{ck}\ell.
\end{align}

We are left with the proof of \eqref{eq:exp}.
Notice that $\nu_k(\,\cdot\,|\rho_*)$ is the conditional measure of the Bernoulli measure $\nu_\rho$ on $\Omega_{k,\rho_*}$ for $\rho\in(0,1)$.
Without loss of generality, assume that $f \in [0,1]$.
Hoeffding's lemma yields that
\begin{align}
  \log\sum_{\eta\in\Omega_{2k-1}} e^{a[f-g(\rho)]}\nu_\rho(\eta) \le \frac{a^2}8, \quad \forall\,a \in \bR.
\end{align}
By splitting the family $\{f(\tau_i\eta),i=0,\ldots,2k-\ell-1\}$ into independent groups and applying the generalized \Hol inequality,
\begin{align}
  \log\sum_{\eta\in\Omega_{2k-1}} \exp \left\{ a \left[ \frac1{2k-\ell}\sum_{i=0}^{2k-\ell-1} f(\tau_i\eta) - g(\rho) \right] \right\} \nu_\rho(\eta) \le \frac{\ell a^2}{8(2k-\ell)}.
\end{align}
Standard argument then shows that if $a \le \ell^{-1}(2k-\ell)$,
\begin{align}
  \log\sum_{\eta\in\Omega_{2k-1}} \exp \left\{ a\bigg|\frac1{2k-\ell}\sum_{i=0}^{2k-\ell-1} f(\tau_i\eta) - g(\rho)\bigg|^2 \right\} \nu_\rho(\eta) \le 3.
\end{align}
To obtain \eqref{eq:exp}, it suffices to replace $\nu_\rho$ with $\nu^k(\,\cdot\,|\rho_*)$. 
This step follows from the elementary estimate that
$\nu^k\big(\eta|_\Gamma = \tilde\eta\,\big|\,\rho_*\big) \le C\nu_{\rho_*}(\eta|_\Gamma = \tilde\eta)$ for any subset $\Gamma \subseteq \{1,\ldots,2k-1\}$ such that $|\Gamma| \le k$.

For the second term in \eqref{eq:block0}, we only need to observe that
\begin{align}
  \big|g(\bar\eta_{i+k-1,2k-1})-g\big(\hat\eta_{i,k}\big)\big| \le C\big|\bar\eta_{i+k-1,2k-1}-\hat\eta_{i,k}\big|.
\end{align}
The same upper bound holds by repeating the procedure with $f=\eta_1$.
\end{proof}

\begin{lem}
\label{lem:dir}
For any $t_0>0$, there exists a constant $C(t_0)$, such that
\begin{align}
  \int_0^{t_0} \fD_n(\mu_{n,t})dt \le C(t_0)n^{-\kappa}.
\end{align}
\end{lem}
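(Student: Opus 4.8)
The plan is to run the standard relative-entropy dissipation argument with the uniform measure as reference, exploiting the fact that the symmetric part carries the largest prefactor $\sigma n^{1+\kappa}$. Let $\nu$ be the uniform probability on $\Omega_n$, so that $\nu(\eta)=2^{-n}$ and $\nu$ is reversible for $L_\ss$, and set $H_n(t):=\sum_{\eta\in\Omega_n}\mu_{n,t}(\eta)\log\big(\mu_{n,t}(\eta)/\nu(\eta)\big)$. Choosing $\nu$ uniform is convenient for two reasons: first, $0\le H_n(t)\le\log|\Omega_n|=n\log2$ for every $t$, so $H_n$ is automatically $O(n)$; second, $\log\nu$ is constant, which makes the bulk drift term telescope. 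Writing $L_{n,t}^\dagger$ for the adjoint with respect to the counting measure and using $\sum_\eta(L_{n,t}^\dagger\mu_{n,t})(\eta)=0$,
\begin{equation}
  \frac{d}{dt}H_n(t)=\sum_{\eta\in\Omega_n}\big(L_{n,t}^\dagger\mu_{n,t}\big)(\eta)\,\log\mu_{n,t}(\eta),
\end{equation}
and I would bound the three contributions coming from $\sigma n^{1+\kappa}L_\ss$, $pnL_\ta$ and $n^{1+\theta}(L_{-,t}+L_{+,t})$ separately. For $t>0$ the density $\mu_{n,t}$ is strictly positive on the reachable sectors by irreducibility of $L_\ss$, so the logarithms are well defined; one runs the computation on $[\ve,t_0]$ and lets $\ve\to0$ at the end.

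For the symmetric part, symmetrising over the involution $\eta\leftrightarrow\eta^{i,i+1}$ gives
\begin{equation}
  \sum_{\eta}(L_\ss^\dagger\mu)(\eta)\log\mu(\eta)=-\frac12\sum_{\eta}\sum_{i=1}^{n-1}\big(\mu(\eta^{i,i+1})-\mu(\eta)\big)\big(\log\mu(\eta^{i,i+1})-\log\mu(\eta)\big),
\end{equation}
and the elementary inequality $(b-a)(\log b-\log a)\ge4(\sqrt b-\sqrt a)^2$ together with \eqref{eq:dir} turns this into $-c_0\fD_n(\mu)$ for a universal constant $c_0>0$. Hence this term contributes $-c_0\sigma n^{1+\kappa}\fD_n(\mu_{n,t})$, the negative dissipation I want to harvest.

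The crucial, and genuinely delicate, step is the asymmetric bulk term: a naive Cauchy--Schwarz/Young bound against the Dirichlet form would only produce an error of order $n^{2-\kappa}$, which for $\kappa<1$ is too large to yield the claim. Instead I would use $a\log(b/a)\le b-a$. After relabelling one edge at a time,
\begin{align}
  \sum_{\eta}(L_\ta^\dagger\mu)(\eta)\log\mu(\eta)&=\sum_{\eta}\sum_{i=1}^{n-1}\eta_i(1-\eta_{i+1})\,\mu(\eta)\big(\log\mu(\eta^{i,i+1})-\log\mu(\eta)\big)\\
  &\le\sum_{\eta}\sum_{i=1}^{n-1}\eta_i(1-\eta_{i+1})\big(\mu(\eta^{i,i+1})-\mu(\eta)\big).
\end{align}
The right-hand side is a discrete divergence that telescopes: with $\rho_i(t):=\sum_\eta\eta_i\mu_{n,t}(\eta)$ it equals $\sum_{i=1}^{n-1}(\rho_{i+1}-\rho_i)=\rho_n-\rho_1\in[-1,1]$. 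Thus the asymmetric bulk contributes at most $pn$. An identical one-site computation for $L_{-,t}$ and $L_{+,t}$ (with $c_-(\eta)=\alpha(1-\eta_1)+\gamma\eta_1$ the flip rate at site $1$, one has $c_-(\eta^1)-c_-(\eta)=(2\eta_1-1)(\alpha-\gamma)$, and similarly at site $n$) bounds the boundary contribution by $Cn^{1+\theta}\le Cn$, using $\theta\le0$ and the $L^\infty$ bounds on $\alpha,\beta,\gamma,\delta$. This telescoping is where the uniform reference and the gradient structure of the current are essential, and it is the main obstacle.

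Collecting the three bounds gives $\frac{d}{dt}H_n(t)\le-c_0\sigma n^{1+\kappa}\fD_n(\mu_{n,t})+Cn$. Integrating over $[\ve,t_0]$ and using $0\le H_n\le n\log2$,
\begin{equation}
  c_0\sigma n^{1+\kappa}\int_\ve^{t_0}\fD_n(\mu_{n,t})\,dt\le H_n(\ve)+Cnt_0\le C(1+t_0)\,n.
\end{equation}
Letting $\ve\to0$ by monotone convergence and dividing by $c_0\sigma n^{1+\kappa}$ yields $\int_0^{t_0}\fD_n(\mu_{n,t})\,dt\le C(t_0)n^{-\kappa}$, as claimed.
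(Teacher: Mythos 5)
Your proposal is correct and takes essentially the same route as the paper's proof: both differentiate an $O(n)$-bounded entropy along the Kolmogorov equation, extract the negative Dirichlet form of order $\sigma n^{1+\kappa}$ from the symmetric part, and reduce the asymmetric bulk term to $O(n)$ by telescoping the current ($\rho_n-\rho_1$ in your notation, $pE_{\mu_{n,t}}[\eta_n-\eta_1]$ in the paper's) and the boundary terms to $O(n^{1+\theta})$, before integrating in time and dividing by $\sigma n^{1+\kappa}$. The only cosmetic differences are your choice of the uniform reference measure (which shifts the entropy by the constant $n\log 2$) and your use of $a\log(b/a)\le b-a$ where the paper uses $x(\log y-\log x)\le 2\sqrt x(\sqrt y-\sqrt x)$, which is the same bound with an additional, unneeded, negative square term.
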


\begin{proof}
For a probability measure $\mu$ on $\Omega_n$, its entropy is defined by $H(\mu):=\sum_\eta \mu(\eta)\log\mu(\eta)$, cf. \eqref{eq:rel-ent}.
Standard manipulation with Kolmogorov equation gives that
\begin{align}
  \frac1n\frac d{dt} H(\mu_{n,t}) = \sum_{\eta\in\Omega_n} \mu_{n,t}(\eta) \big( n^{-1}L_{n,t} \big) \big[\log\mu_{n,t}(\eta)\big].
\end{align}
Recall that $n^{-1}L_{n,t}=pL_\ta+\sigma n^\kappa L_\ss+n^\theta L_{-,t}+n^\theta L_{+,t}$.
Exploiting the inequality $x(\log y-\log x)\le2\sqrt x(\sqrt y-\sqrt x)$ for all $x$, $y>0$, we have
\begin{equation}
  \begin{aligned}
    \sum_\eta \mu_{n,t}\big(pL_\ta+\sigma n^\kappa L_\ss\big) [\mu_{n,t}] \le 2\sum_\eta \sqrt{\mu_{n,t}}\big(pL_\ta+\sigma n^\kappa L_\ss\big) \big[\sqrt{\mu_{n,t}}\big]\\
    = -\big(p+2\sigma n^\kappa\big)\fD_n(\mu_{n,t})+pE_{\mu_{n,t}} [\eta_n-\eta_1].
  \end{aligned}
\end{equation}
For the boundary operators,
\begin{equation}
  \begin{aligned}
    &\sum_\eta \mu_{n,t}\big(n^\theta L_{-,t}\big)[\mu_{n,t}] \le 2n^\theta\sum_\eta \sqrt{\mu_{n,t}} L_{-,t} \big[\sqrt{\mu_{n,t}}\big]\\
    =\,&-n^\theta\sum_\eta \big[\alpha(t)(1-\eta_1)+\gamma(t)\eta_1\big] \left( \sqrt{\mu_{n,t}(\eta^1)}-\sqrt{\mu_{n,t}(\eta)} \right)^2\\
    &+n^\theta\big[\alpha(t)-\gamma(t)\big]\sum_\eta (1-2\eta_1)\mu_{n,t}(\eta).
  \end{aligned}
\end{equation}
The same calculation is applicable on $L_{+,t}$.
Since $\alpha$, $\beta$, $\gamma$, $\delta\in L^\infty$,
\begin{align}
\label{eq:d1}
  \frac1n\frac d{dt}H(\mu_{n,t}) \le -\left( \frac p2+\sigma n^\kappa \right) \fD_n(\mu_{n,t})+C\big(1+n^\theta\big).
\end{align}
As $\theta\le0$ and $H(\mu_{n,0})=O(n)$, the proof is completed by integrating in time.
\end{proof}

\begin{rem}
\label{rem:separ1}
For the generator defined in Remark \ref{rem:separ}, the last term in \eqref{eq:d1} becomes $C(1+n^{\theta_{-,1}}+n^{\theta_{-,2}})$, which does not change the argument essentially.
\end{rem}

\section{Compensated compactness}
\label{sec:com-com}

The aim of this section is to prove that $\hat\eta_{i,k}$ converges weakly to some measure $\fQ$ in sense of \eqref{eq:hl1}.
Hereafter we choose some arbitrary $T>0$ and restrict our argument to $[0,T]\times[0,1]$ to avoid additional difficulties in compactness.

Let $\Sigma_T=[0,T]\times[0,1]$.
By a Young measure on $\Sigma_T$ taking values from $[0,1]$ we mean a family $\{\nu_{t,x};(t,x)\in\Sigma_T\}$ of probability measures on $[0,1]$, such that the mapping $(t,x)\mapsto \int f(t,x,y)\nu_{t,x}(dy)$ is measurable for all $f\in\cC(\Sigma_T\times[0,1])$.
Let $\mathcal Y$ be the space of all Young measures, endowed with the vague topology: $\nu^m\to\nu$ if and only if
\begin{align}
  \lim_{m\to\infty} \iint_{\Sigma_T} \psi(t,x) \left[ \int f\,d\nu_{t,x}^m \right] dx\,dt = \iint_{\Sigma_T} \psi(t,x) \left[ \int f\,d\nu_{t,x} \right] dx\,dt
\end{align}
for all $f\in\cC([0,1])$ and $\psi \in L^1(\Sigma_T)$.
Observe that as a topological space, $\mathcal Y$ is then metrisable, separable and compact.

Recall the smoothly weighted average $\hat\eta_{i,k}$ in \eqref{eq:hateta} with the mesoscopic scale $k=k_n$ in \eqref{eq:assp-k}.
The corresponding empirical measure process reads
\begin{align}
\label{eq:empirical}
  \nu_{t,x}^n(dy) := \sum_{i=k+1}^{n-k} \chi_{i,n}(x)\delta_{\hat\eta_{i,k}(t)}(dy), \quad (t,x)\in\Sigma_T,
\end{align}
where $\delta_u$ is the Dirac measure concentrated on $u$, and $\chi_{i,n}$ is the indicator function
\begin{align}
\label{eq:indicator}
  \chi_{i,n}(x) := \mathbf1 \left\{ x \in \left[ \frac in-\frac1{2n},\frac in+\frac1{2n} \right) \cap[0,1] \right\}.
\end{align}
Denote by $\fQ_n$ the probability measure on $\mathcal Y$ determined by $\nu^n$.
Since $\mathcal Y$ is compact, we can subtract a weakly convergent subsequence $\{\fD_{n'}\}$.
Without confusion we denote the subsequence still by $\fQ_n$.
Suppose that $\fQ$ is its weak limit.
Our main purpose is to prove that $\fQ$ is concentrated on Dirac-type Young measures.

\begin{prop}
\label{prop:hl1}
Let $\mathcal Y_D$ be the subset of $\mathcal Y$ given by
\begin{align}
  \mathcal Y_D := \Big\{\nu\in\mathcal Y; \ \exists\,\rho \in L^\infty(\Sigma_T), \ \text{s.t.} \ \nu_{t,x}=\delta_{\rho(t,x)}, \ \text{a.e. in} \ \Sigma_T\Big\}.
\end{align}
Then, $\fQ(\mathcal Y_D)=1$.
Hence, $\fQ$ determines, in the nature way, a probability measure on $L^\infty(\Sigma_T)$ which is still denoted by $\fQ$ and \eqref{eq:hl1} then follows.
\end{prop}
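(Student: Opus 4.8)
The plan is to combine the compensated compactness method with Tartar's reduction theorem for Young measures attached to a genuinely nonlinear flux. By construction each empirical measure $\nu^n_{t,x}$ is, on every cell $\chi_{i,n}$, the Dirac mass $\delta_{\hat\eta_{i,k}}$, so $\fQ_n(\mathcal Y_D)=1$ for every $n$; the obstruction is that $\mathcal Y_D$ is not closed under the vague topology, and microscopic oscillations of $\hat\eta_{i,k}$ could spread the mass of a limit $\nu_{t,x}$ across $[0,1]$. The structural fact forbidding this is that the macroscopic flux $J(u)=u(1-u)$ is genuinely nonlinear, $J''\equiv-2\neq0$. Writing $\langle\nu,g\rangle:=\int_0^1 g(y)\,\nu_{t,x}(dy)$, Tartar's theorem asserts that if $\nu_{t,x}$ satisfies the commutation relation
\begin{equation}
\label{eq:comm}
  \langle\nu,F_1Q_2-F_2Q_1\rangle=\langle\nu,F_1\rangle\langle\nu,Q_2\rangle-\langle\nu,F_2\rangle\langle\nu,Q_1\rangle
\end{equation}
for every pair of Lax entropy--flux pairs $(F_1,Q_1)$, $(F_2,Q_2)$, then $\nu_{t,x}$ is a Dirac mass for almost all $(t,x)$. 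The proposition therefore reduces to establishing \eqref{eq:comm} for $\fQ$-almost every limiting $\nu$.

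To obtain \eqref{eq:comm} I would set up the compensated compactness (div--curl) lemma for the fields $(F_j(\hat\eta),pQ_j(\hat\eta))$, $j=1,2$. Fixing a Lax pair $(F,Q)$ and applying Dynkin's formula, the process $t\mapsto F(\hat\eta_{i,k}(t))$ splits into a drift $L_{n,t}[F(\hat\eta_{i,k})]$ and a martingale. The leading part of the generator action reproduces the discrete spatial gradient of $pQ(\hat\eta)$, while the symmetric part $\sigma n^{1+\kappa}L_\ss$ together with the Taylor remainder of $F$ yields a sign-definite entropy-production term and lower-order corrections. I would then verify three points: the martingale contribution vanishes in the relevant norm thanks to the mesoscopic averaging in \eqref{eq:hateta} and the scaling \eqref{eq:assp-k}; the entropy-production term is bounded in the space of Radon measures uniformly in $n$, since the symmetric speed-up $\sigma n^\kappa$ multiplied by the Dirichlet-form bound $\int_0^{t_0}\fD_n(\mu_{n,t})\,dt\le C(t_0)n^{-\kappa}$ of Lemma \ref{lem:dir} is of order one, while the microscopic currents are replaced by functions of $\hat\eta$ via Lemma \ref{lem:block}; and the remaining corrections are precompact in $H^{-1}_{\mathrm{loc}}(\Sigma_T)$. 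Murat's lemma then upgrades the full space--time divergence $\partial_t[F(\hat\eta)]+p\partial_x[Q(\hat\eta)]$ to a sequence precompact in $H^{-1}_{\mathrm{loc}}$. Since these bounds are produced in $\bE_n$-expectation, applying the div--curl lemma to the two fields and passing to the limit along the subsequence defining $\fQ$ yields \eqref{eq:comm}, $\fQ$-almost surely, for a countable family of pairs, which suffices by density.

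With \eqref{eq:comm} in hand I would invoke the genuine nonlinearity of $J$: choosing suitable Lax pairs (for instance those built from $u\mapsto|u-c|$ as in Remark \ref{rem:ent-sol}), the strict concavity $J''\neq0$ forces the support of $\nu_{t,x}$ to collapse to a single point, so that $\nu_{t,x}=\delta_{\rho(t,x)}$ with $\rho(t,x):=\langle\nu_{t,x},\mathrm{id}\rangle\in L^\infty(\Sigma_T)$, the measurability of $\rho$ being inherited from the Young-measure structure. This gives $\fQ(\mathcal Y_D)=1$, and \eqref{eq:hl1} follows by testing the convergence $\nu^n\to\nu$ against the continuous function $g$. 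The main obstacle is the second step, namely the uniform $H^{-1}_{\mathrm{loc}}$ precompactness of the microscopic entropy production: controlling the martingale fluctuations and the generator remainder simultaneously is exactly where the scaling window \eqref{eq:assp-k}, equivalently $\sqrt n\ll k\ll n^{(1+\kappa)/2}$, and the hypothesis $\kappa>1/2$ enter, through the Dirichlet-form and logarithmic Sobolev estimates of Section \ref{sec:local-equil}.
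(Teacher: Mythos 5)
Your proposal follows essentially the same route as the paper: the paper likewise decomposes the microscopic entropy production $X_n^{f,q}=Y_n+Z_n$ (Lemma \ref{lem:com-com}) into a part vanishing in the $H_0^1$-dual norm and a part bounded in expectation in the measure norm, using the Dynkin martingale, the one-block and $H^1$ estimates, and the Dirichlet-form bound of Lemma \ref{lem:dir}, exactly as you outline. The only difference is that the paper then invokes the stochastic compensated compactness results of \cite{Fritz04,FritzT04} as a black box for the implication ``decomposition $\Rightarrow$ $\fQ(\mathcal Y_D)=1$'', whereas you unpack that black box (Murat's lemma, the div--curl/Tartar commutation relation, and genuine nonlinearity $J''=-2\neq0$), which is precisely the content of the cited references.
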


Before prove Proposition \ref{prop:hl1}, we give a straightforward corollary.

\begin{cor}
\label{cor:hl2}
For any $f=f(\eta_1,\ldots,\eta_\ell)$ and $\psi \in L^1(\Sigma_T)$,
\begin{align}
  \lim_{n\to\infty} \bE_n \left[ \int_0^T \frac1n\sum_{i=0}^{n-\ell} f(\tau_i\eta)\psi \left( t,\frac in \right) dt \right] = E^\fQ \left[ \iint_{\Sigma_T} \langle f \rangle(\rho)\psi\,dx\,dt \right].
\end{align}
\end{cor}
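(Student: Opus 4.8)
The plan is to prove the identity first for continuous test functions and then extend to $\psi\in L^1(\Sigma_T)$ by density, the substance being already contained in Proposition~\ref{prop:hl1} and Proposition~\ref{prop:local-equil}. Fix $g:=\langle f\rangle$, which is continuous on $[0,1]$ and bounded by $\|f\|_\infty$. For $\psi\in\cC_c(\bR^2)$ supported in $[0,T]\times\bR$, the first step is to invoke Proposition~\ref{prop:local-equil}, which allows us to replace $\frac1n\sum_{i=0}^{n-\ell}f(\tau_i\eta)\psi(t,i/n)$ by $\frac1n\sum_{i=k+1}^{n-k}g(\hat\eta_{i,k})\psi(t,i/n)$ inside $\bE_n[\int_0^T\,\cdot\,dt]$ with an error that vanishes as $n\to\infty$; note that this also absorbs the mismatch between the index ranges $i=0,\ldots,n-\ell$ and $i=k+1,\ldots,n-k$. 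It therefore suffices to analyse the averaged field of $g(\hat\eta_{i,k})$.

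Next I would rewrite this discrete pairing as an integral against the empirical Young measure $\nu^n$ of \eqref{eq:empirical}. Since $\int_0^1\chi_{i,n}(x)\,dx=1/n$ and $\chi_{i,n}$ is supported in an interval of width $1/n$ around $i/n$, for continuous $\psi$ one has
\begin{align}
  \frac1n\sum_{i=k+1}^{n-k} g(\hat\eta_{i,k})\psi\Big(t,\tfrac in\Big)
  = \iint_{\Sigma_T}\psi(t,x)\Big[\int g\,d\nu^n_{t,x}\Big]dx\,dt + R_n,
\end{align}
where the Riemann-sum remainder $R_n$ is controlled by the modulus of continuity of $\psi$ and hence tends to $0$. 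Define the functional $\Phi(\nu):=\iint_{\Sigma_T}\psi(t,x)[\int g\,d\nu_{t,x}]\,dx\,dt$ on $\mathcal Y$. By the very definition of the vague topology, $\Phi$ is continuous, and it is bounded by $\|f\|_\infty\|\psi\|_{L^1}$. Because $\fQ_n$ is the law of $\nu^n$, the displayed left-hand side equals $E^{\fQ_n}[\Phi]+R_n$, and the weak convergence $\fQ_n\to\fQ$ on the compact metric space $\mathcal Y$ yields $E^{\fQ_n}[\Phi]\to E^\fQ[\Phi]$.

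It then remains to evaluate $E^\fQ[\Phi]$ using Proposition~\ref{prop:hl1}. Since $\fQ(\mathcal Y_D)=1$, we have $\nu_{t,x}=\delta_{\rho(t,x)}$ for $\fQ$-almost every $\nu$ and almost every $(t,x)$, so that $\int g\,d\nu_{t,x}=g(\rho(t,x))=\langle f\rangle(\rho(t,x))$. Consequently $E^\fQ[\Phi]=E^\fQ[\iint_{\Sigma_T}\langle f\rangle(\rho)\psi\,dx\,dt]$, which establishes the corollary for continuous $\psi$.

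Finally I would pass to general $\psi\in L^1(\Sigma_T)$ by density. Both $\psi\mapsto\bE_n[\int_0^T\frac1n\sum_i f(\tau_i\eta)\psi(t,i/n)\,dt]$, read through its Young-measure form as integration against $\sum_i\chi_{i,n}f(\tau_i\eta)$, and $\psi\mapsto E^\fQ[\iint_{\Sigma_T}\langle f\rangle(\rho)\psi\,dx\,dt]$ are linear and bounded by $\|f\|_\infty\|\psi\|_{L^1}$, uniformly in $n$, since the $\chi_{i,n}$ have disjoint supports and $\langle f\rangle\in[\inf f,\sup f]$. Approximating $\psi$ in $L^1$ by continuous functions and using a standard $\varepsilon/3$ argument (first fix the continuous approximant, then let $n\to\infty$) transfers the identity to all $\psi\in L^1(\Sigma_T)$. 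As every ingredient beyond the two cited propositions is elementary, the only point requiring care is this last reconciliation between the pointwise evaluation $\psi(t,i/n)$ appearing on the left and the $L^1$ pairing demanded by the Young-measure framework; the density argument resolves it, and it constitutes the sole mild obstacle in an otherwise immediate deduction.
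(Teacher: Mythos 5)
Your proposal is correct and follows essentially the same route as the paper, whose proof is exactly the two steps you carry out: the case $\psi\in\cC(\Sigma_T)$ is deduced from Proposition \ref{prop:local-equil} (replacement of $f(\tau_i\eta)$ by $\langle f\rangle(\hat\eta_{i,k})$) together with Proposition \ref{prop:hl1} (concentration of $\fQ$ on Dirac-type Young measures), and the extension to $\psi\in L^1(\Sigma_T)$ is the standard approximation argument. Your fleshed-out version, including the observation that for general $\psi\in L^1$ the left-hand side must be read in its averaged form $\iint\psi\,\chi_{i,n}\,dx\,dt$ rather than through pointwise evaluation at $x=i/n$ (which alone makes the uniform bound $\|f\|_\infty\|\psi\|_{L^1}$ and hence the $\varepsilon/3$ argument legitimate), is a faithful and accurate expansion of what the paper leaves implicit.
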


\begin{proof}
The conclusion for $\psi\in\cC(\Sigma_T)$ follows directly from Proposition \ref{prop:local-equil} and \ref{prop:hl1}.
To extend the result to $\psi \in L^1(\Sigma_T)$, we only need to apply the standard argument to approximate $\psi$ by smooth functions.
\end{proof}

Now we proceed to the proof of Proposition \ref{prop:hl1}.
We first point out the main obstacle.
Apparently, $\fQ_n(\mathcal Y_D) =1$ for each $n$.
However, the concentration is not autonomously inherited by the weak limit $\fQ$ of $\fQ_n$, since $\mathcal Y_D$ is not closed under the vague topology.
To solve this problem, the stochastic compensated compactness is introduced in \cite[Proposition 2.1]{Fritz04}, \cite[Lemma 1, Lemma 8]{FritzT04} as a sufficient condition for $\fQ(\mathcal Y_D)=1$.

Let $\cC_c^\infty(\Sigma_T)$ be the class of smooth functions on $\Sigma_T$ with compact support included in $\Sigma_T$.
Define the Sobolev norm
\begin{align}
  \|\psi\|_{H_0^1}^2 := \|\psi\|_{L^2}^2 + \|\partial_t\psi\|_{L^2}^2 + \|\partial_x\psi\|_{L^2}^2, \quad \forall\,\psi\in\cC_c(\Sigma_T),
\end{align}
where $\|\cdot\|_{L^2}$ is the usual $L^2$ norm.
Also let $\|\cdot\|_{L^\infty}$ be the usual $L^\infty$ norm.

Let $(f,q)$ be a Lax entropy flux pair in Definition \ref{defn:ent-flux}.
Define the microscopic entropy production associated to $(f,q)$ by
\begin{align}
  X_n^{f,q}(\psi) := -\iint_{\Sigma_T} \left( \partial_t\psi\int f\,d\nu_{t,x}^n+p\partial_x\psi\int q\,d\nu_{t,x}^n \right) dx\,dt,
\end{align}
for all $\psi\in\cC^1(\Sigma_T)$.
To show Proposition \ref{prop:hl1}, it suffices to prove the next lemma.

\begin{lem}[Stochastic compensated compactness]
\label{lem:com-com}
The entropy production decomposes as $X_n^{f,q} = Y_n+Z_n$, such that
\begin{align}
  \label{eq:com-com-y}
  &|Y_n(\psi)| \le a_n\|\psi\|_{H_0^1}, \ \forall\,\psi\in\cC_c(\Sigma_T) \quad \text{and} \quad \lim_{n\to\infty} \bE_n [a_n]=0;\\
  \label{eq:com-com-z}
  &|Z_n(\psi)| \le b_n\|\psi\|_{L^\infty}, \ \forall\,\psi\in\cC_c(\Sigma_T) \quad \text{and} \quad \sup_{n\ge1} \bE_n [b_n]<\infty.
\end{align}
\end{lem}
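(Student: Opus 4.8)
The plan is to read $X_n^{f,q}(\psi)$ off the time evolution of the smoothed field. Writing $\Psi_i(t):=\int\chi_{i,n}(x)\psi(t,x)\,dx$, so that $\int f\,d\nu^n_{t,x}=\sum_i\chi_{i,n}(x)f(\hat\eta_{i,k}(t))$ and $\int\chi_{i,n}\,\partial_t\psi\,dx=\partial_t\Psi_i$, an integration by parts in time together with the Dynkin formula $df(\hat\eta_{i,k})=L_{n,t}f(\hat\eta_{i,k})\,dt+dM_i^f$ rewrites the $\partial_t\psi$-part of $X_n^{f,q}$ as $\sum_i\int_0^T\Psi_i\,L_{n,t}f(\hat\eta_{i,k})\,dt+\sum_i\int_0^T\Psi_i\,dM_i^f$. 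Everything then reduces to analysing the drift $L_{n,t}f(\hat\eta_{i,k})$ and the Dynkin martingale, and matching them against the explicit flux term $-p\iint_{\Sigma_T}\partial_x\psi\,\big(\sum_i\chi_{i,n}(x)\,q(\hat\eta_{i,k})\big)\,dx\,dt$.

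Since any elementary swap moves $\hat\eta_{i,k}$ by $(w_{i-j}-w_{i-j-1})(\eta_{j+1}-\eta_j)=O(k^{-2})$ and $f\in\cC^2$, I would Taylor-expand $L_{n,t}f(\hat\eta_{i,k})$ to second order, splitting $L_{n,t}$ as in \eqref{eq:generator} into the asymmetric part $pnL_\ta$, the viscous part $\sigma n^{1+\kappa}L_\ss$ and the boundary part $n^{1+\theta}(L_{-,t}+L_{+,t})$. The first-order asymmetric contribution equals $-pn\,f'(\hat\eta_{i,k})\sum_j\eta_j(1-\eta_{j+1})(w_{i-j}-w_{i-j-1})$; after a discrete summation by parts and the one-block replacement $\eta_j(1-\eta_{j+1})\rightsquigarrow J(\hat\eta_{i,k})$, legitimate at the mesoscopic scale by the local-equilibrium input behind Proposition \ref{prop:local-equil}, the smoothed gradient it produces matches the macroscopic flux derivative $-p\partial_x q(\hat\eta_{i,k})$, using $q'=J'f'$. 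A discrete integration by parts in $x$ then cancels this against the explicit flux term, leaving (a) discretisation and replacement errors which I claim are $\|\cdot\|_{H_0^1}$-negligible in expectation, hence placed in $Y_n$, and (b) the nonnegative convex remainder carrying $f''\ge0$, a dissipation of bounded mass placed in $Z_n$.

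It remains to treat the viscous, boundary and martingale pieces. The viscous contribution $\sigma n^{1+\kappa}\sum_i\Psi_i\,L_\ss f(\hat\eta_{i,k})$ is a priori of the large order $n^{1+\kappa}k^{-2}$, but its first-order part has an exact discrete-gradient (Dirichlet-form) structure: writing it as $\sigma n^{1+\kappa}\sum_j(\eta_{j+1}-\eta_j)G_j$ with $G_j:=\sum_i\Psi_i\,f'(\hat\eta_{i,k})(w_{i-j}-w_{i-j-1})$ and applying the \CS flux inequality against the Dirichlet form, I would bound it using $\int_0^T\fD_n(\mu_{n,t})\,dt\le Cn^{-\kappa}$ from Lemma \ref{lem:dir}, and route it into $Y_n$. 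The boundary operators act only on $\hat\eta_{k+1,k}$ and $\hat\eta_{n-k,k}$, are uniformly bounded because $\theta\le0$, and contribute $O(1)$ mass localised near $x=0$ and $x=1$, hence into $Z_n$. The martingale $\sum_i\int_0^T\Psi_i\,dM_i^f$ is handled through its quadratic variation: the carr\'e du champ of $f(\hat\eta_{i,k})$ and its range-$k$ cross correlations give $\bE_n\big[\big(\sum_i\int_0^T\Psi_i\,dM_i^f\big)^2\big]\lesssim n^{\kappa}k^{-2}\|\psi\|_{L^2}^2\to0$, placing it in $Y_n$.

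The hard part is the scale bookkeeping, and I expect the genuinely delicate term to be the second-order (diffusive) part of the viscous contribution, of a priori order $n^{1+\kappa}k^{-3}$, which is \emph{not} controlled by the Dirichlet form and must be absorbed into $Z_n$ by a more refined estimate; keeping its mass bounded, while simultaneously making the first-order viscous part $H_0^1$-negligible, pulls $k$ in opposite directions, and Proposition \ref{prop:local-equil} caps $k\ll n^{(1+\kappa)/2}$ from above. Threading all of these through the window $\sqrt n\ll k\ll n^{(1+\kappa)/2}$ of \eqref{eq:assp-k} is the crux, and I anticipate that the two competing ways of estimating the viscous contribution are exactly what produce the threshold $\min\{2\kappa-1,(1+\kappa)/3\}$ in \eqref{eq:assp-k}; a secondary subtlety is that, $\Sigma_T$ being two-dimensional, the martingale must be controlled in $H^{-1}$ through its quadratic variation rather than by a naive operator-norm bound.
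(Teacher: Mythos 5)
Your overall skeleton (Dynkin martingale for $f(\hat\eta_{i,k})$, second-order Taylor expansion of the generator, one-block replacement of the microscopic current against the explicit flux term, martingale via quadratic variation) is the same as the paper's, but your treatment of the first-order viscous term contains a genuine error, and it is conceptual rather than technical. You propose to write $\sigma n^{1+\kappa}\sum_i\Psi_i f'(\hat\eta_{i,k})\Delta\hat\eta_{i,k}$ as $\sigma n^{1+\kappa}\sum_j\nabla\eta_j\,G_j$ and to bound it by Cauchy--Schwarz against the Dirichlet form using Lemma \ref{lem:dir}, routing it entirely into $Y_n$. Quantitatively this fails: since $|\Psi_i|\le\|\psi\|_{L^\infty}/n$, $|w_{i-j}-w_{i-j-1}|\le k^{-2}$ and only $O(k)$ indices $i$ contribute to $G_j$, one has $\|G_j\|_{\infty}\le C\|\psi\|_{L^\infty}(nk)^{-1}$, so the flux--Dirichlet-form estimate gives at best
\begin{align*}
  \sigma n^{1+\kappa}\,\Big(\sum_j\|G_j\|_\infty^2\Big)^{1/2}\Big(\int_0^T\fD_n(\mu_{n,t})\,dt\Big)^{1/2}
  \le \frac{C\,n^{1+\kappa}}{\sqrt n\,k}\,n^{-\kappa/2}\,\|\psi\|_{L^\infty}
  = \frac{C\,n^{(1+\kappa)/2}}{k}\,\|\psi\|_{L^\infty},
\end{align*}
which \emph{diverges}, because \eqref{eq:assp-k} forces $k\ll n^{(1+\kappa)/2}$ (an upper bound on $k$ you cannot relax, since it is needed for Proposition \ref{prop:local-equil} and the block estimates). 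More importantly, no refinement can repair this: after summation by parts this term equals, up to $H_0^1$-negligible corrections, $\cS_{n,1}(\psi)=-\sigma n^\kappa\int_0^T\sum_i\bar\psi_i\,\nabla\hat\eta_i\,\nabla f'(\hat\eta_i)\,dt$, which by convexity of $f$ is signed and is precisely the term that survives as $n\to\infty$: it is the microscopic origin of the strict entropy production (Remark \ref{rem:ent-prod}), i.e.\ of the nonzero negative measure $\partial_t f(\rho)+p\partial_x q(\rho)$ carried by shocks. If it could be placed in $Y_n$, the limit points would satisfy entropy \emph{equality} for every Lax pair, which is false for the entropy solution with these boundary data. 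The correct move, and the paper's, is to keep it in $Z_n$ with merely bounded mass, via the $H^1$ estimate of Proposition \ref{prop:h1}: $\bE_n[\int_0^T\sum_i(\nabla\hat\eta_i)^2dt]\le C(n^{-\kappa}+nk^{-3})$, whence $|\cS_{n,1}(\psi)|\le b_{n,2}\|\psi\|_{L^\infty}$ with $\bE_n[b_{n,2}]\le C(1+n^{1+\kappa}k^{-3})=O(1)$.

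Two secondary inaccuracies. First, the boundary generators contribute exactly zero, not ``$O(1)$ mass localised near the boundary'': for $k+1\le i\le n-k$ the average $\hat\eta_{i,k}$ depends only on $\eta_2,\dots,\eta_{n-1}$, so $L_{\pm,t}[f(\hat\eta_{i,k})]\equiv0$; this is precisely why the proof is insensitive to the two reservoir types and to $\theta$. Second, you misidentify the crux: the second-order (Taylor) part of the viscous contribution is handled by the crude pointwise bound $|\ep_i|\le Cn^\kappa k^{-3}$ and vanishes under \eqref{eq:assp-k}; the genuinely delicate terms are the first-order ones, whose control requires the logarithmic-Sobolev block estimates (Propositions \ref{prop:one-block} and \ref{prop:h1}) rather than the bare Dirichlet-form bound, and the thresholds in \eqref{eq:assp-k} arise from the boundary term $n^{\kappa-1/2}k^{-1/2}$ in the summation by parts of $\cS_n$ and from the $n^{1+\kappa}k^{-3}$ terms (martingale variance, Taylor remainders, $H^1$ bound), not from a competition between two ways of estimating the viscous part.
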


The proof is similar to \cite[Section 6]{Xu21}, and there is no difference between the two types of reservoirs.
Recall the indicator function $\chi_{i,n}$ in \eqref{eq:indicator}.
For $\psi\in\cC(\Sigma_T)$, define
\begin{align}
  \psi_i(t) := \psi \left( \frac in - \frac1{2n}, t \right), \quad \bar\psi_i(t) := n\int_0^1 \psi(t,x) \chi_{i,n}(x)dx. 
\end{align}
For a sequence $\{a_i\}$, denote $\nabla a_i=a_{i+1}-a_i$, $\nabla^*a_i=a_{i-1}-a_i$ and
\begin{align}
  \Delta a_i=(-\nabla^*\nabla)a_i=a_{i+1}-2a_i+a_{i-1}.
\end{align}

Observe that $\hat\eta_{i,k}$ is supported on $\{\eta_2, ..., \eta_{n-1}\}$ for $i=k+1$, ... $n-k$, so $L_{\pm,t}$ does not contribute to its time evolution:
\begin{align}
\label{eq:current}
  L_{n,t} [\hat\eta_{i,k}] = n\big(p\nabla^*\hat J_{i,k}+\sigma n^\kappa\Delta\hat\eta_{i,k}\big), \quad \hat J_{i,k} := \sum_{i'=-k+1}^{k-1} w_{i'}J_{i-i',i-i'+1},
\end{align}
where $J_{i,i+1}:=\eta_i(1-\eta_{i+1})$.
In most of the following contents, we omit the subscript $k$ in $\hat\eta_{i,k}$, $\hat J_{i,k}$ and write $\hat\eta_i$, $\hat J_i$ for short.

\begin{proof}[Proof of Lemma \ref{lem:com-com}]
By the definition of $\nu^n$ in \eqref{eq:empirical},
\begin{equation}
\label{eq:ent-prod-decom0}
  \begin{aligned}
    X_n^{f,q}(\psi) = &-\int_0^T \frac1n\sum_{i=k+1}^{n-k} f(\hat\eta_i)\bar\psi'_i\,dt - p\int_0^T \sum_{i=k+1}^{n-k} q(\hat\eta_i)\nabla\psi_i\,dt\\
    &- pq(0)\int_0^T \left( \int_0^{\frac{2k+1}{2n}} + \int_{1-\frac{2k-1}{2n}}^1 \right) \partial_x\psi\,dx\,dt.
  \end{aligned}
\end{equation}
Denote by $M_i^f=M_i^f(t)$ the Dynkin's martingale associated to $f(\hat\eta_i)$:
\begin{align}
\label{eq:martingale}
  M_i^f(t) := f\big(\hat\eta_i(t)\big) - f\big(\hat\eta_i(0)\big) - \int_0^t L_{n,s} \big[f(\hat\eta_i(s))\big]ds.
\end{align}
Recall \eqref{eq:current} and define for $i=k+1$, ..., $n-k$ that
\begin{align}
\label{eq:corrections}
  \ep_i := \frac{L_{n,t}\big[f(\hat\eta_i)\big] - f'(\hat\eta_i)L_{n,t}[\hat\eta_i]}n, \quad
  \ep_i^* := f'(\hat\eta_i)\nabla^*J(\hat\eta_i) - \nabla^*q(\hat\eta_i). 
\end{align}
Using the relation $f'J'=q'$, we can decompose $L_{n,s}[f(\hat\eta_i)]$ into
\begin{align}
  n \left\{ pf'(\hat\eta_i)\nabla^* \left[ \hat J_i-J(\hat\eta_i) \right] + \sigma n^\kappa f'(\hat\eta_i)\Delta\hat\eta_i + \ep_i + p\ep_i^* + p\nabla^*q(\hat\eta_i) \right\}.
\end{align}
Choose $\psi\in\cC^1(\Sigma_T)$ such that $\psi(T,\cdot)=0$.
Performing integration by parts in the first integral in \eqref{eq:ent-prod-decom0} and using \eqref{eq:martingale}, \eqref{eq:corrections}, we obtain that
\begin{equation}
\label{eq:ent-prod-decom}
  \begin{aligned}
    X_n^{f,q}(\psi) =\;&\frac1n\sum_{i=k+1}^{n-k} f\big(\hat\eta_i(0)\big)\bar\psi_i(0)\\
    &+ \mathcal M_n(\psi) + \cA_n(\psi) + \cS_n(\psi) + \sum_{\ell=1,2,3} \mathcal E_{n,\ell}(\psi),
  \end{aligned}
\end{equation}
where $\mathcal M_n(\psi)$ is the martingale given by
\begin{align}
\label{eq:def-m}
  \mathcal M_n(\psi) := -\int_0^T \frac1n\sum_{i=k+1}^{n-k} \bar\psi'_i(t)M_i^f(t)dt,
\end{align}
with $M_i^f$ in \eqref{eq:martingale}, $\cA_n$ and $\cS_n$ are given by
\begin{align}
\label{eq:def-a}
  \cA_n(\psi) &:= p\int_0^T \sum_{i=k+1}^{n-k} \bar\psi_i(t)f'\big(\hat\eta_i(t)\big)\nabla^* \left[ \hat J_i(t) - J\big(\hat\eta_i(t)\big) \right] dt,\\
\label{eq:def-s}
  \cS_n(\psi) &:= \sigma n^\kappa\int_0^T \sum_{i=k+1}^{n-k} \bar\psi_i(t)f'\big(\hat\eta_i(t)\big)\Delta\hat\eta_i(t)dt,
\end{align}
and for $\ell=1$, $2$, $3$, $\mathcal E_{n,\ell}$ are given by
\begin{align}
\label{eq:def-e1}
  \mathcal E_{n,1}(\psi) &:= p\int_0^T \sum_{i=k+1}^{n-k} \big[\bar\psi_i\nabla^*q(\hat\eta_i) - q(\hat\eta_i)\nabla\psi_i\big]dt,\\
\label{eq:def-e2} 
  \mathcal E_{n,2}(\psi) &:= \int_0^T \sum_{i=k+1}^{n-k} (\ep_i+p\ep_i^*)\bar\psi_i\,dt,\\
\label{eq:def-e3}
  \mathcal E_{n,3}(\psi) &:= -pq(0)\int_0^T \left( \int_0^{\frac{2k+1}{2n}} + \int_{1-\frac{2k-1}{2n}}^1 \right) \partial_x\psi\,dx\,dt.
\end{align}

To close the proof,  define for each $\psi\in\cC_c^\infty(\Sigma_T)$ that
\begin{equation}
  \begin{aligned}
    &Y_n(\psi) := X_n^{f,q}(\psi)-Z_n(\psi),\\
    &Z_n(\psi) := \int_0^T \sum_{i=k+1}^{n-k} \bar\psi_i \left[ p\hat J_i-pJ\big(\hat\eta_i\big) -\sigma n^\kappa\nabla\hat\eta_i\right] \nabla f'\big(\hat\eta_i\big)dt + \mathcal E_{n,2}(\psi).
  \end{aligned}
\end{equation}
From Lemma \ref{lem:esti-y} and \ref{lem:esti-z} below, this is the desired decomposition.
\end{proof}

\begin{lem}
\label{lem:esti-y}
$Y_n=Y_n(\psi)$ satisfies the condition \eqref{eq:com-com-y}.
\end{lem}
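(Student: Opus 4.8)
The plan is to substitute the expansion \eqref{eq:ent-prod-decom} and the definition of $Z_n$ into $Y_n=X_n^{f,q}-Z_n$. The two copies of $\mathcal E_{n,2}$ cancel, and for a test function $\psi$ with compact support in the interior of $\Sigma_T$ the initial term $\tfrac1n\sum_i f(\hat\eta_i(0))\bar\psi_i(0)$, the boundary term $\mathcal E_{n,3}$, and the boundary terms produced by the summations by parts below all vanish once $n$ is large. Thus it remains to treat
\[
\mathcal M_n(\psi)+\mathcal E_{n,1}(\psi)+\Big(\cA_n(\psi)+\cS_n(\psi)-\int_0^T\sum_i\bar\psi_i\big[p\hat J_i-pJ(\hat\eta_i)-\sigma n^\kappa\nabla\hat\eta_i\big]\nabla f'(\hat\eta_i)\,dt\Big).
\]
The core manipulation is a discrete summation by parts moving $\nabla^*$ off $\hat J_i-J(\hat\eta_i)$ in $\cA_n$ and $\Delta$ off $\hat\eta_i$ in $\cS_n$, followed by the Leibniz rule $\nabla(\bar\psi_if'(\hat\eta_i))=\bar\psi_{i+1}\nabla f'(\hat\eta_i)+(\nabla\bar\psi_i)f'(\hat\eta_i)$. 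The pieces in which the gradient falls on $f'(\hat\eta_i)$ reproduce the subtracted bracket up to the shift $\bar\psi_{i+1}\mapsto\bar\psi_i$; after cancellation every surviving bulk term carries a factor $\nabla\bar\psi_i$ or $\bar\psi_i'$, which is what permits pairing against $\|\partial_x\psi\|_{L^2}$ or $\|\partial_t\psi\|_{L^2}$, both at most $\|\psi\|_{H_0^1}$.

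Next I would estimate the pieces separately. For the martingale, the \CS inequality in the pair $(i,t)$ with weight $1/n$ gives $|\mathcal M_n(\psi)|\le(\tfrac1n\int_0^T\sum_i(\bar\psi_i')^2dt)^{1/2}(\tfrac1n\int_0^T\sum_i(M_i^f)^2dt)^{1/2}$; the first factor tends to $\|\partial_t\psi\|_{L^2}$, and I set $a_n$ proportional to the second. Taking $\bE_n$ and writing $\bE_n[(M_i^f)^2]=\bE_n[\langle M_i^f\rangle]$, the quadratic variation is controlled by the $O(k)$ bonds acting on $\hat\eta_i$, the rate $\sigma n^{1+\kappa}$ and the squared jump size $O(k^{-4})$, whence $\bE_n[a_n^2]\le C\sigma T^2 n^{1+\kappa}k^{-3}$, which vanishes precisely because $k\gg n^{(1+\kappa)/3}$. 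The current remainder from $\cA_n$, after the \CS inequality, is bounded by $\tfrac1{\sqrt n}\|\partial_x\psi\|_{L^2}$ times $(\int_0^T\sum_i|\hat J_i-J(\hat\eta_i)|^2dt)^{1/2}$; Lemma \ref{lem:block} applied to $f=J_{1,2}=\eta_1(1-\eta_2)$ together with $\int_0^T\fD_n(\mu_{n,t})dt=O(n^{-\kappa})$ from Lemma \ref{lem:dir} makes the expectation of this coefficient a negative power of $n$ as soon as $k<n^\kappa$.

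The remainder from $\cS_n$ is the delicate one. It has the same $\nabla\bar\psi_i$ structure but carries the large prefactor $\sigma n^\kappa$ and the gradient $\nabla\hat\eta_i$ of the smoothed density, so the \CS inequality leaves the coefficient $\sigma n^{\kappa-1/2}(\int_0^T\sum_i(\nabla\hat\eta_i)^2dt)^{1/2}$. A pointwise bound $|\nabla\hat\eta_i|\le k^{-1}$ is far too lossy against $\sigma n^\kappa$; instead one must show that the \emph{mean} of $\int_0^T\sum_i(\nabla\hat\eta_i)^2dt$ is of the intrinsic order $n/k^3$ rather than $n/k^2$. This smaller size comes from the double average defining $\hat\eta_i$ (the increments of the two overlapping block averages largely cancel) and is present already in local equilibrium, so it is \emph{not} a consequence of $\fD_n$ being small; the Dirichlet form only controls the deviation from this intrinsic value. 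With the bound $\bE_n\int_0^T\sum_i(\nabla\hat\eta_i)^2dt\le C(n/k^3+\int_0^T\fD_n)$ the coefficient is of order $n^{\kappa-3\kappa'/2}$ in expectation, again vanishing under $k\gg n^{(1+\kappa)/3}$. Finally $\mathcal E_{n,1}$ is the mismatch between the two discretisations $\bar\psi_i$ and $\psi_i$ of $\psi$: one summation by parts turns it into $p\int_0^T\sum_i q(\hat\eta_i)(\nabla\bar\psi_i-\nabla\psi_i)dt$, a higher-order quadrature difference bounded by $\|\psi\|_{H_0^1}$ times a deterministic vanishing factor.

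The main obstacle is therefore the viscous remainder, where the fixed microscopic prefactor $\sigma n^\kappa$ must be absorbed by the gradient estimate on $\nabla\hat\eta_i$. Establishing $\bE_n\int_0^T\sum_i(\nabla\hat\eta_i)^2dt=O(n/k^3)+O(\int_0^T\fD_n)$ is exactly where the logarithmic Sobolev inequality \cite{Yau97} (through a block estimate as in Lemma \ref{lem:block}) and the smallness $\int_0^T\fD_n=O(n^{-\kappa})$ of Lemma \ref{lem:dir} enter decisively. Together with the martingale estimate this forces the lower bound $k\gg n^{(1+\kappa)/3}$ appearing in \eqref{eq:assp-k}, while the upper bound $k\ll n^\kappa$ is what the current-replacement and local-equilibrium estimates demand; balancing these two requirements against the prefactors $\sigma n^\kappa$ and $pn$ is the crux of the lemma.
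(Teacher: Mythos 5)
Your overall route is the paper's own: the same summation by parts isolating the terms $\cA_{n,1}$, $\cS_{n,1}$ of $Z_n$, the martingale controlled through its quadratic variation of order $n^{1+\kappa}k^{-3}$ via \eqref{eq:exchange-smooth}, the current remainder via the one-block estimate (your ``Lemma \ref{lem:block} for $\eta_1(1-\eta_2)$ plus Lemma \ref{lem:dir}'' is exactly Proposition \ref{prop:one-block}), and the viscous remainder via the $H^1$ estimate of Proposition \ref{prop:h1}; your observation that the $nk^{-3}$ size is the equilibrium-fluctuation order, not a consequence of the Dirichlet form being small, is accurate. However, two steps as written are genuinely flawed. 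First, you dismiss the initial term, $\mathcal E_{n,3}$, and all summation-by-parts boundary terms because they ``vanish once $n$ is large'' for each fixed compactly supported $\psi$. Condition \eqref{eq:com-com-y} requires a \emph{single} random $a_n$ with $|Y_n(\psi)|\le a_n\|\psi\|_{H_0^1}$ for \emph{all} $\psi\in\cC_c(\Sigma_T)$ simultaneously and $\bE_n[a_n]\to0$: the supremum over $\psi$ is taken before $n\to\infty$, so per-$\psi$ vanishing proves nothing (the paper flags precisely this trap in a footnote). These terms must be estimated uniformly, as in \eqref{eq:esti-bd}, $|\int_0^T\bar\psi_{k+1}\,dt|\le C\sqrt{k/n}\,\|\psi\|_{H_0^1}$. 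Nor are they harmless: the boundary term produced by $\cS_n$ carries the factor $\sigma n^\kappa$, and with $|\nabla\hat\eta_i|\le k^{-1}$ and \eqref{eq:esti-bd} it is only $O\big(n^{\kappa-1/2}k^{-1/2}\big)\|\psi\|_{H_0^1}$, which vanishes only when $k\gg n^{2\kappa-1}$ --- the second constraint appearing in \eqref{eq:assp-k}, which your analysis never produces.

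Second, your treatment of $\mathcal E_{n,1}$ rests on a mechanism that fails under the available norm. After your summation by parts, the bulk term is $p\int_0^T\sum_i q(\hat\eta_i)\big(\nabla\bar\psi_i-\nabla\psi_i\big)dt$, and you call this a ``higher-order quadrature difference'' with a deterministic vanishing factor. That would be true for $\psi\in\cC^2$, but the bound must be in terms of $\|\psi\|_{H_0^1}$ only: with just $\partial_x\psi\in L^2$, one has $|\bar\psi_i-\psi_i|\lesssim n^{-1/2}\|\partial_x\psi\|_{L^2(\mathrm{cell}_i)}$ and the discrete gradient of this error gains nothing, so the crude estimate gives $O(1)\cdot\|\psi\|_{H_0^1}$ with no vanishing factor at all. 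The actual smallness has a different source: one more summation by parts (again modulo boundary terms) puts the difference operator on $q(\hat\eta_i)$, and then the regularity of the \emph{smoothed field}, $|\nabla^*q(\hat\eta_i)|\le|q'|_\infty|\nabla^*\hat\eta_i|\le Ck^{-1}$, yields the paper's bound $(C/k)\|\psi\|_{H_0^1}$. Both gaps are repairable, but repairing them is exactly the content of \eqref{eq:esti-bd} and of the paper's estimates of \eqref{eq:y1} and $\mathcal E_{n,1}$, so they cannot be waved away.
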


\begin{lem}
\label{lem:esti-z}
$Z_n=Z_n(\psi)$ satisfies the condition \eqref{eq:com-com-z}.
\end{lem}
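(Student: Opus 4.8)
The plan is to realise $b_n$ as the time-integrated $\ell^1$-mass of the summands of $Z_n$, extracting the factor $\|\psi\|_{L^\infty}$ through the pointwise bound $|\bar\psi_i(t)|\le\|\psi\|_{L^\infty}$. Recalling the definition of $Z_n$ together with \eqref{eq:def-e2} and \eqref{eq:corrections}, the summands fall into three groups: the current-replacement term proportional to $p[\hat J_i-J(\hat\eta_i)]\nabla f'(\hat\eta_i)$, the viscous term $-\sigma n^\kappa\nabla\hat\eta_i\,\nabla f'(\hat\eta_i)$, and the chain-rule defects $\ep_i$, $\ep_i^*$ collected in $\mathcal E_{n,2}$.

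I would first handle the two leading groups jointly. Writing $\nabla f'(\hat\eta_i)=f''(\xi_i)\nabla\hat\eta_i$ for an intermediate value $\xi_i$ and using the convexity $f''\ge0$, their combined contribution to $b_n$ is controlled by $\sum_i f''(\xi_i)|\nabla\hat\eta_i|\,|p(\hat J_i-J(\hat\eta_i))-\sigma n^\kappa\nabla\hat\eta_i|$, and a single application of Young's inequality bounds this by $C\sum_i[n^{-\kappa}(\hat J_i-J(\hat\eta_i))^2+n^\kappa(\nabla\hat\eta_i)^2]$. It therefore suffices to establish the two expectation estimates $\bE_n\int_0^T\sum_i(\hat J_i-J(\hat\eta_i))^2\,dt\le Cn^\kappa$ and $n^\kappa\,\bE_n\int_0^T\sum_i(\nabla\hat\eta_i)^2\,dt\le C$.

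Both reduce to the block machinery already developed. For the current replacement I would invoke Lemma \ref{lem:block} with $f=\eta_1(1-\eta_2)$, so that $\langle f\rangle=J$ and the local average $f_{i,k}$ differs from $\hat J_i$ only by lower-order terms, giving $\sum_i\bE_{\mu_{n,t}}[(\hat J_i-J(\hat\eta_i))^2]\le C[k^2\fD_n(\mu_{n,t})+n/k]$; integrating in time and inserting $\int_0^T\fD_n\,dt\le Cn^{-\kappa}$ from Lemma \ref{lem:dir} closes it once $k^2n^{-2\kappa}$ and $n^{1-\kappa-\kappa'}$ are bounded. For the gradient I would use the telescoping identity $\nabla\hat\eta_i=k^{-1}(\bar\eta_{i+k,k}-\bar\eta_{i,k})$, reducing $\sum_i(\nabla\hat\eta_i)^2$ to $k^{-2}$ times a sum of squared differences of adjacent block averages, which the same log-Sobolev estimate bounds by $k^{-2}\cdot C[k^2\fD_n+n/k]$; after multiplication by $n^\kappa$ and time integration this is bounded provided $n^{1+\kappa-3\kappa'}$ is. The \emph{main obstacle} lies precisely here: the naive deterministic bounds $|\hat J_i-J(\hat\eta_i)|\le C$ and $|\nabla\hat\eta_i|\le C/k$ give only $\sum_i(\nabla\hat\eta_i)^2=O(n/k^2)$, which is far too large after the factor $n^\kappa$ because $k\ll n^{(1+\kappa)/2}$; one must replace these configurations by their expectations through the logarithmic Sobolev inequality and the entropy dissipation, and the three exponent conditions above are exactly what the choice of $\kappa'$ in \eqref{eq:assp-k} guarantees.

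Finally, I would bound the defects in $\mathcal E_{n,2}$ deterministically. Using $q'=f'J'$ and the twice differentiability of $f$ and $q$, a second-order Taylor expansion gives $\ep_i^*=O((\nabla\hat\eta_i)^2)=O(k^{-2})$; since a single symmetric exchange alters $\hat\eta_i$ by $O(k^{-2})$ and only $O(k)$ bonds feed into $\hat\eta_i$, the generator defect obeys $\ep_i=O(n^\kappa k^{-3})$. Summing over $i$ yields $\sum_i|\ep_i^*|=O(nk^{-2})=O(n^{1-2\kappa'})$ and $\sum_i|\ep_i|=O(n^{1+\kappa}k^{-3})=O(n^{1+\kappa-3\kappa'})$, both of which vanish because $\kappa'>1/2$ and $\kappa'>(1+\kappa)/3$ by \eqref{eq:assp-k}. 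Combining the three groups then gives $\sup_n\bE_n[b_n]<\infty$, as required.
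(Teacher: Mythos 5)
Your proposal is correct and follows essentially the same route as the paper: bound $|\bar\psi_i|$ by $\|\psi\|_{L^\infty}$, control the current-replacement and viscous terms through the block estimates (the paper's Propositions \ref{prop:one-block} and \ref{prop:h1}, which you re-derive from Lemma \ref{lem:block} and Lemma \ref{lem:dir} exactly as the paper does), and bound $\ep_i$, $\ep_i^*$ deterministically by $Cn^\kappa k^{-3}$ and $Ck^{-2}$ via \eqref{eq:exchange-smooth} and Taylor expansion. The only cosmetic difference is that you merge $\cA_{n,1}$ and $\cS_{n,1}$ and apply Young's inequality with weights $n^{\pm\kappa}$, where the paper keeps them separate and uses the Cauchy--Schwarz inequality on $\cA_{n,1}$; both yield bounds that are finite under the choice of $k$ in \eqref{eq:assp-k}.
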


To prove Lemma \ref{lem:esti-y} and \ref{lem:esti-z}, we make use of the following block estimates.
Their proofs are parallel to Proposition \ref{prop:local-equil} and are postponed to the end of this section.

\begin{prop}[One-block estimate]
\label{prop:one-block}
There is some constant $C$, such that 
\begin{align}
  \bE_n \left[ \int_0^T \sum_{i=k+1}^{n-k} \left[ \hat J_i-J\big(\hat\eta_i\big) \right]^2dt \right] \le C \left( \frac{k^2}{\sigma n^\kappa} + \frac nk \right). 
\end{align}
\end{prop}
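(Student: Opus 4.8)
The plan is to recognise this estimate as an instance of the one-block (equivalence of ensembles) bound already contained in Lemma \ref{lem:block}, applied to the current observable. Set $f=f(\eta_1,\eta_2):=\eta_1(1-\eta_2)=J_{1,2}$, so that $\ell=2$ and, under the product Bernoulli measure $\nu_\rho$, $\langle f\rangle(\rho)=\rho(1-\rho)=J(\rho)$. Consequently $\langle f\rangle(\hat\eta_i)=J(\hat\eta_i)$ exactly, and $\hat J_i=\sum_{i'=-k+1}^{k-1}w_{i'}J_{i-i',i-i'+1}$ is precisely the smoothly weighted empirical average of the translates of $f$. Thus the integrand $[\hat J_i-J(\hat\eta_i)]^2$ is the exact analogue of $|f_{i,k}-\langle f\rangle(\hat\eta_i)|^2$ in Lemma \ref{lem:block}, the only change being that the triangular weights $w_{i'}$ replace the flat block average $f_{i,k}$. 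I would therefore re-run the proof of Lemma \ref{lem:block} for this $f$ and these weights: decompose as in \eqref{eq:block0} into a fluctuation term $\hat J_i-J(\bar\eta)$ around the canonical density $\bar\eta$ of the surrounding block of $\sim 2k$ sites, plus a Lipschitz term $J(\bar\eta)-J(\hat\eta_i)$.

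For the fluctuation term I would apply the relative entropy inequality together with the logarithmic Sobolev inequality of \cite{Yau97} on each canonical hyperplane $\Omega_{k,\rho_*}$, reducing matters to an exponential moment estimate of the form \eqref{eq:exp}, but now for the triangular-weighted sum $\hat{\mathcal F}:=\sum_{i'}w_{i'}[J_{i-i',i-i'+1}-J(\rho)]$ rather than a flat average. Splitting $\{J_{j,j+1}\}$ into the two parity classes, whose members involve disjoint pairs of sites and hence are independent under $\nu_\rho$, and using Hoeffding's lemma with the generalized \Hol inequality, the sub-Gaussian variance proxy is governed by $\sum_{i'}w_{i'}^2$; since $\max_{i'}w_{i'}=1/k$ and $\sum_{i'}w_{i'}^2=\sum_{i'}(k-|i'|)^2k^{-4}\asymp 1/k$, one obtains $\log\sum_{\eta\in\Omega_{k,\rho_*}}e^{a\hat{\mathcal F}^2}\nu^k(\eta\,|\,\rho_*)\le C$ for all $a\lesssim k$, exactly as in \eqref{eq:exp}. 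The passage from $\nu^k(\cdot\,|\,\rho_*)$ to $\nu_\rho$ is the same elementary comparison as before. This yields a bound $C[k^2\fD_n(\mu)+n/k]$ for the fluctuation term summed over $i$. The Lipschitz term is controlled by $|J(\bar\eta)-J(\hat\eta_i)|\le C|\bar\eta-\hat\eta_i|$ followed by the same estimate applied to $f=\eta_1$, $\langle f\rangle=\mathrm{id}$, contributing a bound of the same order.

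Finally I would take $\mu=\mu_{n,t}$, integrate over $t\in[0,T]$, and invoke Lemma \ref{lem:dir} in its sharp form $\int_0^T\fD_n(\mu_{n,t})\,dt\le C/(\sigma n^\kappa)$, which follows from \eqref{eq:d1} since $H(\mu_{n,0})=O(n)$, to turn $k^2\int_0^T\fD_n\,dt$ into $k^2/(\sigma n^\kappa)$ and the $n/k$ term into $Tn/k$, producing the claimed $C(k^2/(\sigma n^\kappa)+n/k)$. The only genuinely new ingredient relative to Lemma \ref{lem:block} is the sub-Gaussian concentration for the non-uniform (triangular) weighting together with the canonical-to-grand-canonical passage for the weighted current; because the weights are uniformly $O(1/k)$ and $\ell=2$, this is a routine adaptation, and I expect it to be the main, though not severe, obstacle.
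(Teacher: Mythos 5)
Your proposal is correct and follows essentially the same route as the paper: the paper's proof also takes $f(\eta_1,\eta_2)=\eta_1(1-\eta_2)$, observes that $\hat J_i$ is exactly the triangle-weighted average of translates of $f$ with $\langle f\rangle = J$, repeats the proof of Lemma \ref{lem:block} with the uniform average replaced by this weighted one, and concludes by taking $\mu=\mu_{n,t}$, integrating in time and invoking Lemma \ref{lem:dir}. The details you supply (parity splitting for independence, the $\sum_{i'} w_{i'}^2\asymp 1/k$ variance proxy in the Hoeffding step, and the $\sigma$-explicit form of Lemma \ref{lem:dir}) are precisely the routine adaptations the paper leaves implicit.
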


\begin{prop}[$H^1$ estimate]
\label{prop:h1}
There is some constant $C$, such that 
\begin{align}
  \bE_n \left[ \int_0^T \sum_{i=k+1}^{n-k} \big(\nabla\hat\eta_i\big)^2dt \right] \le C \left( \frac1{\sigma n^\kappa} + \frac n{k^3} \right). 
\end{align}
\end{prop}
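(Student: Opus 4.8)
The plan is to bound the space--time sum pointwise in $t$ by the Dirichlet form $\fD_n(\mu_{n,t})$ and then integrate in time via Lemma \ref{lem:dir}. Everything rests on the elementary identity obtained from \eqref{eq:hateta} by shifting the summation index in $\hat\eta_{i+1}=\sum_j w_{i+1-j}\eta_j=\sum_j w_{i-j}\eta_{j+1}$, namely
\[
  \nabla\hat\eta_i=\sum_j w_{i-j}\,(\eta_{j+1}-\eta_j),
\]
with $w$ extended by $0$ outside $\{-k+1,\dots,k-1\}$. Thus $\nabla\hat\eta_i$ is a $w$-weighted superposition of exactly the nearest-neighbour gradients $\eta_{j+1}-\eta_j$ that appear in $\fD_n$, and for a probability measure $\mu$ on $\Omega_n$ one has $E_\mu[(\nabla\hat\eta_i)^2]=\sum_{j,j'}w_{i-j}w_{i-j'}E_\mu[(\eta_{j+1}-\eta_j)(\eta_{j'+1}-\eta_{j'})]$.

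The first step is a discrete integration by parts in configuration space. For each bond $(j,j+1)$ the factor $\eta_{j+1}-\eta_j$ is antisymmetric under the exchange $\eta\mapsto\eta^{j,j+1}$; averaging each correlation over this involution splits it into a local part supported on $|j-j'|\le1$ and a part carrying $\mu(\eta)-\mu(\eta^{j,j+1})$. Once weighted and summed in $i$, the local part collapses --- using $\Delta w_0=-2/k^2$ together with $w_{\pm k}=0$ --- to $\sum_i k^{-3}E_\mu[(\eta_{i+1}-\eta_i)^2]$, which is at most $Cn/k^3$. This is the announced main term; it coincides with the equilibrium variance of the difference of two adjacent blocks of length $k$, which is the genuine order of magnitude of $\nabla\hat\eta_i$.

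The remaining error is $\tfrac12\sum_i\sum_j w_{i-j}\sum_\eta(\eta_{j+1}-\eta_j)\,(\nabla\hat\eta_i)\,[\mu(\eta)-\mu(\eta^{j,j+1})]$; here the extra contribution created by re-centring $\nabla\hat\eta_i$ at $\eta^{j,j+1}$ drops out, since $(\eta_{j+1}-\eta_j)^2$ is invariant under the swap and hence orthogonal to $\mu-\mu(\eta^{j,j+1})$. Factorising $\mu(\eta)-\mu(\eta^{j,j+1})=(\sqrt{\mu(\eta)}-\sqrt{\mu(\eta^{j,j+1})})(\sqrt{\mu(\eta)}+\sqrt{\mu(\eta^{j,j+1})})$ and applying \CS bond by bond bounds the error by a sum of $\sqrt{\fD_n^{(j)}}$ (the $j$-th summand of $\fD_n$) against a factor that still contains $\nabla\hat\eta_i$.

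The main obstacle is to show that this error costs only an $O(1)$ multiple of $\fD_n$, not a growing one: the crude bound replacing $\nabla\hat\eta_i$ by $\|\nabla\hat\eta_i\|_\infty\le k^{-1}$ produces $k^{-1}\sqrt{n\,\fD_n}$, whose time integral exceeds the target by a power of $n$ and is useless. Instead, writing $A:=E_\mu[\sum_i(\nabla\hat\eta_i)^2]$, I keep the $\nabla\hat\eta_i$ factor and use $\sum_i w_{i-j}=1$ together with \CS in $i$ against the weights to get $|\text{error}|\le C\sqrt{A\,\fD_n(\mu)}$. Young's inequality then absorbs $\tfrac12 A$ into the left-hand side, giving the self-improving pointwise estimate $A\le C\,(n/k^3+\fD_n(\mu))$ with an absolute constant. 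Taking $\mu=\mu_{n,t}$, integrating over $[0,T]$ and inserting $\int_0^T\fD_n(\mu_{n,t})\,dt\le C/(\sigma n^\kappa)$ from Lemma \ref{lem:dir} yields the claim. In contrast to the one-block estimate, no logarithmic Sobolev inequality enters, which is precisely why the Dirichlet-form coefficient here is $O(1)$ rather than $O(k^2)$.
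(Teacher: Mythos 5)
Your argument is correct, but it follows a genuinely different route from the paper's. The paper's proof is a two-line reduction: it observes $\nabla\hat\eta_i=k^{-1}(\bar\eta_{i+k}-\bar\eta_i)$ and applies to this two-block difference the same machinery as in Lemma \ref{lem:block} and Proposition \ref{prop:one-block} (relative entropy, the logarithmic Sobolev inequality on the hyperplanes $\Omega_{k,\rho_*}$, exponential moment bounds under the conditioned uniform measure), obtaining $\bE_n[\int_0^T\sum_i(\bar\eta_{i+k}-\bar\eta_i)^2\,dt]\le C(k^2/(\sigma n^\kappa)+n/k)$, and then divides by $k^2$; the LSI factor $k^2$ is exactly cancelled by this division, so both routes arrive at the same pointwise-in-time estimate $E_\mu[\sum_i(\nabla\hat\eta_i)^2]\le C(n/k^3+\fD_n(\mu))$ and the same time integration via Lemma \ref{lem:dir}. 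You instead exploit the linear representation $\nabla\hat\eta_i=\sum_j w_{i-j}(\eta_{j+1}-\eta_j)$ and work directly: bond-swap symmetrization, the fact that $\Delta w_{i'}$ is supported on $i'\in\{0,\pm k\}$ while $w_{\pm k}=0$, the exact cancellation of the re-centring term, Cauchy--Schwarz against $\fD_n$, and absorption by Young's inequality. This is elementary and self-contained (no LSI, no ensemble comparison), and the absorption step is indeed necessary: as you note, the crude bound $|\nabla\hat\eta_i|\le k^{-1}$ is insufficient for the range of $\kappa'$ allowed in \eqref{eq:assp-k}. What the paper's route buys is brevity and uniformity of method, since the block machinery is already in place and works for arbitrary local observables; what yours buys is the insight that for this particular quadratic functional the bare Dirichlet form suffices. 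Two minor bookkeeping points: in the Cauchy--Schwarz step the factor $\sum_\eta G_j(\eta)^2\mu(\eta^{j,j+1})$, with $G_j:=\sum_i w_{i-j}\nabla\hat\eta_i$, must be converted back to $\sum_\eta G_j(\eta)^2\mu(\eta)$ by one more swap, which produces an additional error of order $\sqrt{\fD_n(\mu)\,n/k^6}$, harmless after Young's inequality; and your closing contrast slightly overstates the difference, since after the division by $k^2$ the paper's final Dirichlet-form coefficient is also $O(1)$ --- the $O(k^2)$ coefficient appears only for the intermediate two-block quantity.
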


\begin{proof}[Proof of Lemma \ref{lem:esti-y}]
For $\psi\in\cC_c^\infty(\Sigma_T)$, define
\begin{align}
\label{eq:def-a1}
  \cA_{n,1}(\psi) &:= p\int_0^T \sum_{i=k+1}^{n-k} \bar\psi_i\left[\hat J_i-J\big(\hat\eta_i\big) \right]\nabla f'(\hat\eta_i)dt,\\
\label{eq:def-s1}
  \cS_{n,1}(\psi) &:= -\sigma n^\kappa\int_0^T \sum_{i=k+1}^{n-k} \bar\psi_i\nabla\hat\eta_i\nabla f'(\hat\eta_i)dt.
\end{align}
In view of \eqref{eq:ent-prod-decom}, $Y_n=\mathcal M_n + (\cA_n-\cA_{n,1}) + (\cS_n-\cS_{n,1}) + \mathcal E_{n,1} + \mathcal E_{n,3}$.

We first look at $\cA_n-\cA_{n,1}$.
Write $g_i:=\hat J_i-J(\hat\eta_i)$ and fix $p=1$ without loss of generality.
Performing summation by part, $\cA_n(\psi)-\cA_{n,1}(\psi)$ is equal to\footnote{Though $\psi$ is compactly supported, the boundary terms cannot be omitted autonomously, since we need to take supreme over all $\psi$ before send $n\to\infty$.}
\begin{align}
\label{eq:y1}
  \int_0^T \sum_{i=k+1}^{n-k} f'\big(\hat\eta_{i+1}\big)g_i\nabla\bar\psi_i\,dt - \int_0^T \left( \bar\psi_{i+1}f'\big(\hat\eta_{i+1}\big)g_i\Big|_{i=k}^{n-k} \right) dt.
\end{align}
Notice that for compactly supported $\psi$,
\begin{align}
\label{eq:esti-bd}
  \left| \int_0^T \bar\psi_{k+1}\,dt \right| = \left| \int_0^T \!\! \int_0^{\frac kn+\frac1{2n}} \partial_x\psi\,dx\,dt \right| \le C\sqrt{\frac kn}\|\psi\|_{H_0^1}.
\end{align}
Similar estimate holds for $\bar\psi_{n-k+1}$.
Hence, the second term in \eqref{eq:y1} satisfies the condition.
For the first term, \CS inequality yields that
\begin{align}
  \left| \int_0^T \sum_{i=k+1}^{n-k} f'\big(\hat\eta_{i+1}\big)g_i\nabla\bar\psi_i\,dt \right| \le C|f'|_\infty\bigg(\frac1n\int_0^T \sum_{i=k+1}^{n-k} g_i^2dt\bigg)^{\frac12}\|\psi\|_{H_0^1}.
\end{align}
Applying Proposition \ref{prop:one-block}, it is bounded from above by $a_{n,1}\|\psi\|_{H_0^1}$ and
\begin{align}
  \bE_n [a_{n,1}] \le \frac{C'}{\sqrt n}\sqrt{\frac{k^2}{n^\kappa}+\frac nk} = C'\sqrt{\frac{k^2}{n^{1+\kappa}}+\frac1k}.
\end{align}

The term $\cS_n-\cS_{n,1}$ is treated similarly.
Without loss of generality, we fix $\sigma=1$.
Write $\cS_n(\psi)-\cS_{n,1}(\psi)$ as
\begin{align}
  n^\kappa\int_0^T \left( \bar\psi_{i+1}f'\big(\hat\eta_{i+1}\big)\nabla\hat\eta_i\Big|_{i=k}^{n-k} \right) dt - n^\kappa\int_0^T \sum_{i=k+1}^{n-k} f'\big(\hat\eta_{i+1}\big)\nabla\hat\eta_i\nabla\bar\psi_i\,dt.
\end{align}
Since $|\nabla\hat\eta_i| \le k^{-1}$ and \eqref{eq:esti-bd}, the first term is bounded by $Cn^{\kappa-\frac12}k^{-\frac12}\|\psi\|_{H_0^1}$.
By \CS inequality and Proposition \ref{prop:h1}, the second term is bounded by $a_{n,2}\|\psi\|_{H_0^1}$ and
\begin{align}
  \bE_n [a_{n,2}] \le \frac{Cn^\kappa}{\sqrt n}\sqrt{\frac1{n^\kappa}+\frac n{k^3}} = C\sqrt{\frac1{n^{1-\kappa}}+\frac{n^{2\kappa}}{k^3}}.
\end{align}

Next, apply summation by part on $\mathcal E_{n,1}$ in \eqref{eq:def-e1} to get
\begin{equation}
  \begin{aligned}
    \mathcal E_{n,1}(\psi) = \int_0^T \sum_{i=k+1}^{n-k} (\bar\psi_i-\psi_i)\nabla^*q\big(\hat\eta_i\big)dt - \int_0^T \left( \psi_{i+1}q\big(\hat\eta_i\big)\Big|_{i=k}^{n-k} \right) dt.
  \end{aligned}
\end{equation}
The boundary integral is estimated by \eqref{eq:esti-bd} as before.
For the remaining term, notice that $|\nabla^*q(\hat\eta_i)| \le |q'|_\infty|\nabla^*\hat\eta_i|$ and $|\nabla^*\hat\eta_i| \le k^{-1}$.
\CS inequality then yields that this term is bounded from above by
\begin{equation}
  \begin{aligned}
    \frac{p|q'|_\infty T\sqrt n}k \bigg[\int_0^T \sum_{i=k+1}^{n-k} (\bar\psi_i-\psi_i)^2dt\bigg]^{\frac12} \le \frac Ck\|\psi\|_{H_0^1}.
  \end{aligned}
\end{equation}
For $\mathcal E_{n,3}$ in \eqref{eq:def-e3}, we similarly have
\begin{align}
  \big|\mathcal E_{n,3}(\psi)\big| \le \frac{2p|q(0)|\sqrt{Tk}}{\sqrt n} \left[ \iint_{\Sigma_T} (\partial_x\psi)^2dx\,dt \right]^\frac12 \le C\sqrt{\frac kn}\|\psi\|_{H_0^1}.
\end{align}

Finally, we check the martingale $\mathcal M_n$.
By \eqref{eq:def-m},
\begin{align}
  \big|\mathcal M_n(\psi)\big|^2 &\le C\|\psi\|_{H_0^1}^2\int_0^T \frac1n\sum_{i=k+1}^{n-k} \big|M_i^f(t)\big|^2dt.
\end{align}
Doob's inequality then yields that $|\mathcal M_n(\psi)| \le a_{n,3}\|\psi\|_{H_{0,1}}$, where
\begin{align}
  \bE_n \big[|a_{n,3}|^2\big] &\le \frac{C'}n\sum_{i=k+1}^{n-k} \bE_n \left[ \int_0^T \big\langle M_i^f \big\rangle(t)dt \right],
\end{align}
where the quadratic variation $\langle M_i^f \rangle$ reads
\begin{equation}
\label{eq:martingale1}
  \begin{aligned}
    \big\langle M_i^f \big\rangle(t) &= \int_0^t \Big\{L_{n,s} \big[f(\hat\eta_i)^2\big]-2f(\hat\eta_i)L_{n,s} \big[f(\hat\eta_i)\big]\Big\}ds\\
    &=n\int_0^t \sum_{i'=1}^{n-1} (p\eta_{i'}+\sigma n^\kappa) \left[ f\big(\hat\eta_i^{i',i'+1}\big) - f\big(\hat\eta_i\big) \right]^2ds.
  \end{aligned}
\end{equation}
Recall that $\hat\eta_i=\hat\eta_{i,k}$ in \eqref{eq:hateta}.
Direct computation shows that
\begin{equation}
\label{eq:exchange-smooth}
  \hat\eta_i^{j,j+1} = \hat\eta_i - \sgn\left( i-j-\frac12 \right)\frac{\nabla\eta_j}{k^2}, \quad j-k+1 \le i \le j+k
\end{equation}
and otherwise $\hat\eta_i^{j,j+1}-\hat\eta_i=0$.
Therefore,
\begin{align}
  \bE_n \big[a_{n,3}^2\big] \le Cn^{1+\kappa}\int_0^T \!\! \int_0^t \sum_{i'=1}^{n-1} \left( \hat\eta_i^{i',i'+1} - \hat\eta_i \right)^2ds\,dt \le \frac{C'n^{1+\kappa}}{k^3}.
\end{align}
In view of \eqref{eq:assp-k}, we choose $k$ such that all the upper bounds above vanish when $n\to\infty$.
The proof is then completed.
\end{proof}

\begin{proof}[Proof of Lemma \ref{lem:esti-z}]
Recall that $Z_n = \cA_{n,1} + \cS_{n,1} + \mathcal E_{n,2}$.
Similarly to Lemma \ref{lem:esti-y}, we prove for each term in $Z_n$.

Recall the definition \eqref{eq:def-a1} of $\cA_{n,1}$ and write $g_i=\hat J_i-J(\hat\eta_i)$ as before.
Since $|\nabla f'(\hat\eta_i)|\le|f''|_\infty|\nabla\eta_i|$, we have $|\cA_{n,1}(\psi)| \le b_{n,1}\|\psi\|_{L^\infty}$, where
\begin{align}
  b_{n,1} := C\int_0^T \sum_{i=k+1}^{n-k} \big|g_i\nabla\hat\eta_i\big|dt.
\end{align}
Splitting $\bE_n [b_{n,1}]$ into the product of two expectations by \CS inequality and applying Proposition \ref{prop:one-block}, \ref{prop:h1} respectively,
\begin{align}
    \bE_n [b_{n,1}] \le C'\sqrt{\frac{k^2}{n^\kappa}+\frac nk}\sqrt{\frac1{n^\kappa}+\frac n{k^3}} \le C'' \left( \frac k{n^\kappa}+\frac n{k^2} \right).
\end{align}

For $\cS_{n,1}$ in \eqref{eq:def-s1}, similarly we have $|\cS_{n,1}(\psi)| \le b_{n,2}\|\psi\|_{L^\infty}$, where
\begin{align}
  b_{n,2} := Cn^\kappa\int_0^T \sum_{i=k+1}^{n-k} \big(\nabla\hat\eta_i\big)^2dt.
\end{align}
According to Proposition \ref{prop:h1},
\begin{align}
  \bE_n [b_{n,2}] \le C'n^\kappa \left( \frac1{n^\kappa} + \frac n{k^3} \right) = C'' \left( 1+\frac{n^{1+\kappa}}{k^3} \right).
\end{align}

We are left with $\mathcal E_{n,2}$ in \eqref{eq:def-e2}.
From \eqref{eq:corrections} and the definition \eqref{eq:generator} of $L_{n,t}$,
\begin{align}
  \ep_i = \sum_{j =i-k}^{i+k-1} \big(p\eta_j+\sigma n^\kappa\big) \left[ f(\hat\eta_i^{j,j+1}) - f(\hat\eta_i) - f'(\hat\eta_i) \left( \hat\eta_i^{j,j+1} - \hat\eta_i \right) \right].
\end{align}
Thanks to \eqref{eq:exchange-smooth} and the mean value theorem, $|\ep_i| \le Cn^\kappa k^{-3}$.
For $\ep_i^*$, it follows from the relation $f'J'=q'$ that $|\ep_i^*| \le Ck^{-2}$.
Hence,
\begin{align}
  |\mathcal E_{n,2}(\psi)| \le C \left( \frac{n^{1+\kappa}}{k^3}+\frac n{k^2} \right) \|\psi\|_{L^\infty}.
\end{align}
The conclusion then follows from our choice of $k$ in \eqref{eq:assp-k}.
\end{proof}

\begin{rem}
\label{rem:ent-prod}
It is clear that $\cS_{n,1}$ is the only term that survives in the limit $n\to\infty$.
It is the microscopic origin of the non-zero entropy production appeared in \eqref{eq:ent-sol-1}.
\end{rem}

We close this section with the proofs of the block estimates.

\begin{proof}[Proof of Proposition \ref{prop:one-block}]
Recall that $\hat J_i=\hat J_{i,k}$ is defined in \eqref{eq:current}.
Take $f(\eta_1,\eta_2)=\eta_1(1-\eta_2)$, then $J_{i,i+1}=f(\tau_{i-1}\eta)$.
Define weighted average
\begin{align}
  \hat f_{i,k} := \sum_{i'=-k}^{k-2} \frac{k-|i'+1|}{k^2}f(\tau_{i+i'}\eta) = \hat J_i.
\end{align}
We only need to repeat the proof of Lemma \ref{lem:block} with the uniform average $f_{i,k}$ replaced by $\hat f_{i,k}$ to get for any probability measure $\mu$ on $\Omega_n$ that,
\begin{align}
  \sum_{\eta\in\Omega_n} \sum_{i=k}^{n-k} \big|\hat J_i-J\big(\hat\eta_i\big)\big|^2\mu(\eta) \le C \left[ k^2\fD_n(\mu)+\frac nk \right].
\end{align}
Taking $\mu=\mu_{n,t}$ and integrating in time, we can conclude from Lemma \ref{lem:dir}.
\end{proof}

\begin{proof}[Proof of Proposition \ref{prop:h1}]
Observe that $\nabla\hat\eta_i = k^{-1}(\bar\eta_{i+k}-\bar\eta_i)$.
Applying the same argument used in the previous proof, we have
\begin{align}
  \bE_n \left[ \int_0^T \sum_{i=k+1}^{n-k} \big(\bar\eta_{i+k}-\bar\eta_i\big)^2dt \right] \le C \left( \frac{k^2}{n^\kappa} + \frac nk \right).
\end{align}
The estimate then follows from dividing the formula above by $k^2$.
\end{proof}

\section{Entropy inequality and boundary traces}
\label{sec:ent-sol}

In Proposition \ref{prop:hl1}, for each $T>0$ we obtain a random function $\rho \in L^\infty(\Sigma_T)$ with distribution $\fQ$ such that Corollary \ref{cor:hl2} holds.
In this section, we check conditions (\romannum1)--(\romannum3) in Definition \ref{defn:ent-sol} for $\fQ$-almost every $\rho$.

We begin with the entropy inequality \eqref{eq:ent-sol-1}.
It is a direct conclusion of the computation in the previous section.
Indeed, as mentioned in Remark \ref{rem:ent-prod}, all terms in the entropy production vanish except $\cS_{n,1}$ in \eqref{eq:def-s1}.
Recall that $f$ is convex, then $(u_1-u_2)[f'(u_1)-f'(u_2)]\ge0$ for all $u_1$, $u_2\in[0,1]$.
Therefore, $\cS_{n,1}(\psi)\le0$ if $\psi\ge0$, which implies that
\begin{align}
  \lim_{n\to\infty} \fQ_n \left\{ X_n^{f,q}(\psi) \le 0 \right\}=1, \quad \forall (f,q)\in\mathscr S, \ \psi\in\cC_c^\infty(\Sigma_T), \ \psi\ge0.
\end{align}
The weak convergence $\fQ_n\Rightarrow\fQ$ then shows that
\begin{align}
  \fQ \left\{ \iint_{\Sigma_T} \big[f(\rho)\partial_t\psi+pq(\rho)\partial_x\psi\big]dx\,dt \ge 0 \right\} = 1,
\end{align}
for any fixed $\psi\ge0$ and $(f,q)$.
From Remark \ref{rem:ent-sol} and that $\cC_c^\infty(\Sigma_T)$ is separable, we have the next lemma.

\begin{lem}
\label{lem:ent-ineq}
Let $\mathscr S$ be the set of all Lax entropy flux pair, then
\begin{equation}
  \fQ \left\{ \iint_{\Sigma_T} \big[f(\rho)\partial_t\psi+pq(\rho)\partial_x\psi\big]dx\,dt \ge 0
  \ \bigg| \ 
  \begin{aligned}
    &\forall\,(f,q)\in\mathscr S\\
    &\forall\,\psi\in\cC_c^\infty(\Sigma_T)
  \end{aligned}
  \right\} = 1.
\end{equation}
\end{lem}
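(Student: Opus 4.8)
The analytic heart of the statement is already settled by the entropy-production computation preceding the lemma, which shows that for each \emph{fixed} Lax entropy flux pair $(f,q)\in\mathscr S$ and each \emph{fixed} nonnegative $\psi\in\cC_c^\infty(\Sigma_T)$ one has $\fQ\{\iint_{\Sigma_T}[f(\rho)\partial_t\psi+pq(\rho)\partial_x\psi]\,dx\,dt\ge0\}=1$. What remains is the measure-theoretic upgrade: to produce a \emph{single} event of full $\fQ$-measure on which this inequality holds simultaneously for all $(f,q)$ and all nonnegative $\psi$ (which is exactly the distributional entropy inequality \eqref{eq:ent-sol-1}). The plan is to replace both uncountable families by countable ones, intersect the resulting countably many full-measure events, and then extend back by continuity and convexity.

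First I would fix a countable family of entropy pairs. By Remark~\ref{rem:ent-sol}, controlling the countable collection $\{(F_{m,c},Q_{m,c}):m\in\bN,\,c\in\bQ\}$ is enough, since the entropy inequality for this family propagates to all $(F_c,Q_c)$, $c\in\bR$, and then, through the convex-hull argument of that remark, to every Lax entropy flux pair. Next I would invoke the separability of $\cC_c^\infty(\Sigma_T)$ to fix a countable set $\{\psi_j\}$ of nonnegative functions that is dense, in the $\cC^1$ norm, in the cone of nonnegative smooth compactly supported functions. Applying the per-pair statement to each of the countably many triples $(F_{m,c},Q_{m,c},\psi_j)$ and intersecting the associated full-measure events produces one event $A$ with $\fQ(A)=1$ on which $\iint_{\Sigma_T}[F_{m,c}(\rho)\partial_t\psi_j+pQ_{m,c}(\rho)\partial_x\psi_j]\,dx\,dt\ge0$ for all $m$, $c$, $j$.

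On the event $A$ I would then extend from the countable data to the full families. For the test functions, observe that since $\rho$ is fixed and bounded, the functional $\phi\mapsto\iint_{\Sigma_T}[F_{m,c}(\rho)\partial_t\phi+pQ_{m,c}(\rho)\partial_x\phi]\,dx\,dt$ is bounded in the $\cC^1$ norm and hence continuous; approximating an arbitrary nonnegative $\psi$ by $\psi_j\to\psi$ in $\cC^1$ lets the inequality pass to the limit, giving it for every nonnegative $\psi\in\cC_c^\infty(\Sigma_T)$. A final appeal to Remark~\ref{rem:ent-sol} upgrades from $\{(F_{m,c},Q_{m,c})\}$ to all of $\mathscr S$, still on $A$, which is the claim.

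I expect the only delicate point to be this continuity/density step: one must arrange the countable approximants $\psi_j$ to be nonnegative and to converge in a topology strong enough to control both the $\partial_t\psi$ and the $\partial_x\psi$ contributions, so that the sign of the integral is preserved in the limit. The reduction to countable families and the convex-hull propagation, by contrast, are routine once Remark~\ref{rem:ent-sol} and the separability of $\cC_c^\infty(\Sigma_T)$ are in hand.
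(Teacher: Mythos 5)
Your proposal is correct and follows essentially the same route as the paper: the per-pair, per-$\psi$ inequality comes from the entropy-production decomposition ($\cS_{n,1}(\psi)\le0$ by convexity of $f$, all other terms vanishing, plus weak convergence $\fQ_n\Rightarrow\fQ$), and the paper then concludes exactly as you do, invoking Remark \ref{rem:ent-sol} for the countable entropy family and the separability of $\cC_c^\infty(\Sigma_T)$ for the test functions. Your write-up merely makes explicit the countable-intersection and $\cC^1$-continuity details that the paper compresses into its final sentence, and your handling of the nonnegative cone (separability of a metric subspace) resolves the one delicate point correctly.
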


By Lemma \ref{lem:ent-ineq} and Proposition \ref{prop:bd-trace}, $\rho$ possesses traces at $t=0$, $x=0$ and $x=1$ as in \eqref{eq:bd-traces1}, $\fQ$-almost surely.
This allows us to transfer the initial condition in \eqref{eq:ent-sol-2} and boundary conditions in \eqref{eq:ent-sol-3} into the corresponding weak forms.

\begin{lem}
\label{lem:weak-sol}
For any $\psi\in\cC_c^\infty((-\infty,T)\times(0,1))$, we have $\fQ$-almost surely that
\begin{align}
  \int_0^1 v_0\psi(0,\cdot)dx + \iint_{\Sigma_T} \big[\rho\partial_t\psi + pJ(\rho)\partial_x\psi\big]dx\,dt = 0.
\end{align}
\end{lem}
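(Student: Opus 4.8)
The plan is to specialise the entropy-production decomposition \eqref{eq:ent-prod-decom} to the linear Lax entropy pair $(f,q)=(\mathrm{id},J)$ and to retain the initial-data term that was discarded in Lemma~\ref{lem:com-com}. For this pair $f''=0$, so $\nabla f'(\hat\eta_i)\equiv0$ and the correctors $\ep_i,\ep_i^*$ of \eqref{eq:corrections} vanish; consequently $\cA_{n,1}=\cS_{n,1}=\mathcal E_{n,2}=0$, the term $Z_n$ disappears, and \eqref{eq:ent-prod-decom} reduces to
\[
  X_n^{\mathrm{id},J}(\psi)=\frac1n\sum_{i=k+1}^{n-k}\hat\eta_i(0)\,\bar\psi_i(0)+R_n(\psi),\qquad R_n:=\mathcal M_n+\cA_n+\cS_n+\mathcal E_{n,1}+\mathcal E_{n,3}.
\]
Because the test functions $\psi\in\cC_c^\infty((-\infty,T)\times(0,1))$ still vanish near $x=0$, $x=1$ and near $t=T$ but are now allowed to be non-zero at $t=0$, the initial term no longer drops out, and it is exactly the source of the term $\int_0^1 v_0\psi(0,\cdot)\,dx$.

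Two estimates then close the argument. First, each summand of $R_n$ is bounded in the proof of Lemma~\ref{lem:esti-y} by $a_{n,j}\|\psi\|_{H_0^1}$ with $\bE_n[a_{n,j}]\to0$; those bounds use only that $\psi$ vanishes near the spatial boundary (through \eqref{eq:esti-bd}) and near $t=T$, so they apply verbatim here and give $\bE_n[|R_n(\psi)|]\to0$. Second, replacing $\bar\psi_i(0)$ by $\psi(0,i/n)$ and $\hat\eta_i(0)$ by $\eta_i(0)$ costs an error of order $k/n\to0$ together with a modulus-of-continuity error, after which \eqref{eq:assp-initial} yields convergence in $\bP_n$-probability of the initial term to the deterministic constant $c_\psi:=\int_0^1 v_0(x)\psi(0,x)\,dx$. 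Combining the two, $X_n^{\mathrm{id},J}(\psi)\to c_\psi$ in $\bP_n$-probability for each fixed $\psi$.

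It remains to promote this to a $\fQ$-almost-sure statement. Regard $X_n^{\mathrm{id},J}(\psi)$ as the value at the empirical Young measure $\nu^n$ of the functional $\Phi(\nu):=-\iint_{\Sigma_T}\big(\partial_t\psi\int y\,d\nu_{t,x}+p\,\partial_x\psi\int J(y)\,d\nu_{t,x}\big)dx\,dt$, which is bounded and continuous on $\mathcal Y$ since $\partial_t\psi,\partial_x\psi\in L^1(\Sigma_T)$. The law of $X_n^{\mathrm{id},J}(\psi)$ is then $\Phi_*\fQ_n$, which converges weakly to $\Phi_*\fQ$ by the continuous mapping theorem; on the other hand the previous step identifies the weak limit as $\delta_{c_\psi}$. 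Uniqueness of the weak limit forces $\Phi_*\fQ=\delta_{c_\psi}$, i.e.\ $\Phi(\nu)=c_\psi$ holds $\fQ$-almost surely. Since $\fQ(\mathcal Y_D)=1$ we have $\nu=\delta_\rho$ and $\Phi(\delta_\rho)=-\iint_{\Sigma_T}(\rho\partial_t\psi+pJ(\rho)\partial_x\psi)\,dx\,dt$, which is the claimed identity. Running this for a countable $\cC^1$-dense family $\{\psi_j\}$ and intersecting the almost-sure events extends it to all $\psi$ by linearity and continuity of both sides.

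The delicate point is precisely this last upgrade from mean to almost-sure equality. It would not suffice to combine the interior weak form \eqref{eq:p2} with the existence of the initial trace $\rho_0$: that route yields only $E^\fQ[\rho_0]=v_0$, i.e.\ control of the mean, whereas the lemma requires $\rho_0=v_0$ $\fQ$-a.s. The microscopic decomposition circumvents this because the limit $c_\psi$ is \emph{deterministic}, so $L^1(\bP_n)$-smallness of $R_n$ together with the probabilistic initial convergence \eqref{eq:assp-initial} upgrades to convergence of $X_n^{\mathrm{id},J}(\psi)$ in probability to a constant, which pins down the full law of the weak limit.
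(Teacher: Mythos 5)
Your proof is correct and follows essentially the same route as the paper: the paper's (very terse) proof is precisely to take $(f,q)=(\mathrm{id},pJ)$ in the decomposition \eqref{eq:ent-prod-decom}, note that linearity of $f$ kills $Z_n$ (in particular $\cS_{n,1}\equiv0$), and identify the surviving initial term via \eqref{eq:assp-initial}. Your write-up merely fills in the details the paper leaves implicit — the verbatim applicability of the Lemma \ref{lem:esti-y} bounds to test functions not vanishing at $t=0$, and the upgrade from convergence in probability to a deterministic constant to a $\fQ$-almost-sure identity via continuity of the functional on $\mathcal Y$ — all of which is sound.
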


\begin{lem}
\label{lem:weak-bd}
For any $\phi\in\cC^1([0,T])$, $\phi\ge0$, we have $\fQ$-almost surely that
\begin{equation}
  \begin{aligned}
    &\lim_{\ve\to0+} \frac1\ve\int_0^T \phi(t) \left[ \int_0^\ve J\big(\rho(t,x)\big)dx \right] dt = 0,\\
    &\lim_{\ve\to0+} \frac1\ve\int_0^T \phi(t) \left[ \int_0^{1-\ve} J\big(\rho(t,x)\big)dx \right] dt = 0.
  \end{aligned}
\end{equation}
\end{lem}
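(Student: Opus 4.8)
The plan is to read the vanishing of the boundary current off the microscopic conservation law, tested against the blocks sitting next to the boundary. I treat $x=0$; the identity at $x=1$ is symmetric. Recall $J_{i,i+1}=\eta_i(1-\eta_{i+1})$ from \eqref{eq:current} and note $\langle J_{1,2}\rangle(\rho)=\rho(1-\rho)=J(\rho)$. Applying Corollary \ref{cor:hl2} with $f=J_{1,2}$ and $\psi(t,x)=\phi(t)\mathbf1_{[0,\ve]}(x)$ turns the macroscopic object of the lemma into a microscopic space-time average:
\begin{align}
  E^\fQ\!\left[\frac1\ve\int_0^T\!\phi(t)\int_0^\ve J(\rho(t,x))\,dx\,dt\right]
  = \lim_{n\to\infty}\bE_n\!\left[\frac1{J_0}\sum_{j=1}^{J_0}\int_0^T\!\phi(t)\,J_{j,j+1}\,dt\right],
  \quad J_0:=\lfloor\ve n\rfloor .
\end{align}
Thus it suffices to bound the right-hand side by $C\ve$ uniformly in $n$, and then to pass to the boundary trace.

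The main input is the microscopic current--density relation coming from Dynkin's formula for the block counts $S_j:=\sum_{i=1}^j\eta_i$. A direct computation from \eqref{eq:generator}, using that $L_{+,t}$ does not see the block and that $L_\ta,L_\ss$ act only through the right edge $j\mapsto j+1$, while $L_{-,t}$ acts only at site $1$, gives
\begin{align}
  \frac1n L_{n,t}[S_j]
  = -p\,J_{j,j+1} + \sigma n^\kappa(\eta_{j+1}-\eta_j) + n^\theta\big[\alpha(t)(1-\eta_1)-\gamma(t)\eta_1\big],
  \quad 1\le j<n .
\end{align}
Summing over $j=1,\dots,J_0$ telescopes the symmetric gradient to $\eta_{J_0+1}-\eta_1$ and collects the reservoir term with a factor $J_0$; writing $\Sigma:=\sum_{j=1}^{J_0}S_j=\sum_{i=1}^{J_0}(J_0-i+1)\eta_i$, applying Dynkin's formula to $\Sigma$, and solving for the current yields
\begin{align}
  \frac p{J_0}\int_0^T\!\phi\sum_{j=1}^{J_0}J_{j,j+1}\,dt
  = \frac{\sigma n^\kappa}{J_0}\int_0^T\!\phi(\eta_{J_0+1}-\eta_1)\,dt
  + n^\theta\!\int_0^T\!\phi\big[\alpha(1-\eta_1)-\gamma\eta_1\big]\,dt + R_n,
\end{align}
where $R_n:=-\tfrac1{nJ_0}\int_0^T\phi\,d\Sigma+\tfrac1{nJ_0}\int_0^T\phi\,dM^\Sigma$ collects the Dynkin boundary term and the martingale. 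The first term on the right is $O(n^{\kappa-1}\ve^{-1})\to0$ since $\kappa<1$; the deterministic part of $R_n$ is $O(J_0/n)=O(\ve)$ because $0\le\Sigma\le J_0^2$; and the martingale part vanishes in $L^2(\bP_n)$ as $n\to\infty$, which I would check from the quadratic-variation bound $\bE_n[\langle M^\Sigma\rangle(T)]\le C(n^{1+\kappa}J_0+n^{1+\theta}J_0^2)$ together with $\kappa<1$, $\theta\le0$.

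The decisive step is the reservoir term, and it is the main obstacle. In case (1) it is $O(n^\theta)$ and disappears as $n\to\infty$, so the right-hand side above is $O(\ve)$. In case (2) we have $\alpha\equiv0$, whence this term equals $-\int_0^T\phi\,\gamma\,\eta_1\,dt\le0$ and \emph{does not vanish}; hence one cannot bound the boundary current term by term. The way out is to exploit its sign together with the positivity $J_{j,j+1}\ge0$: the left-hand side is nonnegative, so taking $\bE_n$ (killing the martingale mean) and then $n\to\infty$ gives $E^\fQ[\tfrac1\ve\int_0^T\phi\int_0^\ve J(\rho)\,dx\,dt]\le C\ve$ in both cases. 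Finally, Proposition \ref{prop:bd-trace} provides the strong trace $\rho_-$, so $\tfrac1\ve\int_0^\ve\rho(\cdot,x)\,dx\to\rho_-$ in $L^1([0,T])$ and, $J$ being Lipschitz, $\tfrac1\ve\int_0^T\phi\int_0^\ve J(\rho)\,dx\,dt\to\int_0^T\phi\,J(\rho_-)\,dt$ as $\ve\to0$, $\fQ$-almost surely; since the integrand is bounded by $\|\phi\|_\infty T/4$, dominated convergence turns the bound above into $E^\fQ[\int_0^T\phi\,J(\rho_-)\,dt]\le0$. As $J\ge0$ and $\phi\ge0$, the integrand is nonnegative $\fQ$-a.s., hence it is zero $\fQ$-a.s.; ranging $\phi$ over a countable dense family of nonnegative $\cC^1$ functions gives $J(\rho_-)=0$ for a.e.\ $t$, $\fQ$-a.s., which is the first identity. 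The statement at $x=1$ is identical, now using $\sum_{i=j+1}^n\eta_i$ and the total-mass balance \eqref{eq:stat2}, the surviving reservoir term in case (2) being $-\int_0^T\phi\,\delta(1-\eta_n)\,dt\le0$.
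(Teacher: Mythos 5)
Your proposal is correct and takes essentially the same route as the paper: Dynkin's formula for the particle count in the block $\{1,\ldots,\lfloor\ve n\rfloor\}$ (the paper telescopes $L_{n,t}[\eta_{i'}]=j_{i'-1,i'}-j_{i',i'+1}$, which is exactly your identity for $L_{n,t}[S_j]$), exploiting the sign of the reservoir term in both cases, bounding the symmetric-part and Dynkin remainders by $O(n^\kappa)+O(\ve^2 n)$, and applying Corollary \ref{cor:hl2} with $\psi_\ve=\ve^{-1}\phi\,\mathbf1_{[0,\ve]}$ to obtain $E^{\fQ}[\,\cdot\,]\le C\ve$. Your explicit passage from this expectation bound to the $\fQ$-almost-sure statement via the strong trace of Proposition \ref{prop:bd-trace} and nonnegativity of $J$ is left implicit in the paper, but it is precisely the intended argument.
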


The proof of Lemma \ref{lem:weak-sol} is straightforward.
By taking $(f,q)=(\mathrm{id},pJ)$ in \eqref{eq:ent-prod-decom}, Lemma \ref{lem:weak-sol} follows from  \eqref{eq:assp-initial} and the fact that $\cS_{n,1}\equiv0$.
Hereafter we focus on Lemma \ref{lem:weak-bd}.

\begin{proof}
Let $j_{i,i+1}$ be the microscopic current determined by the conservation law $L_{n,t} [\eta_i] = j_{i-1,i}-j_{i,i+1}$.
It is easy to see that (cf. \eqref{eq:current})
\begin{align}
\label{eq:current1}
  j_{i,i+1}=
  \begin{cases}
    n^{1+\theta}[\alpha(t)-(\alpha(t)+\gamma(t))\eta_1], &i=0,\\
    pnj_\ta(\tau_{i-1}\eta)-\sigma n^{1+\kappa}\nabla\eta_i, &1 \le i \le n-1,\\
    n^{1+\theta}[(\beta(t)+\delta(t))\eta_n-\delta(t)], &i=n,
  \end{cases}
\end{align}
where $\nabla\eta_i=\eta_i-\eta_{i+1}$ and $j_\ta(\eta):=\eta_1(1-\eta_2)$.

For $\phi\in\cC^1([0,T])$, $\phi\ge0$, consider the Dynkin's martingale
\begin{equation}
  \begin{aligned}
    M_i^\phi(t) :=\,&\sum_{i'=1}^i \left[ \phi(t)\eta_{i'}(t)-\phi(0)\eta_{i'}(0) - \int_0^t \phi'(s)\eta_{i'}(s)ds \right]\\
    &-\int_0^t \phi(s)\sum_{i'=1}^i L_{n,t} [\eta_{i'}(s)]ds, \quad \forall\,t\in[0,T].
  \end{aligned}
\end{equation}
Since $L_{n,t} [\eta_{i'}]=j_{i'-1,i'}-j_{i',i'+1}$ with $j_{i',i'+1}$ in \eqref{eq:current1},
\begin{equation}
  \begin{aligned}
    &\int_0^t \phi(s)j_\ta\big(\tau_{i-1}\eta(s)\big)ds + \frac{\sigma n^\kappa}p\int_0^t \phi(s)\nabla\eta_i(s)ds\\
    =\;&\frac{n^\theta}p\int_0^t \phi(s)\big[\alpha(s)-(\alpha(s)+\gamma(s))\eta_1(s)\big]ds + \frac1{np}M_i^\phi(t) \\
    &- \frac1{np}\sum_{i'=1}^i \left[ \phi(t)\eta_{i'}(t) - \phi(0)\eta_{i'}(0) - \int_0^t \phi'(s)\eta_{i'}(s)ds \right].
  \end{aligned}
\end{equation}
Fix any $\ve\in(0,1)$ and sum up the equation for $i=1$, ..., $[\ve n]$ to get
\begin{equation}
  \begin{aligned}
    &\int_0^t \sum_{i=1}^{[\ve n]} \phi(s)j_\ta(\tau_{i-1}\eta)ds + \frac{\sigma n^\kappa}p\int_0^t \phi(s)(\eta_{[\ve n]+1}-\eta_1)ds\\
    =\;&\frac{n^\theta[\ve n]}p \int_0^t \phi(s)\big[\alpha(s)-(\alpha(s)+\gamma(s))\eta_1(s)\big]ds + \frac1{np}\sum_{i=1}^{[\ve n]} M_i^\phi(t)\\
    &-\frac1{np}\sum_{i=1}^{[\ve n]} \sum_{i'=1}^i \left[ \phi(t)\eta_{i'}(t) - \phi(0)\eta_{i'}(0) - \int_0^t \phi'(s)\eta_{i'}(s)ds \right].
  \end{aligned}
\end{equation}
Take expectation with respect to $\bP_n$ in the formula above.
Since $M_i^\phi(t)$ is mean-zero and the integral of $\phi(s)(\alpha(s)+\gamma(s))\eta_1(s)$ is non-negative,
\begin{align}
  \bE_n \left[ \int_0^T \sum_{i=1}^{[\ve n]} \phi(t)j_\ta(\tau_{i-1}\eta)dt \right] \le C\big(n^\kappa+\ve n^{1+\theta}\|\alpha\|_{L^\infty}+\ve^2n),
\end{align}
where $C$ is a constant depending on $|\phi|_\infty$, $|\phi'|_\infty$ and $T$.
Observe that in both cases of Theorem \ref{thm:slow}, the second term in the right-hand side above is $o(\ve n)$.
Apply Corollary \ref{cor:hl2} with $j_\ta=\eta_1(1-\eta_2)$ and the $L^1$ function $\psi_\ve(t,x)=\ve^{-1}\phi(t)\mathbf1_{[0,\ve]}(x)$,
\begin{equation}
  \begin{aligned}
    &E^\fQ \left[ \frac1\ve\int_0^T \!\! \int_0^\ve \phi(t)J\big(\rho(t,x)\big)dx\,dt \right]\\
    =\,&\lim_{n\to\infty} \bE_n \left[ \int_0^t \frac1{\ve n}\sum_{i=1}^{[\ve n]} \phi(t)j_\ta(\tau_{i-1}\eta)dt \right] \le C\ve,
  \end{aligned}
\end{equation}
where in the last inequality we need $\kappa<1$.
Hence, for any $\phi\in\cC^1([0,T])$,
\begin{align}
  \lim_{\ve\to0+} E^\fQ \left[ \frac1\ve\int_0^T \phi(t)\int_0^\ve J\big(\rho(t,x)\big)dx\,dt \right] = 0.
\end{align}
The other assertion follows similarly.
\end{proof}

\begin{rem}
\label{rem:separ2}
For the generator defined in Remark \ref{rem:separ}, the microscopic currents at the boundaries become
\begin{equation}
  \begin{aligned}
    j'_{0,1}&=n^{1+\theta_{-,1}}\alpha(t)(1-\eta_1)-n^{1+\theta_{-,2}}\gamma(t)\eta_1,\\
    j'_{n,n+1}&=n^{1+\theta_{+,1}}\beta(t)\eta_n-n^{1+\theta_{+,2}}\delta(t)(1-\eta_n).
  \end{aligned}
\end{equation}
Repeating the manipulations, we obtain that
\begin{align}
  \bE_n \left[ \int_0^T \sum_{i=1}^{[\ve n]} \phi(t)j_\ta(\tau_{i-1}\eta)dt \right] \le C\big(n^\kappa+\ve n^{1+\theta_{-,1}}\|\alpha\|_{L^\infty}+\ve^2n).
\end{align}
Since $\theta_{\pm,1}<0$, the remaining argument still works.
\end{rem}


\titleformat{\section}[hang]	
{\bfseries\large}{}{0em}{}[]
\titlespacing*{\section}{0em}{2em}{1.5em}

\section{References}	
\renewcommand{\section}[2]{}
\bibliography{bibliography.bib}



\vspace{2em}
\noindent{\large Lu \textsc{Xu}}

\vspace{0.5em}
\noindent Centre de recherche Inria Lille - Nord Europe\\
40 Avenue Halley, 59650 Villeneuve-d'Ascq, France\\
{\tt lu.xu@inria.fr}

\end{document}